\theoremstyle{plain}
\newtheorem{theorem}{Theorem}[section]
\newtheorem{proposition}[theorem]{Proposition}
\newtheorem{lemma}[theorem]{Lemma}
\newtheorem{corollary}[theorem]{Corollary}
\theoremstyle{definition}
\newtheorem{definition}[theorem]{Definition}
\newtheorem{example}[theorem]{Example}
\newcommand{\Ie}{\textit{I.e.,}\xspace}
\newcommand{\ie}{\textit{i.e.,}\xspace}
\newcommand{\Cf}{\textit{Cf.}\ }
\newcommand{\cf}{\textit{cf.}\ }
\newcommand{\Eg}{\textit{E.g.}\xspace}
\newcommand{\eg}{\textit{e.g.}\xspace}
\newcommand{\R}{\ensuremath{\mathbb R}}		
\newcommand{\T}{\mathbb{T}}
\newcommand{\TP}{\ensuremath{\mathbb {TP}}}	
\newcommand{\eps}{\varepsilon}
\newcommand{\sg}{\sigma}
\newcommand{\mc}{\mathcal}
\newcommand{\tif}{\ensuremath{\text{if }}}
\newcommand{\other}{\ensuremath{\text{otherwise}}}
\newcommand{\tiff}{if and only if\xspace}
\newcommand{\twlog}{without loss of generality\xspace}
\newcommand{\tom}{tropical oriented matroid\xspace}
\newcommand{\toms}{tropical oriented matroids\xspace}
\newcommand{\tphp}{tropical pseudohyperplane\xspace}
\newcommand{\tphps}{tropical pseudohyperplanes\xspace}
\newcommand{\thp}{tropical hyperplane\xspace}
\newcommand{\thps}{tropical hyperplanes\xspace}
\newcommand{\thpas}{\thp arrangements\xspace}
\newcommand{\atphp}{arrangement of \tphps}
\newcommand{\tphpa}{\tphp arrangement\xspace}
\newcommand{\tphpas}{\tphp arrangements\xspace}
\newcommand{\phps}{pseudohyperplanes\xspace}
\newcommand{\mixsd}{mixed subdivision\xspace}		
\newcommand{\mixsds}{mixed subdivisions\xspace}	
\newcommand{\ndmixsd}{\mixsd of $\dilsimp n{d-1}$\xspace}
\newcommand{\dnmixsd}{\ndmixsd}
\newcommand{\dnmixsds}{\mixsds of $\dilsimp n{d-1}$\xspace}
\newcommand{\defn}[2][!*!,!]{\emph{#2}}
\newcommand{\homeo}{\simeq}					
\newcommand{\PLhom}{\overset{\text{\tiny\rm PL}}{\homeo}}	
\newcommand{\coloneq}{\mathrel{\mathop:}=}
\newcommand{\eqcolon}{=\mathrel{\mathop:}}
\newcommand{\simplex}{\triangle}			
\newcommand{\nsimplex}[1]{\simplex^{#1}}	
\DeclareMathOperator{\aff}{aff}				
\DeclareMathOperator{\lin}{lin}				
\DeclareMathOperator{\Sym}{Sym}
\newcommand{\nin}{\not\in}
\newcommand{\inv}[1]{#1^{-1}}				
\DeclareMathOperator{\bnd}{\partial\!}		
\newcommand{\compl}[1]{\overline{#1}}		
\newcommand{\cl}{\overline}					
\DeclareMathOperator{\st}{st}				
\DeclareMathOperator{\lk}{lk}				
\DeclareMathOperator{\card}{\#\!}			
\DeclareMathOperator{\rank}{rank}			
\DeclareMathOperator{\dist}{dist}			
\newcommand{\transpose}[1]{#1^{\mathrm T}}
\newcommand{\BIGOP}[1]{\mathop{\mathchoice%
{\raise-0.22em\hbox{\huge $#1$}}%
{\raise-0.05em\hbox{\Large $#1$}}{\hbox{\large $#1$}}{#1}}}
\newcommand{\disjoint}{\,\dotcup\,}
\newcommand{\dotcup}{\ensuremath{\mathaccent\cdot\cup}}	
\newcommand{\deletion}[2]{#1_{\setminus #2}}	
\newcommand{\contraction}[2]{#1_{/#2}}			
\newcommand{\restr}[2]{#1|_{#2}}				
\newcommand{\compgrop}{%
	\mathrel{\vcenter{\offinterlineskip
	\hbox{C\,}\vskip-1ex\hbox{\,\,G}}}}
\newcommand{\compgr}[2]{\compgrop_{#1,#2}}		
\DeclareMathOperator{\contypeop}{App}
\newcommand{\contypes}[2]{\contypeop_{#1,#2}}	
\newcommand{\shifths}[2]{H_{#1,#2}^+}			
\newcommand{\dilsimp}[2]{{#1}\nsimplex{{#2}}}		
\newcommand{\simpprod}[2]{\nsimplex{#1}\times\nsimplex{#2}}	
\definecolor{tud1b}{RGB}{0,94,168}
\definecolor{tud9c}{rgb}{0.72266,0.05859,0.13281}
\definecolor{tud7b}{RGB}{245,163,0}
\definecolor{tud3a}{RGB}{0,136,119}
\newcommand{\tudOneB}[1]{\textcolor{tud1b}{#1}}
\newcommand{\tudSevenB}[1]{\textcolor{tud7b}{#1}}
\newcommand{\tudNineC}[1]{\textcolor{tud9c}{#1}}
\begin{document}
\author{Silke Horn\\TU Darmstadt\\\url{shorn@opt.tu-darmstadt.de}}
\title{A Topological Representation Theorem\\for Tropical Oriented Matroids: Part II}

\maketitle

\begin{abstract}
Tropical oriented matroids were defined by Ardila and Develin in 2007. They are a tropical analogue of classical oriented matroids in the sense that they encode the properties of the types of points in an arrangement of tropical hyperplanes -- in much the same way as the covectors of (classical) oriented matroids describe the types in arrangements of linear hyperplanes.

Ardila and Develin proved that tropical oriented matroids can be represented as mixed subdivisions of dilated simplices. In this paper we show that this correspondence is a bijection.
Moreover, a tropical analogue for the Topological Representation Theorem for (classical) oriented matroids by Folkman and Lawrence is presented.

\end{abstract}

\section{Introduction} 

Oriented matroids abstract the combinatorial properties of arrangements of real hyperplanes and are ubiquitous in combinatorics. In fact, an arrangement of $n$ (oriented) real hyperplanes in $\R^d$ induces a regular cell decomposition of $\R^d$. Then the covectors of the associated oriented matroid encode the position of the points of $\R^d$ (respectively, the cells in the subdivision) relative to the each of the hyperplanes in the arrangement. It turns out though that there are oriented matroids which cannot be realised by any arrangement of hyperplanes.
The famous Topological Representation Theorem by Folkman and Lawrence \cite{Folkman/Lawrence} (see also \cite{BLSWZ}), however, states that every oriented matroid can  be realised as an arrangement of PL-\emph{pseudo}hyperplanes.

In this paper, we will study \emph{tropical} analogues of oriented matroids.

\bigskip
Tropical geometry is a by now well established subject, see \eg \cite{Ardila/Billey, Ardila/Klivans, Develin/Sturmfels, Mikhalkin06}. It is concerned with the algebraic geometry over the tropical semiring $(\bar\R\coloneq\R\cup\{\infty\},\oplus,\otimes)$, where $\oplus:\bar\R\times\bar\R\to\bar\R:a\oplus b\coloneq \min\{a,b\}$ and $\otimes:\bar\R\times\bar\R\to\bar\R: a\otimes b\coloneq a+b$ are the tropical addition and multiplication. It can be thought of as the image of a field of formal Puisseux series under the valuation map which takes a power series to its smallest exponent.

\medskip
From the combinatorial point of view though a \thp in $\T^{d-1}$ is just the (codimension-$1$-skeleton of the) polar fan of the $(d-1)$-dimensional simplex $\nsimplex{d-1}$. 
For a $(d-2)$-dimensional tropical hyperplane $H$ the $d$ connected components of $\TP^{d-1}\setminus H$ are called the \defn{(open) sectors} of $H$.

An arrangement of $n$ \thps in $\T^{d-1}$ induces a cell decomposition of $\T^{d-1}$ and each cell can be assigned a \defn{type} that describes its position relative to each of the \thps. To be precise, the point $p$ is assigned the type $A=(A_1,\ldots,A_n)$ where $A_i$ denotes the set of closed sectors of the $i$-th \thp in which $p$ is contained. See Figure \ref{fig:thpa} for an illustration in dimension $2$.

\medskip
It turns out that tropical curves -- and as such in particular arrangements of tropical hyperplanes -- have relationships to other interesting objects. Triangulations of products of two simplices are ubiquitous and utile objects in discrete geometry due to their connection with toric Hilbert schemes \cite{Santos05} and Schubert calculus \cite{Ardila/Billey} among others.

By Develin and Sturmfels \cite{Develin/Sturmfels} \emph{regular} subdivisions of $\simpprod{n-1}{d-1}$ are dual to arrangements of $n$ \thps in $\T^{d-1}$. See Figure \ref{fig:mixsd2thp} for an illustration.

\medskip
A central concept is that of an $(n,d)$-type.
\begin{definition}
\label{def:nd_type}
For $n,d\geq1$ an \defn[(n,d)-type]{$(n,d)$-type} is an $n$-tuple $(A_1,\ldots,A_n)$ of non-empty subsets of $[d]$.

For convenience we will write sets like $\{1,2,3\}$ as $123$ throughout this article.
\medskip

An $(n,d)$-type $A$ can be represented as a subgraph $K_A$ of the complete bipartite graph $K_{n,d}$: Denote the vertices of $K_{n,d}$ by $N_1,\ldots,N_n,D_1,\ldots,D_d$. Then the edges of $K_A$ are $\{\{N_i,D_j\}\mid j\in A_i\}$.
\end{definition}

Besides \thp arrangements there are other objects that share the notion of an $(n,d)$-type:
\begin{itemize}

\item If we label the vertices of $\nsimplex {n-1}$ by $1,\ldots, n$, the vertices of the polytope $\simpprod{n-1}{d-1}$ are in canonical bijection with the edges of the complete bipartite graph $K_{n,d}$. Then a cell $C$ in a subdivision of $\simpprod{n-1}{d-1}$ is assigned the type corresponding to the subgraph of $K_{n,d}$ containing all edges that mark vertices of $C$. 
See \eg De Loera, Rambau and Santos \cite{LRS-book} for a thorough treatment of this matter.

\item Given a \dnmixsd, every cell is a Minkowski sum of $n$ faces of $\nsimplex {d-1}$. By identifying the faces of $\nsimplex{d-1}$ with the subsets of $[d]$, this again yields an $(n,d)$-type. See Figure \ref{fig:mixsd2thp_a} for an example. We introduce \mixsds in  Section \ref{sec:mixsds}.

\item Tropical oriented matroids as defined by Ardila and Develin \cite{Ardila/Develin} via a set of covector axioms generalise \thpas. We define them in Section \ref{sec:toms}.
\end{itemize}

\begin{figure}[b]
\centering
\subfigure[A (regular) \mixsd of $\dilsimp22$.]
{\label{fig:mixsd2thp_a}\includegraphics[width=3.5cm]{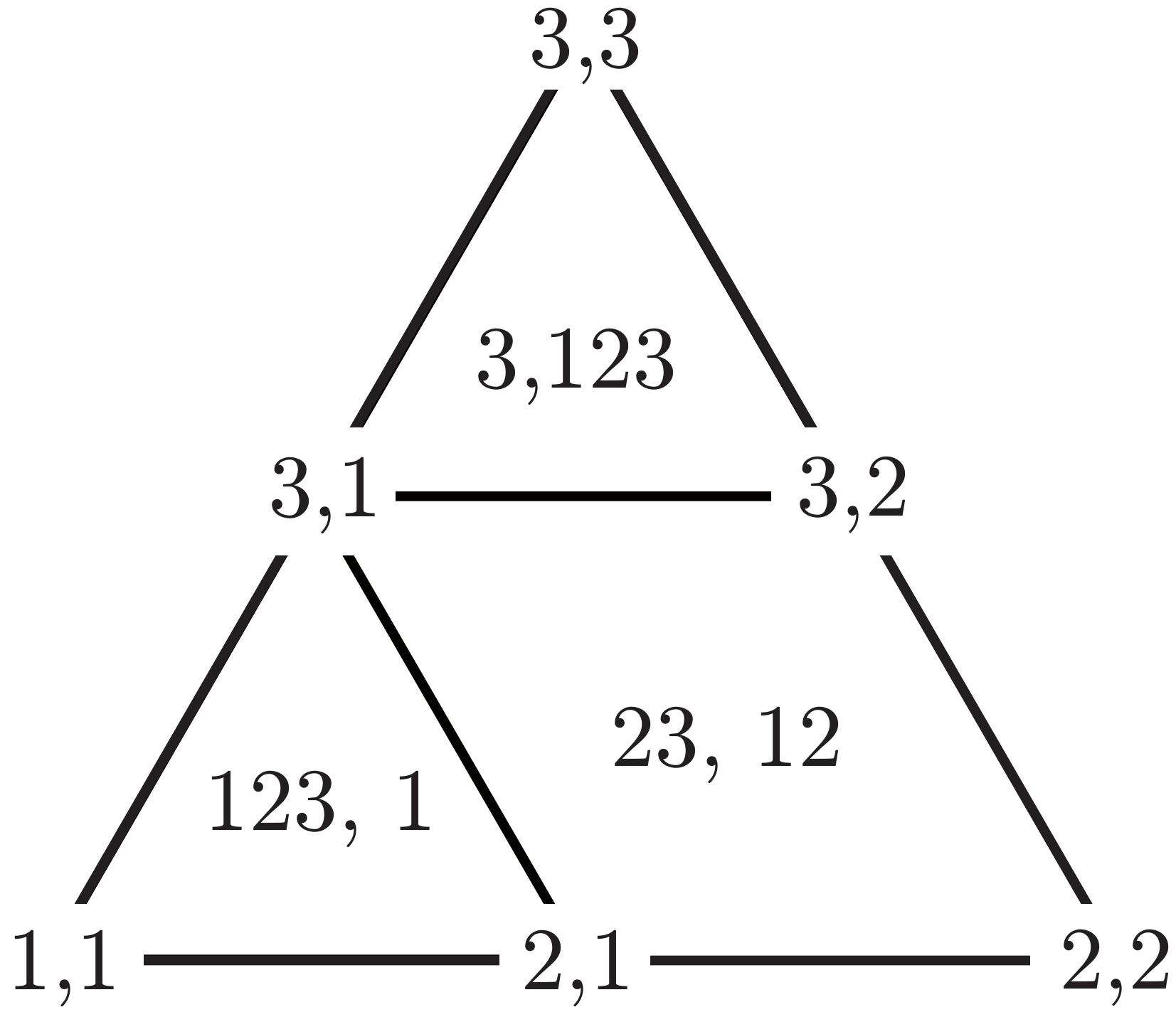}}\qquad
\subfigure[The Poincaré dual \mbox{of \subref{fig:mixsd2thp_a}}.]
{\includegraphics[width=3.5cm]{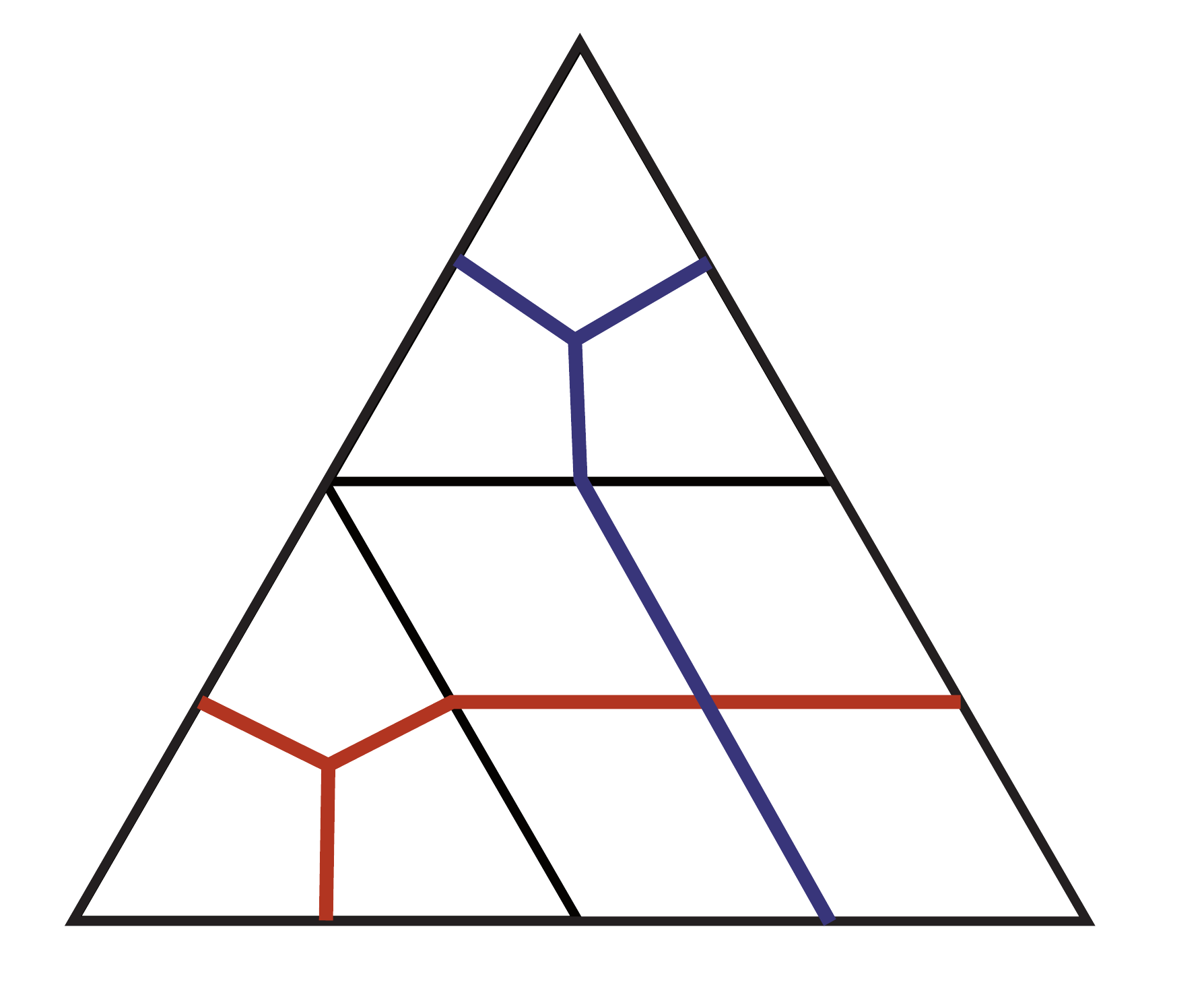}}\qquad
\subfigure[An arrangement of \thps.\label{fig:thpa}]
{\includegraphics[width=3.5cm]{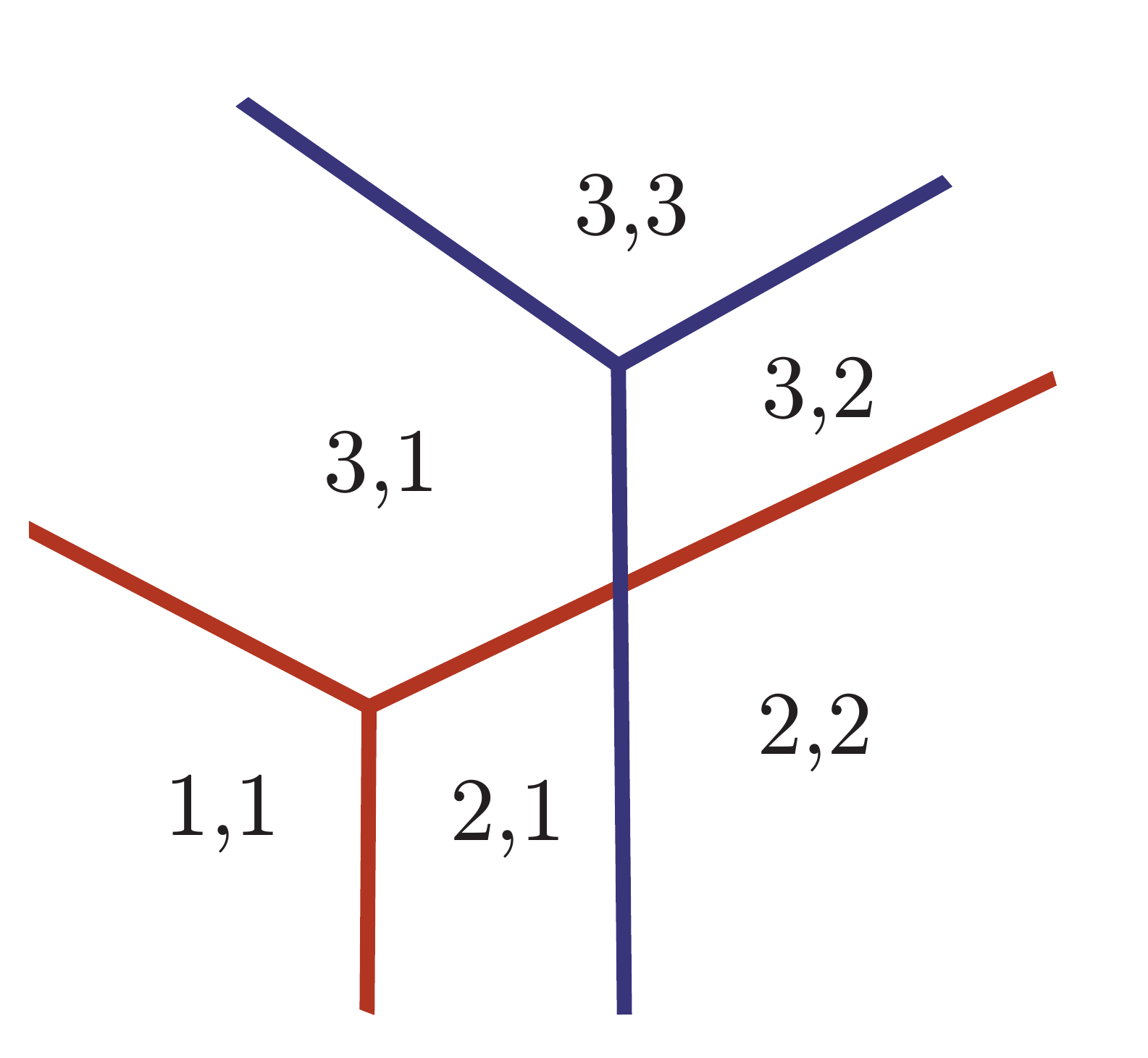}}
\caption{The correspondence between \mixsds and \tphpas.}
\label{fig:mixsd2thp}
\end{figure}

We continue by briefly pointing out what is known about the relations between the above objects.
By the Cayley Trick (\cf Huber, Rambau and Santos \cite{HRS00}) subdivisions of $\simpprod{n-1}{d-1}$ are in bijection with \mixsds of $\dilsimp n{d-1}$.

By \cite[Theorem 6.3]{Ardila/Develin}, the types of a \tom with parameters $(n,d)$  yield a subdivision of $\simpprod{n-1}{d-1}$.
They also conjecture this to be a bijection. 
By \cite[Proposition 6.4]{Ardila/Develin}, these types satisfy all but one of the \tom axioms.

In Oh and Yoo \cite{Oh/Yoo} it is proven that \emph{fine} \mixsds satisfy the elimination axiom.

Moreover, \cite{toprep1} provides further evidence for the close relationship between \dnmixsds and \toms, respectively arrangements of \thps. \Eg \cite[Theorem 4.2]{toprep1} shows that the Poincaré dual of a \dnmixsd is a family of \tphps.

\bigskip

In this paper we introduce arrangements of \tphps and prove a tropical analogue to the Topological Representation Theorem for (classical) oriented matroids by Folkman and Lawrence \cite{Folkman/Lawrence}. Another variant of the Topological Representation Theorem for a different definition of \tphpas is contained in \cite{toprep1}.

A \defn{\tphp} is basically a set which is PL-homeomorphic to a \thp (see also Definition \ref{def:tphp}). The challenging part is the definition of arrangements of these: We have to impose restrictions on the intersections of the pseudohyperplanes in the arrangement. In the classical framework, the intersections of the hyperplanes in the arrangement have to be homeomorphic to linear hyperplanes (of smaller dimension). In the tropical world, however, this approach is not feasible, since intersections of tropical hyperplanes are no longer homeomorphic to tropical hyperplanes (but have a very complicated geometry). In \cite{toprep1}, we instead imposed restrictions on the cell decomposition induced by the \tphps in the arrangement. Here we choose yet another approach that is conceptually closer to the classical case. 

An family of tropical pseudohyperplanes is an arrangement if any set of tropical halfspace boundaries forms an arrangement of affine pseudohyperplanes.


\medskip
With this definition we prove the Topological Representation Theorem:
\begin{theorem}[Topological Representation Theorem]\label{thm:int_toprep}
Every \tom (in general position) can be realised by an \atphp.
\end{theorem}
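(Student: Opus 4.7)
The plan is to pass from a \tom to an arrangement of \tphps via the intermediate object of a \dnmixsd, exploiting the bijection asserted earlier in the abstract together with Theorem~4.2 of \cite{toprep1}.

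First, I would fix a \tom $M$ with parameters $(n,d)$ in general position. By the (alleged) bijection between \toms and \mixsds of $\dilsimp n{d-1}$ -- which is the main result that this paper establishes and may by this point in the text be taken for granted -- $M$ corresponds to a unique \ndmixsd, call it $S$. Since $M$ is in general position, $S$ is a fine \mixsd; in particular every cell is a Minkowski sum of faces whose dimensions add up to $d-1$. Next, I would apply \cite[Theorem 4.2]{toprep1} to $S$: its Poincar\'e dual is a family $\mc H=(H_1,\ldots,H_n)$ of \tphps in $\T^{d-1}$. By construction, the types of points of $\T^{d-1}$ with respect to $\mc H$ agree with the $(n,d)$-types read off from the cells of $S$, and hence with the covectors of $M$. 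So the realisation as a family of \phps is essentially automatic; the content lies in verifying that $\mc H$ is actually an \emph{arrangement} in the sense of the new definition given just before the theorem.

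The main obstacle is therefore the combinatorial arrangement condition: for every choice of tropical halfspaces $\shifths{i_1}{j_1},\ldots,\shifths{i_k}{j_k}$ (one per pseudohyperplane in any subfamily), the collection of their boundaries must form an arrangement of affine pseudohyperplanes in $\R^{d-1}$. To check this, I would proceed by induction on $n$, using the deletion operation $\deletion{M}{i}$ on \toms (which corresponds on the mixed-subdivision side to projecting out one Minkowski summand, and on the dual side to removing the \tphp $H_i$). The inductive step reduces the claim to the base case where only the halfspace boundaries of a single \tphp are considered, where it follows directly from Definition~\ref{def:tphp} since each \tphp is PL-homeomorphic to a genuine \thp and its halfspace boundaries are genuine PL-hyperplanes.

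For the inductive step itself, I would use that general position of $M$ translates into transversality of pairwise intersections of halfspace boundaries in the cell decomposition induced by $\mc H$: two halfspace boundaries from distinct $H_i,H_j$ meet in a PL-submanifold of codimension $2$, and the local picture at every intersection point is controlled by the corresponding cell of the \mixsd $S$, which (by fineness) is a product of lower-dimensional simplices. A refinement argument, together with the topological rigidity supplied by Theorem~4.2 of \cite{toprep1}, then shows that the arrangement axiom is inherited from the finer arrangement in one higher dimension, completing the induction and hence the theorem.
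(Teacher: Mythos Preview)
Your outline correctly identifies the overall shape: pass to the fine \dnmixsd $S$ (this only needs the forward map of \cite[Theorem~6.3]{Ardila/Develin}, not the full bijection---invoking the bijection here would be circular, since the paper derives it \emph{from} the Topological Representation Theorem), dualise via \cite[Theorem~4.2]{toprep1} to a family of \tphps, and then verify the arrangement axiom. But the verification step, which you yourself flag as the main obstacle, is where your proposal has a genuine gap.

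Your suggested induction on $n$ with deletion does not work as stated. The base case (a single \tphp) is indeed easy, but the inductive step is not: knowing that the halfspace boundaries of $H_1,\ldots,H_{n-1}$ form an affine pseudohyperplane arrangement tells you nothing about how the boundaries of $H_n$ cut through the intersections already formed. The assertions that general position yields ``transversality of pairwise intersections'' and that a ``refinement argument together with topological rigidity'' finishes the job are not arguments---they are restatements of what must be proved. In particular, the third axiom of Definition~\ref{def:linear_phpa} (that any intersection of closed sides is a PL-ball) is a global statement that is not obviously inherited under adding one more \tphp, and local product structure at cells of $S$ does not by itself control the global PL type of such intersections.

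The paper's proof takes a completely different route. It introduces, for any pair $(\mc I,\mc J)$ consisting of a tuple of subsets and a tuple of partitions of $[d]$, the subcomplex $M(\mc I,\mc J)$ of types whose $i$-th entry lies in $I_i$ and meets every block of $J_i$. All the intersections $H_A$, $H_A\cap H_e$, $H_A\cap H_e^{\pm}$ and intersections of closed sides that appear in Definition~\ref{def:linear_phpa} are of this form. The paper then shows, using the elimination property for fine \mixsds (available from \cite{Oh/Yoo}), that $M(\mc I,\mc J)$ is connected and pure (Lemma~\ref{lem:mij_connected_pure}), that its closure is a PL-manifold with boundary (Lemma~\ref{lem:mij_manifold}), and that it is constructible (Proposition~\ref{prop:mij_constructible}). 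Zeeman's theorem (a constructible PL-manifold with boundary is a ball) then yields that $M(\mc I,\mc J)$ is a PL-ball (Proposition~\ref{prop:mij_ball}), from which all three axioms of Definition~\ref{def:linear_phpa} follow directly. None of this machinery---constructibility, the $M(\mc I,\mc J)$ complexes, or Zeeman's theorem---appears in your proposal, and something of comparable strength is needed.
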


We also introduce a theory of combinatorial tropical convexity that is closely related to the elimination property of \toms. In fact, it turns out that a \dnmixsd satisfies the elimination property if and only if the combinatorial convex hull of any two cells is path-connected. Since any intersection of affine halfspaces is path-connected, we obtain the following application of Theorem \ref{thm:int_toprep}:

We show that \emph{all} \dnmixsds satisfy the elimination property and hence prove the conjecture by Ardila and Develin:

\begin{theorem}[\Cf {\cite[Conjecture 5.1]{Ardila/Develin}}]\label{thm:mainresult}
Tropical oriented matroids with parameters $(n,d)$ are in bijection with subdivisions of $\simpprod{n-1}{d-1}$ and \mixsds of $\dilsimp n{d-1}$.
\end{theorem}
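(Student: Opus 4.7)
The plan is to chain together the established correspondences and reduce the theorem to a single elimination claim. By the Cayley Trick, subdivisions of $\simpprod{n-1}{d-1}$ are already in bijection with \dnmixsds, so only the correspondence between \toms and \dnmixsds is at stake. One direction is \cite[Theorem 6.3]{Ardila/Develin}, and by \cite[Proposition 6.4]{Ardila/Develin} the types of any \mixsd satisfy every \tom axiom except possibly elimination. Hence the theorem reduces to showing that every \mixsd of $\dilsimp n{d-1}$ satisfies the elimination axiom.

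I would tackle elimination via the paper's combinatorial tropical convexity: a \mixsd satisfies elimination if and only if the combinatorial convex hull of any two cells is path-connected. This recasts the algebraic axiom as a topological condition amenable to the topological-representation machinery introduced earlier.

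For a fine \mixsd, the resulting type system is a \tom in general position, so I would invoke Theorem \ref{thm:int_toprep} to realise it by an \atphp. In this realisation the combinatorial convex hull of any two cells becomes an intersection of affine halfspaces --- by the very definition of \tphpas used in this paper --- and such an intersection is convex, hence path-connected. Elimination therefore holds for every fine \mixsd, recovering the result of Oh and Yoo.

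The main obstacle is the non-fine case. I would proceed by refinement: every \mixsd $S$ is refined by some fine \mixsd $S'$, and each type of $S$ is built in a controlled way from the types of the subcells of $S'$ it contains. Given two types $A, B$ of $S$, I would lift them to a pair of types in $S'$, apply elimination there, and show that the resulting elimination type projects back to a type of $S$ that performs the required elimination at the prescribed coordinate. Proving that this projection commutes suitably with the elimination operation is the delicate point; combined with the fine case above, it completes the chain of bijections claimed by the theorem.
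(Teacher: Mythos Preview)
Your overall scaffolding is correct: reduce to elimination via the Cayley Trick and \cite[Theorem 6.3, Proposition 6.4]{Ardila/Develin}, then use Proposition~\ref{prop:elim_iff_connected} to recast elimination as path-connectedness of $S_{AB}$. But the step where you say ``the combinatorial convex hull of any two cells becomes an intersection of affine halfspaces --- by the very definition of \tphpas'' is a genuine gap. Definition~\ref{def:tphpa2} only tells you that the \emph{tropical halfspace boundaries} form an affine pseudohyperplane arrangement; it says nothing about $S_{AB}$ itself. And in fact $S_{AB}$ is \emph{not} such an intersection in general: if $A_i=\{1,2\}$ and $B_i=\{3\}$, the condition $C_i\in\{\{1,2\},\{3\},\{1,2,3\}\}$ excludes $C_i=\{1\}$, so it is not of the form ``$C_i\subseteq I$'' for any $I$. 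The paper closes this gap with additional machinery you have skipped: one blows up each \tphp (Section~\ref{sec:phpa_to_tom}) to produce, for each coordinate, an \emph{approximating neighbourhood} of $T_{A_i}\cup T_{B_i}\cup T_{A_i\cup B_i}$ that \emph{is} an intersection of affine pseudohalfspaces (Lemma~\ref{lem:aff_php_elim}); Proposition~\ref{prop:intersection_of_aff_phss_connected} then gives connectedness of the approximation, and a homotopy argument transfers this to $S_{AB}$ (Theorem~\ref{thm:tphpa_elim}).

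Your non-fine strategy also diverges from the paper and, as sketched, does not go through. If $S'$ refines $S$ and you lift $A,B\in S$ to subcells $A',B'\in S'$, an elimination $C'\in S'$ satisfies $C'_k\in\{A'_k,B'_k,A'_k\cup B'_k\}$, but the cell $C\in S$ containing $C'$ only satisfies $C_k\supseteq C'_k$; there is no reason for $C_k$ to lie in $\{A_k,B_k,A_k\cup B_k\}$, nor for $C_j$ to equal $A_j\cup B_j$ rather than merely contain $A'_j\cup B'_j$. The paper avoids this projection problem entirely: it applies the same blow-up construction directly to the non-fine $S$, observes that after deleting the original $n$ \tphps the blown-up arrangement is in general position, and then runs the argument of Theorem~\ref{thm:tphpa_elim} verbatim (Theorem~\ref{thm:elimination_nongen}).
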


\bigskip
For quick reference, the general picture is depicted in Figure \ref{fig:big_plan}.

\begin{figure}[h]
\centering
\begin{tikzpicture}
[mynode/.style={rectangle,draw=white, minimum size=6mm}]
\def\x{11}
\def\y{8}

\definecolor{linkcol}{cmyk}{1,.6,0,0}

\node (tom) at (0,0) [mynode, align=center] {tropical oriented\\matroids};
\node (product) at (\x,0) [mynode, align=center] {subdivisions of\\$\simpprod {n-1}{d-1}$};
\node (mixsd) at (\x,-\y) [mynode, align=center] {\mixsds\\of $\dilsimp n{d-1}$};
\node (pseudo) at (0,-\y) [mynode, align=center] {\tphp\\ arrangements\\{\footnotesize(\tudOneB{\cite[Def. 4.3]{toprep1}, Def. \ref{def:tphpa2}})}};

\footnotesize

\definecolor{citecol}{cmyk}{0,.4,1,0}
\draw [<->, draw=tud9c, very thick] (mixsd.north) to node[near end,auto,align=right] {\tudNineC{Cayley Trick} \\ {\definecolor{citecol}{cmyk}{.3,1,.9,0} \tudNineC{\cite[Thm.\ 3.1]{HRS00}}}} (product.south);

\draw [->, draw=tud7b, very thick] (tom.10) to node[auto] {\tudSevenB{\cite[Thm.\ 6.3]{Ardila/Develin}}} (product.170);

\draw [->, draw=tud7b, very thick, dashed] (product.190) to node[auto] {\tudSevenB{\cite[Conj.\ 5.1]{Ardila/Develin}}} (tom.352);

\draw [->, draw=tud1b, very thick] (mixsd.120) to node [midway,above,sloped, align=center] %
{\tudSevenB{\qquad~~ boundary, comparability, surrounding: \cite[Prop.\ 6.4]{Ardila/Develin}}\\ %
\tudSevenB{elimination for $d=3$: \cite[Thm.\ 6.5]{Ardila/Develin}} \\%
\definecolor{citecol}{RGB}{80,182,149}\textcolor{tud3a}{elimination for \emph{fine} case: \cite[Prop.\ 4.12]{Oh/Yoo}}
} 
node [midway,below,sloped, align=center] {\tudOneB{elimination in general case}\\\tudOneB{Thm.\ \ref{thm:elimination_nongen}}}(tom.300);

\draw [->, draw=tud1b, very thick] (mixsd.west) to node[auto,swap, align=center] {\tudOneB{Topological Representation Theorem}\\\tudOneB{\cite[Thm. 4.4]{toprep1}, Thm. \ref{thm:toprep2}}} node [auto, align=center] {\tudSevenB{\cite[Conj.\ 5.7]{Ardila/Develin}}\\ \tudNineC{realisable/regular case: \definecolor{citecol}{cmyk}{.3,1,.9,0}\cite[Thm.\ 1]{Develin/Sturmfels}}} (pseudo.east);

\draw [->, draw=tud1b, very thick] (pseudo.north) to node[near start, auto, align=left, swap] {\tudOneB{with Def. \ref{def:tphpa2}}\\\tudOneB{(general position)}\\\tudOneB{Thm.\ \ref{thm:tphpa_elim}}} (tom.south);
\end{tikzpicture}

\caption[``The bigger picture''.]{The correspondences between the four concepts of \toms, \dnmixsds, subdivisions of a product of two simplices and \tphpas.
}
\label{fig:big_plan}
\end{figure}

\bigskip
The paper is organised as follows: In Section \ref{sec:toms} we briefly review the definition of \toms. In Section \ref{sec:mixsds} we discuss \mixsds of dilated simplices. In Section \ref{sec:conv} we have a closer look at the elimination property and define a notion of convexity in \toms. In Section \ref{sec:toprep} we introduce arrangements of \tphps in analogy to (classical) pseudohyperplane arrangements (see Definition \ref{def:tphpa2}) and prove a Topological Representation Theorem (Theorem \ref{thm:toprep2}). Finally, in Section \ref{sec:elim} we apply our results to prove Theorem \ref{thm:mainresult}.

\bigskip
This is the follow-up paper of \cite{toprep1}. A joint extended abstract \cite{toprep_fpsac} of this and \cite{toprep1} has been presented at FPSAC 2012. Moreover, the results are also contained in \cite{mydiss}.

\section{Tropical Oriented Matroids}\label{sec:toms}

The following definitions are analogous to those in \cite{Ardila/Develin}, respectively \cite{toprep1}.

A \defn{refinement} of an $(n,d)$-type $A$ with respect to an ordered partition $P=(P_1,\ldots,P_k)$ of $[d]$ is the $(n,d)$-type $B=\restr A P$ where $B_i=A_i\cap P_{m(i)}$ and $m(i)$ is the smallest index where $A_i\cap P_{m(i)}$ is non-empty for each $i\in[n]$. A refinement is \defn[total!refinement]{total} if all $B_i$ are singletons.

\smallskip
Given $(n,d)$-types $A$ and $B$, the \defn{comparability graph} $\compgr AB$ is a multigraph with node set $[d]$. For $1\leq i\leq n$ there is an edge for every $j\in A_i,k\in B_i$. This edge is undirected if $j,k\in A_i\cap B_i$ and directed $j\rightarrow k$ otherwise. (We consider the comparability graph as a graph without loops.)
Note that there may be several edges (with different directions) between two nodes.

A \defn[]{directed path} in the comparability graph is a sequence $e_1,e_2,\ldots,e_k$ of incident edges at least one of which is directed and all directed edges of which are directed in the ``right'' direction. A \defn[]{directed cycle} is a directed path whose starting and ending point agree. The graph is \defn[]{acyclic} if it contains no directed cycle.

\begin{definition}[{\Cf \cite[Definition 3.5]{Ardila/Develin}}]
\label{def:trop_or_mat}
A \defn{tropical oriented matroid} $M$ (with parameters $(n,d)$) is a collection of $(n,d)$-types \index{type!in a tropical oriented matroid|see{$(n,d)$-type}} which satisfies the following four axioms:
\begin{itemize}
\item\defn[boundary axiom]{Boundary}: For each $j\in[d]$, the type $(j,j,\ldots,j)$ is in $M$.
\item\defn[comparability axiom]{Comparability}: The comparability graph $\compgr AB$ of any two types $A,B\in M$ is acyclic.
\item\defn[elimination axiom]{Elimination}: If we fix two types $A, B\in M$ and a position $j\in[n]$, then there exists a type $C$ in $M$ with $C_j=A_j\cup B_j$ and $C_k\in\{A_k,B_k,A_k\cup B_k\}$ for $k\in[n]$.
\item\defn[surrounding axiom]{Surrounding}: If $A$ is a type in $M$, then any refinement of $A$ is also in $M$.
\end{itemize}
We call $d\eqcolon\rank M$ the \defn[rank!of a \tom]{rank} and $n$ the \defn[size!of a \tom]{size} of $M$. 
\end{definition}

\begin{example}
By \cite[Theorem 3.6]{Ardila/Develin} the set of types of an arrangement of $n$ tropical hyperplanes in $\T^{d-1}$ is a \tom with parameters $(n,d)$.
\end{example}
We  call \toms coming from an arrangement of \thps \defn{realisable}. Recall that by Develin and Sturmfels \cite{Develin/Sturmfels} realisable \toms are in bijection with \emph{regular} \dnmixsds.

\begin{definition}\label{def:tom_props}
The \defn[dimension!of a type]{dimension} of an $(n,d)$-type $A$ is the number of connected components of $K_A$ minus $1$.
A \defn[vertex!in a \tom]{vertex} is a type of dimension $0$, an \defn[edge!in a \tom]{edge} a type of dimension $1$ and a \defn{tope} a type of full dimension $d-1$, \ie each tope is an $n$-tuple of singletons.

A \tom $M$ is \defn[general position]{in general position} if for every type $A\in M$ the graph $K_A$ is acyclic.

For two types $A,B$ we  write $A\supseteq B$ if $A_i\supseteq B_i$ for each $i\in[n]$. 
\end{definition}

\begin{definition}[\Cf {\cite[Propositions 4.7 and 4.8]{Ardila/Develin}}] \label{def:tom_deletion_contraction}
 Let $M$ be a \tom with parameters $(n,d)$.
\begin{enumerate}
\item For $i\in[n]$ the \defn[deletion!in a \tom]{deletion} $\deletion Mi$ consisting of all $(n-1,d)$-types which arise from types of $M$ by deleting coordinate $i$ is a \tom with parameters $(n-1,d)$.
\item For $j\in[d]$ the \defn[contraction!in a \tom]{contraction} $\contraction Mj$ consisting of all types of $M$ that do not contain $j$ in any coordinate is a \tom with parameters $(n,d-1)$.
\end{enumerate}
\end{definition}

There is also a notion of duality for $(n,d)$-types:
\begin{definition}[{\Cf\cite[Definitions 5.3 and 5.4]{Ardila/Develin}}]\label{def:tom_dual}
If $A$ is a bounded $(n,d)$-type then we get a $(d,n)$-type $\transpose A$, the \defn[dual!of an $(n,d)$-type]{dual type} of $A$, by interchanging the roles of $n$ and $d$ in the type graph $K_A$; \ie $\transpose A$ is defined by \[i\in A_j\quad \Leftrightarrow\quad j\in\transpose A_i.\]
If $M$ is a \tom with parameters $(n,d)$ then we define the \defn[dual!of a tropical oriented matroid]{dual} $\transpose M$ by \[\transpose M\coloneq\{\restr{\transpose A}P\mid A \text{ vertex of }M, p \text{ ordered partition of }[n]\}.\]
\end{definition}
We will later see in Corollary \ref{cor:dual_tom} that if $M$ is a \tom with parameters $(n,d)$, then its dual $\transpose M$ is a \tom with parameters $(d,n)$.


\section{Mixed Subdivisions of \boldmath{$\dilsimp n{d-1}$}}
\label{sec:mixsds}

Given two sets $X,Y$ their \defn{Minkowski sum} $X+Y$ is given by $X+Y\coloneq\{x+y\mid x\in X,y\in Y\}$.

\begin{definition}\label{def:mixed_sd}
Let $P_1,\ldots,P_k\subset \R^n$ be (full-dimensional) convex polytopes. Then a polytopal subdivision  $\{Q_1,\ldots,Q_s\}$ of $P\coloneq\sum P_i$ is a \defn{mixed subdivision} if it satisfies the following conditions:
\begin{enumerate}
\item Each $Q_i$ is a Minkowski sum $Q_i=\sum\limits_{j=1}^k F_{i,j}$, where $F_{i,j}$ is a face of $P_j$.
\item For $i,j\in[s]$ we have that $Q_i\cap Q_j=(F_{i,1}\cap F_{j,1})+\ldots+(F_{i,k}\cap F_{j,k})$.
\end{enumerate}
\end{definition}
A \dnmixsd is \defn{fine} if there is no other \dnmixsd refining it. 

We are interested in the case of \mixsds where $P_i=\nsimplex{d-1}$ for each $i$. Then $\sum P_i=\dilsimp n{d-1}$ is a dilated simplex. By Santos \cite{Santos03} a subdivision of $\dilsimp n{d-1}$ is mixed if and only if each cell is a Minkowski sum of $n$ faces of $\nsimplex{d-1}$. By Ardila and Develin \cite[Theorem 6.3]{Ardila/Develin} the types of a \tom with parameters $(n,d)$ yield a \dnmixsd. A \tom is in general position if and only if its \mixsd is fine.

\smallskip
To avoid confusion with the vertices of \toms, we  speak of the $0$-dimensional cells of a \mixsd as \defn{topes}. By \cite[Proposition 3.1]{toprep1}, a \dnmixsd is uniquely determined by its topes.

\subsection{Placing in mixed subdivisions}
Recall that triangulations of $\simpprod{n-1}{d-1}$ are in bijection with the fine \dnmixsds via the Cayley Trick.
There is a well-known construction that produces a triangulation of $\simpprod{n'}{d'}$ (called the \defn{placing triangulations}) from one of $\simpprod nd$ for $n'\geq n,d'\geq d$. See De Loera, Rambau and Santos \cite[Section 4.3.1]{LRS-book} for more details.

Since we will need this construction in Section \ref{sec:elim}, we now examine how placing works in the \mixsd point of view:

Suppose we are given a \mixsd $S$ of $\dilsimp n{d-1}$. Let $T$ be the corresponding subdivision of $\simpprod{n-1}{d-1}$.
There are two possible ways to extend this by placing:

\begin{itemize}
\item We can embed $T$ into $\simpprod n{d-1}$. \Ie we extend $S$ to a \mixsd of $\dilsimp {(n+1)}{d-1}$.

\item We can embed $T$ into $\simpprod{n-1}{d}$. \Ie we extend $S$ to a \mixsd of $\dilsimp {n}{d}$.
\end{itemize}

We will call the operations \defn[n-placing@$n$-placing]{$n$-placing}, respectively \defn[d-placing@$d$-placing]{$d$-placing}, referring to whether we increase $n$ or $d$.
The two operations are of course dual to each other.

\paragraph{\boldmath{$n$}-Placing}
There are $d$ vertices to be placed, namely the vertices $(n+1,1),\ldots,(n+1,d)$. We will denote both the \dnmixsd and the corresponding subdivision of $\simpprod{n-1}{d-1}$ by $S$. Morever, we will apply operations as defined for \toms to the types of both \mixsds and triangulations of products of simplices.

\medskip

Let $\sg$ be some permutation of $[d]$. 
First we place the vertex $(n+1,\sg_1)$. From this vertex every maximal (\ie $(n+d-2)$-dimensional) simplex of $S$ is visible. Thus, for every maximal simplex $B$ we add the simplex $B\cup\{\sg_1\}$ and all its faces to $S$ to get $S_1$. In the \mixsd this corresponds to adding a new entry $\{\sg_1\}$ at the end of every type in $S$. Thus, $S_1$ is just a copy of $S$ in the $\sg_1$-th corner of $\dilsimp{(n+1)}{d-1}$.
\begin{figure}[th]
\centering
\includegraphics[width=7cm]{placing.2}
\caption{A \mixsd $S$ of $\dilsimp32$ (black) in its $n$-placing extension with respect to the permutation $(1,2,3)$. 
}
\label{fig:placing_ex}
\end{figure}

As for placing the vertex $(n+1,\sg_2)$, the only visible simplices are those whose type does not contain $\sg_1$ except in the last entry (where we just added it). In the \mixsd, placing $(n+1,\sg_2)$ corresponds to appending a new entry $\{\sg_1,\sg_2\}$ to the end of every vertex in the contraction $\contraction S{\sg_1}$ and then adding all refinements of those to obtain $S_2$.

Placing the remaining vertices works similarly: When placing $(n+1, \sg_i)$, we create the set $S_i$ containing all vertices in the contraction $\contraction S {\{\sg_1, \ldots,\sg_{i-1}\}}$ with a new entry $\{\sg_1,\ldots,\sg_i\}$ appended and all refinements of those.

Figure \ref{fig:placing_ex} shows an example of an $n$-placing extension.
%

\paragraph{\boldmath{$d$}-Placing}

There are $n$ vertices to be placed, namely the vertices $(1,d+1),\ldots,(n,d+1)$. 

Let $\tau$ be some permutation of $[n]$.
Recall that for the construction of the $n$-placing extension the contractions $\contraction S{\sg_1}$, $\contraction S{\{\sg_1,\sg_2\}}$, $\ldots$, $\contraction S{\{\sg_1,\sg_2,\ldots,\sg_d\}}$ for some permutation $\sg$ of $[d]$ played an important role.

In the same way, the deletions $\deletion S{\tau_1}, \deletion S{\{\tau_1,\tau_2\}}, \ldots, \deletion S{\{\tau_1,\tau_2,\ldots,\tau_n\}}$ will be important in the construction of the $d$-placing extension of $S$.

\medskip
We will only consider the maximal simplices in $S'$.

First place the vertex $(\tau_1,d+1)$. From this vertex every maximal simplex in $S$ is visible. Hence for every maximal simplex $B$ we add the simplex $B\cup\{(\tau_1,d+1)\}$ to get $S_1$. In the \mixsd this corresponds to adding $d+1$ to $B_{\tau_1}$.

When we then place $(\tau_2,d+1)$, the visible simplices are the simplices in $S$ with the $\tau_i$-th entry replaced by $\{d+1\}$.

In general, when placing the $i$-th vertex $(\tau_i,d+1)$, the visible simplices correspond to the cells in the deletion $\deletion S{\{\tau_1,\ldots,\tau_{i-1}\}}$ with additional entries $\{d+1\}$ at the positions $\tau_1,\ldots,\tau_{i-1}$.

See Figure \ref{fig:d_placing_3d} for an illustration.

\begin{figure}[h]
\centering
\includegraphics[width=4.5cm]{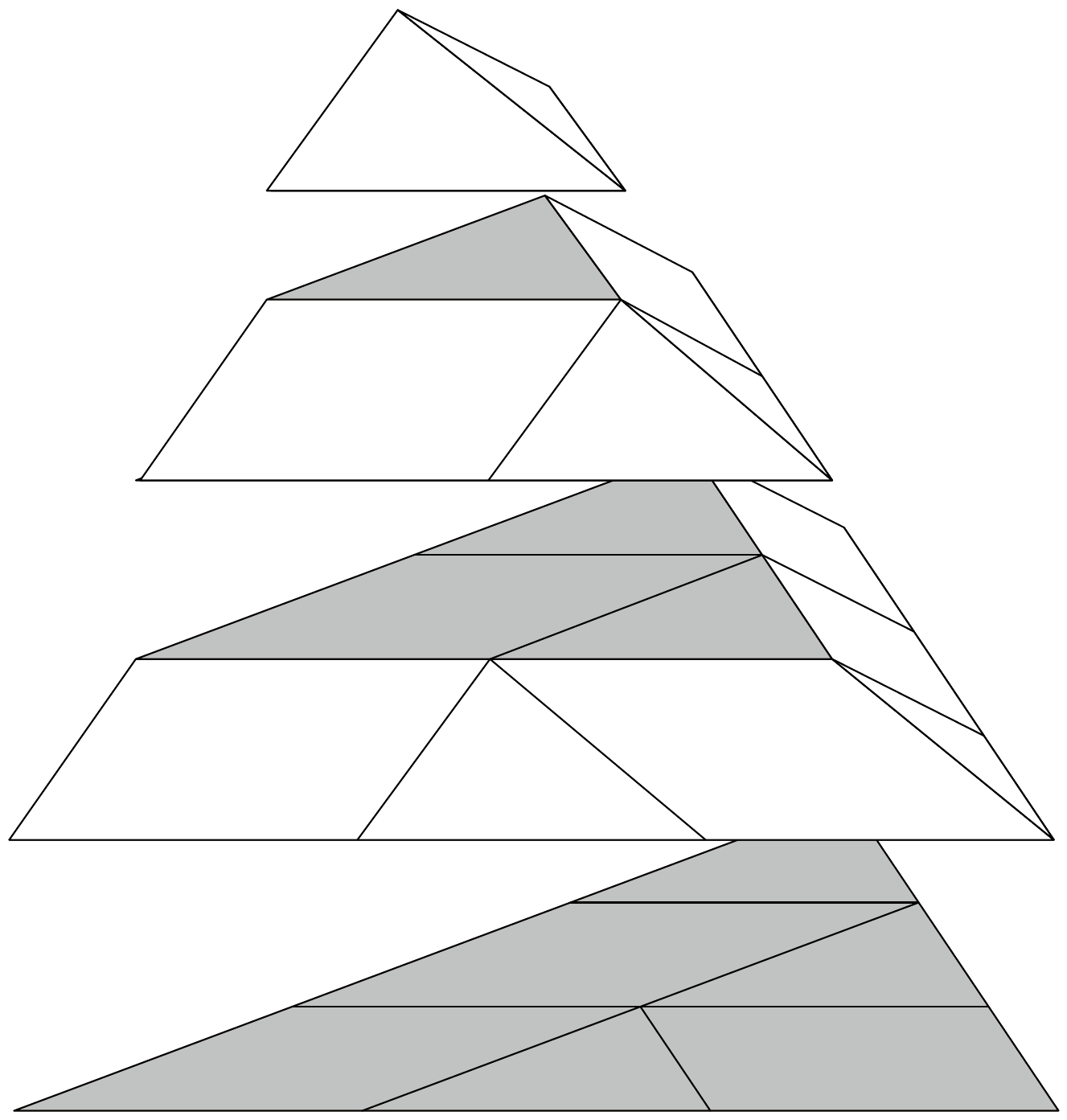}
\caption[A $d$-placing extension of a \mixsd of $\dilsimp 32$.]{A $3$-dimensional $d$-placing extension of a \mixsd of $\dilsimp32$.}
\label{fig:d_placing_3d}
\end{figure}

\section{Convexity in \toms and the elimination property}\label{sec:conv}
Recall that by Ardila and Develin \cite[Theorem 6.3]{Ardila/Develin} the types of a \tom with parameters $(n,d)$  yield a subdivision of $\simpprod{n-1}{d-1}$.
Since by \cite[Proposition 6.4]{Ardila/Develin} these types satisfy the boundary, comparability and surrounding axioms, the only thing left open is elimination.

By Oh and Yoo \cite[Proposition 4.6]{Oh/Yoo}, \emph{fine} \mixsds satisfy the elimination property.

\medskip
In the realisable case, the \emph{elimination axiom} describes the intersection of a tropical line segment from $A$ to $B$ with the $j$-th \thp. In other words, in the according arrangement of tropical \emph{pseudo}hyperplanes (dual to the \mixsd) all eliminations of $A$ and $B$ (for all $j$) describe the line segment from $A$ to $B$.

\medskip
One can exploit the elimination property of \toms to obtain topological properties of the according \mixsds.

\smallskip
\begin{definition}\label{def:tom_convex}
Let $M$ be a \tom and $A,B\in M$ two types. Then the set \[M_{AB}\coloneq\{C\in M\mid C_i\in\{A_i,B_i,A_i\cup B_i\}\text{ for all }i\in[n]\}\] is the \defn{(combinatorial) convex hull} of $A$ and $B$.
Analogously we define the \defn{(combinatorial) convex hull} $S_{AB}$ of two cells in a \mixsd $S$ of $\dilsimp n{d-1}$.

We say that a subset $C$ of a \tom $M$ (or equivalently, a subcomplex of a \dnmixsd) is \defn[convex!subset of \tom]{convex}\index{convex!subcomplex of \mixsd|see{convex!subset of \tom}} if for any $A, B\in C$ we have that $M_{AB}\subseteq C$.
\end{definition}

Develin and Sturmfels \cite{Develin/Sturmfels} defined a notion of convexity in tropical geometry: \index{tropical!convexity}
Given two points $x,y\in\T^{d-1}$ the \defn[tropical!line segment]{tropical line segment} connecting them is the set \[[x,y]_{\mathrm{trop}}\coloneq\{(\lambda\otimes x)\oplus(\mu\otimes y)\mid \lambda,\mu\in\R\}.\]

The above notion for convexity in \toms generalises this in a natural way:
In the realisable case \index{realisable!tropical oriented matroid} the convex hull $M_{AB}$ of two types contains all cells that intersect a tropical line segment between two points in open cells of types $A$ and $B$ in \emph{some} realisation of $M$. See Figure \ref{fig:conv_hull} for an illustration.

\begin{figure}[h]
\centering
\includegraphics[width=7cm]{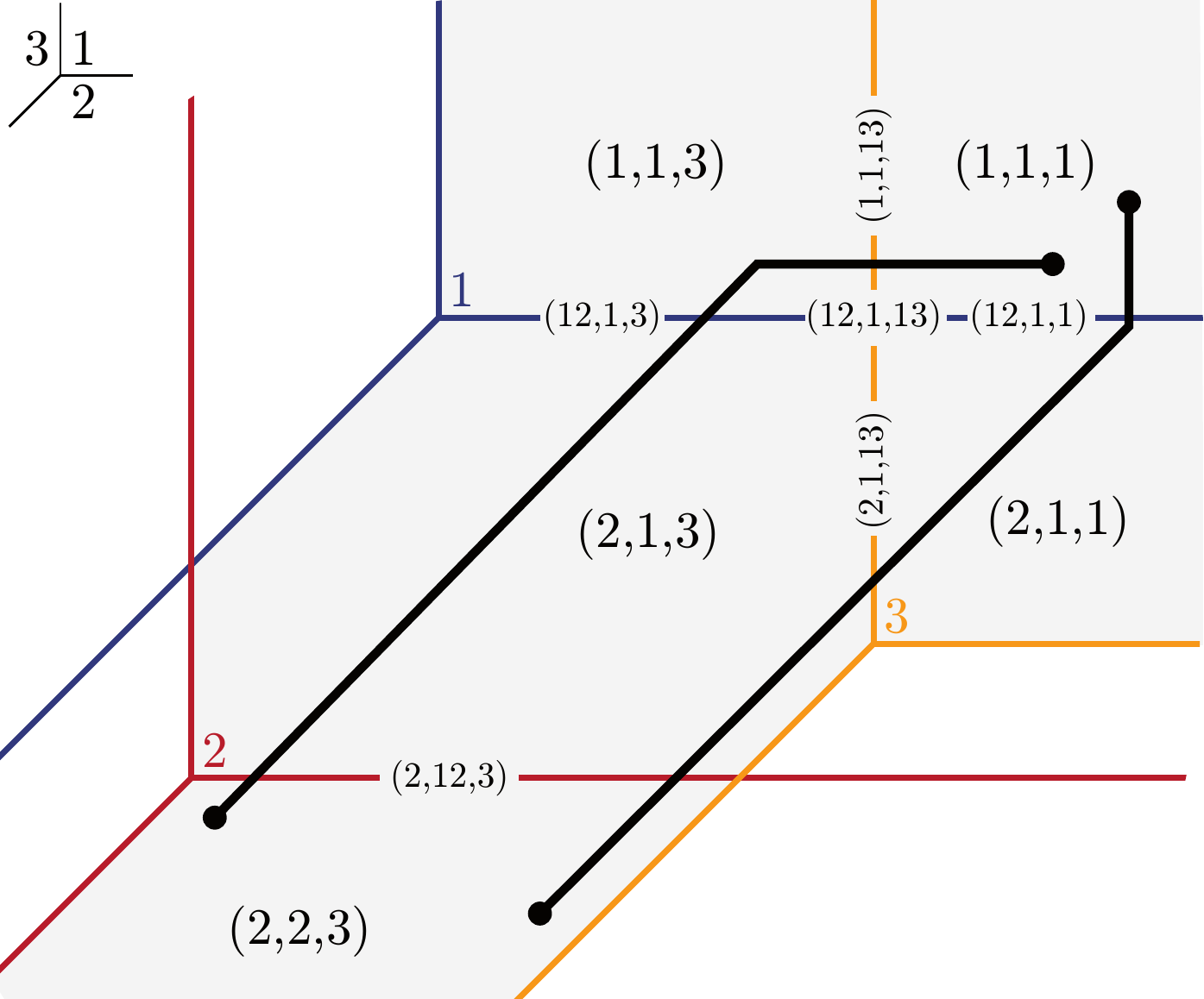}
\caption[The convex hull of two types in a realisable \tom.]{The convex hull of two types $A=(2,2,3)$, $B=(1,1,1)$ in a realisable \tom with parameters $(3,3)$. In this realisation every cell in the convex hull intersects a tropical line segment between points in $A$ and points in $B$. Note though that there are other realisations of the same \tom where this does not hold. (Imagine shifting the apex of the second tropical hyperplane further to the right until it is no longer possible to draw a line segment from $A$ to $B$ through the cell $(1,1,3)$.)}
\label{fig:conv_hull}
\end{figure}

\medskip
The following proposition establishes a connection between the combinatorial convex hull and the elimination property.
\begin{proposition}\label{prop:elim_iff_connected}
The types of the cells in a \mixsd $S$ of $\dilsimp n{d-1}$ satisfy the elimination property \tiff $ S_{AB}$ is path-connected (as a subcomplex of $S$) for every $A,B\in S$.
\end{proposition}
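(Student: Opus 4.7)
The plan is to prove both implications of the equivalence separately.

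For the direction ``elimination $\Rightarrow$ path-connected'' I would induct on the complexity measure $\sigma(A,B) := \sum_{i=1}^n |A_i \triangle B_i|$. The base case $\sigma(A,B)=0$ forces $A = B$, so $S_{AB} = \{A\}$ is trivially path-connected. For the inductive step I distinguish two cases. If some coordinate $j$ has $A_j, B_j$ incomparable (neither contained in the other), then invoking elimination at $j$ yields a cell $C \in S_{AB}$ with $C_j = A_j \cup B_j$; a short computation (using that both $|A_j \setminus B_j|$ and $|B_j \setminus A_j|$ are strictly positive in the incomparable case) shows $\sigma(A,C), \sigma(C,B) < \sigma(A,B)$. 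By the inductive hypothesis, $|S_{AC}|$ and $|S_{CB}|$ are path-connected; since $S_{AC}, S_{CB} \subseteq S_{AB}$ and both contain $C$, their union is path-connected and contains both $A$ and $B$. If instead no coordinate is incomparable, then $A_i$ and $B_i$ are comparable for every $i$, so $A_i \cap B_i$ equals the smaller of the two and is non-empty; by the intersection formula in Definition \ref{def:mixed_sd}, $\bar A \cap \bar B = \sum_i (A_i \cap B_i) \neq \emptyset$, so $A$ and $B$ themselves share a face and are directly connected in $|S_{AB}|$.

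For the direction ``path-connected $\Rightarrow$ elimination'' fix $A, B \in S$ and $j \in [n]$. The comparable cases are immediate: if $A_j \subseteq B_j$ take $C = B$, if $B_j \subseteq A_j$ take $C = A$. Assume $A_j, B_j$ are incomparable. Consider first the subcase $A_j \cap B_j = \emptyset$: path-connectedness of $|S_{AB}|$ yields a sequence $A = C^0, C^1, \ldots, C^m = B$ of cells in $S_{AB}$ with $\bar{C^i} \cap \bar{C^{i+1}} \neq \emptyset$. Each $C^i_j \in \{A_j, B_j, A_j \cup B_j\}$; if some $C^i_j = A_j \cup B_j$, we are done. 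Otherwise every $C^i_j \in \{A_j, B_j\}$ and some transition must occur, but such a transition would require $C^i_j \cap C^{i+1}_j \supseteq A_j \cap B_j = \emptyset$, which by the Minkowski intersection formula forces $\bar{C^i} \cap \bar{C^{i+1}} = \emptyset$, a contradiction. For the subcase $A_j \cap B_j \neq \emptyset$, I would reduce to the disjoint subcase by replacing $B$ with a refinement: let $P$ be the ordered partition $(B_j \setminus A_j,\; A_j,\; [d] \setminus (A_j \cup B_j))$ (dropping an empty block if necessary) and set $B' := \restr{B}{P}$. Then $B'_j = B_j \setminus A_j$ is disjoint from $A_j$ while $A_j \cup B'_j = A_j \cup B_j$, so applying the disjoint case to the pair $(A, B')$ produces a cell $C' \in S$ with $C'_j = A_j \cup B_j$.

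I expect the main obstacle to be the bookkeeping in the incomparable-intersecting subcase: the cell $C'$ obtained from the reduction has $C'_k \in \{A_k, B'_k, A_k \cup B'_k\}$ rather than the required $\{A_k, B_k, A_k \cup B_k\}$, so one must either refine more carefully (perhaps applying refinements to $A$ and $B$ simultaneously, or iterating over elements of $A_j \cap B_j$) or use the comparability and surrounding axioms to ``enlarge'' $C'$ at the remaining coordinates into a genuine elimination cell for $(A, B, j)$. This will likely be where most of the technical work lies.
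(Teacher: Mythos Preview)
Your ``elimination $\Rightarrow$ path-connected'' direction is correct and is essentially the paper's proof. Both induct on a distance between $A$ and $B$, eliminate at an incomparable position to obtain an intermediate $C\in S_{AB}$ (with $S_{AC},S_{CB}\subseteq S_{AB}$), and handle the all-comparable situation by observing that $A$ and $B$ share the common face $A\cap B\in S_{AB}$. The only difference is cosmetic: the paper uses the measure $\card{\dist(A,B)}\coloneq\card\{i\mid A_i\not\subseteq B_i,\ B_i\not\subseteq A_i\}$ instead of your $\sigma(A,B)$, which has the minor advantage that the all-comparable case becomes the base case outright, eliminating your case split inside the inductive step.

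For ``path-connected $\Rightarrow$ elimination'' the paper's argument is a single sentence: a path from $A$ to $B$ in $S_{AB}$ must contain a cell with $C_j=A_j\cup B_j$. It does not separate the disjoint and intersecting subcases at all. Your disjoint-case argument is exactly the justification behind that sentence, and your worry about the incomparable--intersecting subcase is legitimate under the topological reading of path-connectedness: when $A_j\cap B_j\neq\emptyset$, two cells of $S_{AB}$ with $j$-th entries $A_j$ and $B_j$ \emph{can} be adjacent through a face whose $j$-th Minkowski summand is $A_j\cap B_j$, so a path could in principle avoid $A_j\cup B_j$. The paper simply does not address this.

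Your proposed refinement reduction, however, does not close the gap --- for precisely the reason you anticipate. After refining $B$ to $B'=\restr BP$, the coordinates $B'_k$ for $k\neq j$ are of the form $B_k\cap P_{m(k)}$ and in general lie in none of $\{A_k,B_k,A_k\cup B_k\}$; the resulting $C'$ then need not belong to $S_{AB}$, and there is no evident mechanism (using only comparability and surrounding) to enlarge it into a genuine elimination of $(A,B)$. Refining $A$ and $B$ simultaneously only makes this worse. So your forward direction is complete and matches the paper; your backward direction is argued more carefully than the paper's one-liner but leaves the same subcase open, and the fix you sketch does not work as stated.
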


\begin{proof}
The convex hull $S_{AB}$ clearly contains each elimination of $A$ and $B$.
If $S_{AB}$ is path-connected then there is a path from $A$ to $B$ in $S_{AB}$. For any given $j\in[n]$ this path must contain a cell $C$ with $C_j=A_j\cup B_j$. Then $C$ works as elimination for $A$ and $B$ with respect to $j$.

Conversely, assume that $S$ satisfies the elimination property and fix $A,B\in S$. We have to show that there exists a path from $A$ to $B$ in $S_{AB}$. 

Denote $\dist(A,B)\coloneq \{ i\mid A_i\not\subseteq B_i, B_i\not\subseteq A_i \}$. If $\card{\dist(A,B)}=0$ then $A\cap B\in S_{AB}$ and we are done.
Otherwise choose some position $i\in\dist(A,B)$ and let $C$ denote the elimination of $A$ and $B$ with respect to $i$. Then $C\in S_{AB}$ and we will now show that \mbox{$\card{\dist(A,C)}, \card{\dist(B,C)}\leq \card{\dist(A,B)}-1$}.

Indeed consider $j\nin\dist(A,B)$. Then $j\nin\dist(A,C)$ follows immediately. Moreover, $i\in\dist(A,B)\setminus\dist(A,C)$. Thus $\card{\dist(A,C)}\leq\card{\dist(A,B)}-1$ and similarly for $\dist(B,C)$.

The claim then follows by iterating this process.
\end{proof}

\begin{corollary}\label{cor:conv_connected}
A convex set in a \tom is path-connected.
\end{corollary}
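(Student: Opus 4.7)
The plan is to reduce the statement directly to Proposition~\ref{prop:elim_iff_connected}. By \cite[Theorem 6.3]{Ardila/Develin}, the types of a \tom $M$ form the cells of a \dnmixsd $S_M$, and the elimination axiom of $M$ is exactly the elimination property appearing in the proposition. Hence the hypothesis of Proposition~\ref{prop:elim_iff_connected} is automatically met, and for every pair of types $A, B \in M$ the combinatorial convex hull $M_{AB} = (S_M)_{AB}$ is path-connected as a subcomplex of $S_M$.

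With that in hand, the corollary is essentially immediate. Let $C \subseteq M$ be convex and fix $A, B \in C$. By definition of convexity, $M_{AB} \subseteq C$. The path from $A$ to $B$ provided by Proposition~\ref{prop:elim_iff_connected} lives inside $M_{AB}$, hence inside $C$. Since $A$ and $B$ were arbitrary, $C$ is path-connected.

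I do not expect any real obstacle here; the only thing to verify is that the path constructed in the proof of Proposition~\ref{prop:elim_iff_connected} genuinely stays inside $M_{AB}$ rather than merely inside $S_M$. This is clear from the inductive construction there: each eliminating type $E$ is chosen so that $E \in M_{AB}$, and the recursive calls take place inside the smaller convex hulls $M_{AE}$ and $M_{BE}$, which one checks coordinate by coordinate are themselves subsets of $M_{AB}$. So the corollary follows without any new ingredient beyond the definitions and the preceding proposition.
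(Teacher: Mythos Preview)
Your proposal is correct and follows the same route as the paper: invoke Proposition~\ref{prop:elim_iff_connected} (via the identification of $M$ with its \mixsd from \cite[Theorem 6.3]{Ardila/Develin}) to get that each $M_{AB}$ is path-connected, then use convexity of $C$ to absorb that path. The paper's proof is a one-liner that leaves the final step (``$M_{AB}\subseteq C$, hence the path lies in $C$'') and the verification that the inductive construction stays inside $M_{AB}$ implicit; you have simply spelled these out.
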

\begin{proof}
Since \toms satisfy the elimination property, Proposition \ref{prop:elim_iff_connected} implies that the convex hull of any two types is path-connected.
\end{proof}

\section{The Topological Representation Theorem}\label{sec:toprep}
This section comprises the long and winding road towards the Topological Representation Theorem for \toms. Note that a different version (with a different definition of \tphpas) is contained in \cite{toprep1}.

\medskip
We first introduce \tphps:
\begin{definition}[{\Cf \cite[Definition 4.3]{toprep1}}]\label{def:tphp}
A \defn{tropical pseudohyperplane} is the image of a \thp under a PL-homeo\-mor\-phism of $\TP^{d-1}$ that fixes the boundary.
\end{definition}

By \cite[Theorem 4.2]{toprep1} the Poincaré dual of a \dnmixsd is a family of \tphps.

\subsection{Linear and affine pseudohyperplanes}
Locally, (\ie in the parallelepiped cells of their \mixsds) we want \tphps to intersect as ``ordinary'' hyperplanes. We thus introduce arrangements of linear pseudohyperplanes on the basis of arrangements of pseudospheres as defined in Björner, Las Vergnas, Sturmfels, White and Ziegler \cite[Def.\ 5.1.3]{BLSWZ}.
\begin{definition}[\Cf {\cite[Definition 5.1.3]{BLSWZ}}]
\label{def:linear_phpa}
A \defn{pseudohyperplane} is a set that is PL-homeomorphic to a linear hyperplane.
A finite collection $\mc A=(H_e)_{e\in E}$ of pseudohyperplanes is called an \defn[arrangement!of (linear) phps]{arrangement of pseudohyperplanes} if the following conditions hold:
\begin{enumerate}
\item $H_A\coloneq \bigcap_{e\in A} H_e$ is a pseudohyperplane of smaller dimension for all $A\subseteq E$.
\item If $H_A\not\subseteq H_e$ for $A\subseteq E, e\in E$ and $H^+_e$ and $H^-_e$ are the two sides of $H_e$, then $H_A\cap H_e$ is a pseudohyperplane in $H_A$ with sides $H_A\cap H_e^+$ and $H_A\cap H_e^-$.
\item The intersection of an arbitrary collection of closed sides is a ball.
\end{enumerate}
\end{definition}

We now define arrangements of \emph{affine} pseudohyperplanes as a generalisation of the above:

\begin{definition}\label{def:affine_phpa}
An \defn[arrangement!of affine p]{arrangement of affine pseudohyperplanes} is a collection $\mc A$ of pseudohyperplanes such that for any $\mc A'\subseteq\mc A$
either $\bigcap_{a\in \mc A'} H_{a}=\emptyset$ or
$\mc A'$ is an arrangement of linear pseudohyperplanes as defined in Definition \ref{def:linear_phpa}.
\end{definition}

\begin{proposition}\label{prop:intersection_of_aff_phss_connected}
The intersection of any number of closed  pseudohalfspaces in an arrangement of affine pseudohyperplanes in $\R^d$ is path-connected.
\end{proposition}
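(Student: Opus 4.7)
The plan is a double induction: outer on the ambient dimension $d$, inner on the number $k$ of halfspaces being intersected. The cases $k = 0$ and $d = 0$ are trivial, so fix $d, k \geq 1$ and assume the statement in dimension $d - 1$ (for all $k$) and in dimension $d$ with fewer than $k$ halfspaces. Set $C = \bigcap_{i=1}^{k} H_{i}^{+}$ and $C' = \bigcap_{i=1}^{k-1} H_{i}^{+}$; the inner hypothesis gives that $C'$ is path-connected.

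First I would dispose of the easy case where $\bigcap_{i=1}^{k} H_{i} \neq \emptyset$: then by Definition \ref{def:affine_phpa} the family $\{H_{1}, \ldots, H_{k}\}$ is itself a \emph{linear} pseudohyperplane arrangement, and condition (3) of Definition \ref{def:linear_phpa} says that $C$ is a ball, and in particular path-connected.

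In the remaining case, pick $p, q \in C$ and a PL path $\gamma \colon [0,1] \to C'$ from $p$ to $q$ that meets $H_{k}$ transversely, so $\gamma^{-1}(H_{k})$ is finite. My plan is to reroute each maximal subinterval on which $\gamma$ lies outside $H_{k}^{+}$: its two endpoints automatically lie in $C' \cap H_{k}$, so the rerouting succeeds provided $C' \cap H_{k}$ is either empty or path-connected. If it is empty, $\gamma$ never meets $H_{k}$; since $p, q \in H_{k}^{+}$, connectedness of $\gamma$ forces $\gamma \subseteq H_{k}^{+}$ and hence $\gamma \subseteq C$, and we are done. Otherwise, I would argue that $\{H_{i} \cap H_{k} : i < k,\ H_{i} \cap H_{k} \neq \emptyset\}$ is an arrangement of affine pseudohyperplanes in $H_{k}$ (which is PL-homeomorphic to $\R^{d-1}$), exhibiting $C' \cap H_{k}$ as an intersection of closed pseudohalfspaces of that restricted arrangement, to which the outer induction hypothesis applies.

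The main obstacle is verifying that the restricted family really is an arrangement of affine pseudohyperplanes on $H_{k}$. Conditions (1) and (2) of Definition \ref{def:linear_phpa} transfer to any subfamily with non-empty common intersection in $H_{k}$, because the corresponding lifted family in $\R^{d}$ together with $H_{k}$ is by assumption a linear PH arrangement. Condition (3), that intersections of closed sides inside $H_{k}$ are balls, reduces to the standard PL fact that the intersection of a PL ball with a PL hyperplane passing through it is again a ball (compare \cite{BLSWZ}). Once this restriction lemma is in hand, the nested induction closes immediately.
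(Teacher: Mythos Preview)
Your approach is essentially the same as the paper's: both argue by induction on the number of halfspaces to obtain a path in $C'=\bigcap_{i<k}H_i^+$, then reroute the excursions outside $H_k^+$ by passing to the restricted arrangement on $H_k$ and invoking induction on the dimension. The only organisational differences are that the paper treats $d=2$ as an explicit base case via a Jordan-curve argument rather than descending further, and that it simply asserts the restriction $\{H_i\cap H_k\}$ is again an affine pseudohyperplane arrangement, whereas you correctly flag this as the point requiring care.
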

\begin{proof}
Let $H_i, 1\leq i\leq n$ be affine \phps in $\R^d$ and denote by $H_i^+$ the corresponding closed pseudohalfspaces.
\medskip

The proof will be done by induction on the number $n$ of \phps, the case where $n=1$ being trivially clear.

Assume $n\geq2$ and choose two points $x,y$ in $\bigcap_{i=1}^n H_i^+$. By induction there is a path $p$ from $x$ to $y$ in $\bigcap_{i=1}^{n-1} H_i^+$. Assume \twlog that whenever $p$ intersects $H_n$, it crosses it. (Otherwise we can modify $p$ to achieve this.)

If $H_n$ does not intersect $p$, we are done since then $p\subseteq \bigcap_{i=1}^n  H_i^+$. If $H_n$ intersects $p$, then it does so an even number of times. (Walking along $p$, at each intersection point we switch between $H_n^+$ and $H_n^-$.) Let $q,q'$ be the first two intersection points. 

We have to find a path $p'$ from $q$ to $q'$ in $\bigcap_{i=1}^n H_i^+$. We will prove the existence of $p'$ by induction on the dimension $d$.

We start with the case $d=2$. \Ie the $H_i$ are $1$-dimensional.
Define $p'$ to be the segment of $H_n$ between $q$ and $q'$.
Then $p'$ lies in $\bigcap_{i=1}^{n-1} H_i^+$. Indeed, assume that there is $1\leq i\leq n-1$ such that $H_i\cap p'\neq\emptyset$. Then $p'$ and the segment of $p$ between $q$ and $q'$ form a PL-$1$-sphere $S$. Since the intersection of $H_i$ and $H_n$ is a crossing, $H_i$ enters the interior of $S$ and hence has to intersect $S$ a second time by the Jordan curve theorem. Since $H_i\cap p=\emptyset$, there is a second intersection point of $H_i$ with $p'$. 
This is a contradiction. 

See Figure \ref{fig:affine_hss} for an illustration.

\begin{figure}[h]
\centering
\includegraphics[height=5cm]{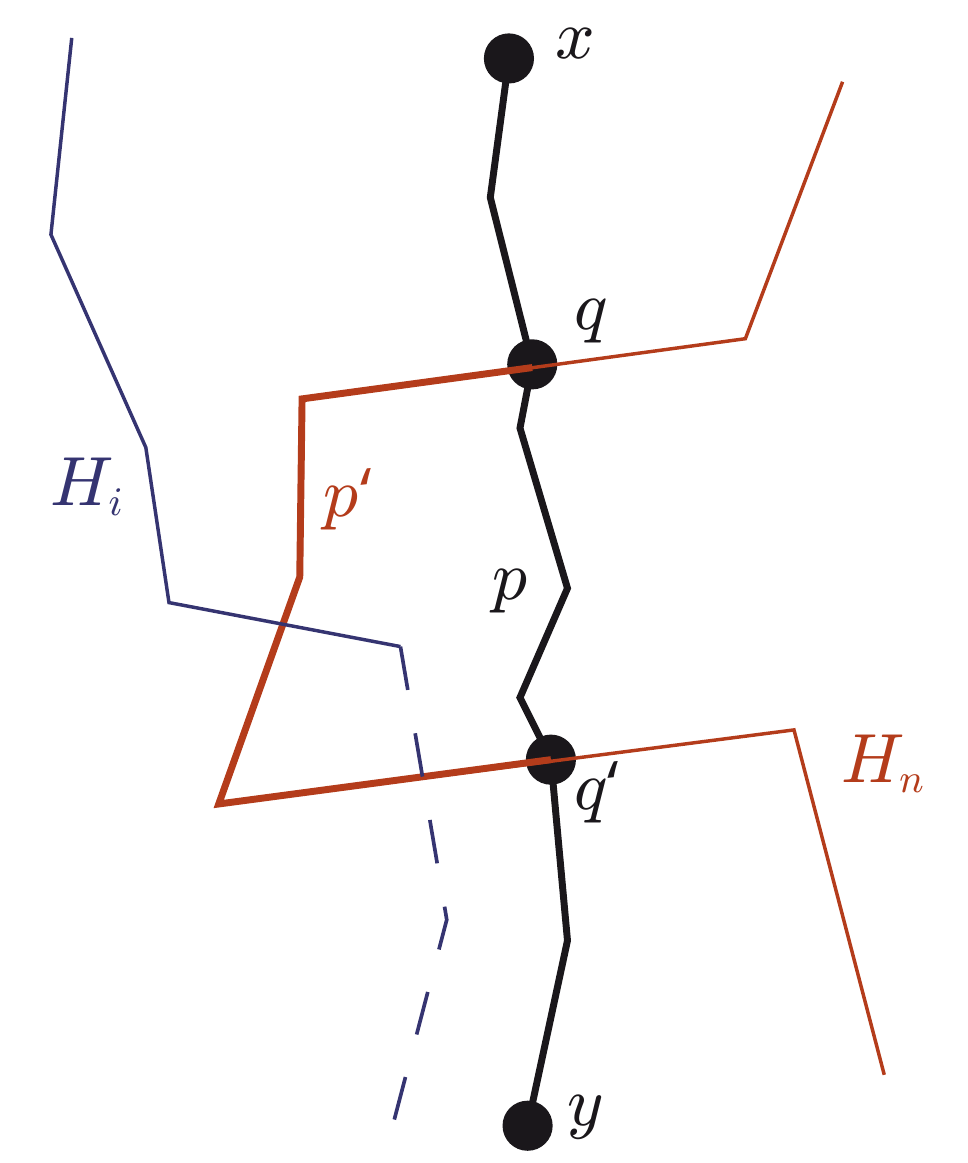}
\caption{The $2$-dimensional situation in the proof of Proposition \ref{prop:intersection_of_aff_phss_connected}.}
\label{fig:affine_hss}
\end{figure}
\medskip
Now assume $d\geq3$.

Denote $H'_i\coloneq H_i\cap H_n$ and $(H'_i)^+\coloneq H_i^+\cap H_n$ for $1\leq i\leq n-1$.  Then $\{H'_i\}$ is an arrangement of affine \phps in $H_n\PLhom\R^{d-1}$ and $q,q' \in \bigcap_{i=1}^{n-1}(H'_i)^+$. By induction this set is path-connected.

Hence there is a path $p'$ from $q$ to $q'$ in $\bigcap_{i=1}^{n-1}(H'_i)^+\subset\bigcap_{i=1}^n H_i^+$. Replace the segment of $p$ between $q$ and $q'$ by $p'$ and continue in the same way for the other intersection points.

\medskip
Thus, we constructed a path from $x$ to $y$ in $\bigcap_{i=1}^n H_i^+$. Since $x$ and $y$ were arbitrary, this proves that $\bigcap_{i=1}^n H_i^+$ is path-connected.
\end{proof}

\subsection{Arrangements of \tphps II}
We now define arrangements of \tphps. Note that a second definition of \tphpas is given in \cite[Definition 4.3]{toprep1}. We will eventually see that both definitions are equivalent.

Let $H$ be a $(d-2)$-dimensional \tphp in $\T^{d-1}$. Then $H$ divides $\T^{d-1}\setminus H$ into $d$ connected components $S_1,\ldots,S_d$, the \defn[sectors!of a \tphp]{open sectors} of $H$. The closure of any union $\bigcup_{i\in I}S_i$ with $\emptyset\neq I\subset [d]$ will be called a \defn[pseudohalfspace!of a \tphp]{(tropical) pseudohalfspace} of $H$. We denote by \[H_I\coloneq\bnd\,{\cl{\bigcup_{i\in I}S_i}}=\bnd\,{\cl{\bigcup_{i\notin I}S_i}}\] the boundary of the pseudohalfspace and by \[H_I^+\coloneq\cl{\bigcup_{i\in I}S_i}\setminus H_I,\quad\text{respectively}\quad H_I^-\coloneq\cl{\bigcup_{i\notin I}S_i}\setminus H_I\] the two open pseudohalfspaces. Note that the boundary $H_I$ of a tropical pseudohalfspace is a (linear) pseudohyperplane with sides $H_I^+$ and $H_I^-$.

An $(n,d)$-halfspace system is a tuple $\mc I=(I_1,\ldots,I_n)$ with $\emptyset\neq I_i\subset [d]$ for each $1\leq i\leq n$. Given a halfspace system $\mc I$ and a collection $\mc A=(H_i)_{i\in[n]}$ of $n$ \tphps we write 
\[\mc A_{\mc I}\coloneq\{H_{i,I_i}\mid 1\leq i\leq n\}.\]

The following definition of \tphpas is motivated by Propositions \ref{prop:elim_iff_connected} and \ref{prop:intersection_of_aff_phss_connected}, \ie by the fact that we want to show that the combinatorial convex of hull of two types is path-connected and know that the intersection of affine pseudohalfspaces is so.
\begin{definition}
\label{def:tphpa2}
An \defn[arrangement!of \tphps]{arrangement of tropical pseudohyperplanes} (in weakly general position) is a collection $\mc A$ of $n$ \tphps in $\T^{d-1}$ such that  $\mc A_{\mc I}$ forms an arrangement of affine pseudohyperplanes as defined in Definition \ref{def:affine_phpa} for every $(n,d)$-halfspace system $\mc I$.
\end{definition}

See Figure \ref{fig:tphpas} for examples of arrangements of \tphps in $\T^3$.
\medskip

\begin{figure}
\centering
\includegraphics[height=3.5cm]{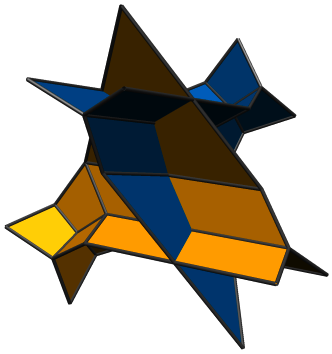}\hspace{1cm}\includegraphics[height=3.5cm]{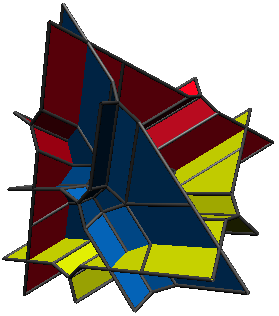}\hspace{1cm}\includegraphics[height=3.5cm]{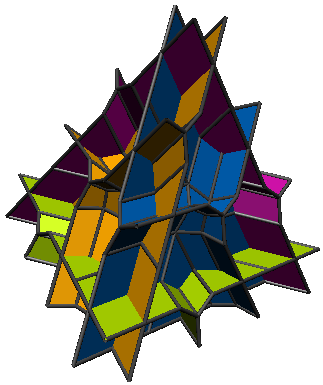}
\caption{Arrangements of $2$-dimensional \tphps that are dual to \mixsds of dilated simplices. The arrangement on the right is non-realisable. The pictures were produced with the \texttt{polymake} extension \texttt{tropmat} \cite{tropmat}.}
\label{fig:tphpas}
\end{figure}

For a set $I\subseteq[n]$ we denote its complement by $\compl I\coloneq[n]\setminus I$.
For a \tphp $H$ and a halfspace $\emptyset\neq I\subset[d]$ we define \[\begin{split}\mc T_I:\mc C(H)&\to\{+,-,0\}\\C&\mapsto\begin{cases}
+ &\tif C\subseteq I,\\ 
-&\tif C\subseteq \compl {I}=[d]\setminus I,\\
0&\other.
\end{cases}\end{split}
\] 
Now let $\mc A$ be a \tphpa and $\mc C(\mc A)$ the induced cell decomposition of $\T^{d-1}$. For $\mc A'\subseteq \mc A$
we define
\[\begin{split}\mc T_{\mc I}:\mc C(\mc A')&\to\{+,-,0\}^{\mc A'}:\\
C&\mapsto (\mc T_{I_i}(C_i))_i\\
\end{split}\] and 
\[\mc L(\mc A',\mc I)\coloneq \{\mc T_{\mc I}(C)\mid C\in\mc C(\mc A')\}.\] 

\begin{proposition}\label{prop:halfspace_system_om}
Let $M$ be a \tom in general position and $S$ its corresponding fine \dnmixsd. Moreover, fix a halfspace system $\mc I$. Then 
\begin{itemize}
\item either $0\nin \mc L(\mc A',\mc I)$ or
\item $(\mc L(\mc A',\mc I),\mc A')$ is an oriented matroid with covectors $\{0,+,-\}^{\card{\mc A'}}$.
\end{itemize}
\end{proposition}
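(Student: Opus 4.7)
The plan is to leverage Definition \ref{def:tphpa2} to produce a linear pseudohyperplane arrangement at a common intersection point, and then invoke the classical topological representation theorem for ordinary oriented matroids.

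Suppose $0\in\mc L(\mc A',\mc I)$. Unwinding the definitions of $\mc T_{I_i}$ and $\mc T_{\mc I}$, there is a cell $C\in\mc C(\mc A')$ with $C_i\not\subseteq I_i$ and $C_i\not\subseteq\compl{I_i}$ for every $i$ indexing a pseudohyperplane of $\mc A'$; this is exactly the statement that $C$ lies on the boundary pseudohyperplane $H_{i,I_i}$ for every such $i$, so $\bigcap_{i\in\mc A'}H_{i,I_i}\supseteq C\neq\emptyset$. Now Definition \ref{def:tphpa2} gives that $\mc A_{\mc I}$ is an arrangement of affine pseudohyperplanes, so restricting to the subcollection indexed by $\mc A'$ (which has nonempty common intersection) Definition \ref{def:affine_phpa} forces $\{H_{i,I_i}\mid i\in\mc A'\}$ to be an arrangement of linear pseudohyperplanes in the sense of Definition \ref{def:linear_phpa}.

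Next I would apply the classical topological representation theorem for linear pseudohyperplane arrangements (Folkman--Lawrence; see \cite{BLSWZ}, Section~5.2): such an arrangement realises an oriented matroid whose covector set is precisely the collection of sign vectors assigned to cells of the induced decomposition. Applied to our situation, this shows that $(\mc L(\mc A',\mc I),\mc A')$ is an oriented matroid.

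It remains to upgrade the covector set to all of $\{0,+,-\}^{\mc A'}$. Here I would use that $M$ is in general position, so the fine $\dnmixsd$ $S$ and its associated \tphpa $\mc A$ have no degenerate intersections: any subcollection of $\mc A$ admitting a common point does so transversally, in particular this forces $\card{\mc A'}\leq d-1$ and makes the induced linear pseudohyperplane arrangement through the intersection point uniform, so every sign pattern is realised by some cell. The main obstacle is exactly this last step: translating the combinatorial general-position hypothesis on $M$ (all type graphs $K_A$ acyclic) into the geometric transversality needed to conclude uniformity of the induced oriented matroid, which will require a careful inspection of the local combinatorics of fine \dnmixsds developed in Section~\ref{sec:mixsds} and of the duality between $S$ and $\mc A$ recorded in Figure~\ref{fig:big_plan}.
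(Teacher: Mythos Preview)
Your argument is circular. You invoke Definition~\ref{def:tphpa2} to conclude that $\mc A_{\mc I}$ is an arrangement of affine pseudohyperplanes, but at this point in the paper we only know that the Poincar\'e dual of $S$ is a \emph{family} of tropical pseudohyperplanes (by \cite[Theorem~4.2]{toprep1}); we do not yet know that this family is an \emph{arrangement} in the sense of Definition~\ref{def:tphpa2}. Establishing that is exactly the content of the Topological Representation Theorem~\ref{thm:toprep2}, and the proof of Theorem~\ref{thm:toprep2} explicitly uses Proposition~\ref{prop:halfspace_system_om}. So you are assuming what the proposition is meant to help prove.

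The paper's proof avoids any geometry and is a two-line combinatorial argument: assuming $0\in\mc L(\mc A',\mc I)$, pick a type $A$ with $\mc T_{\mc I}(A)=0$, so that $A_i$ meets both $I_i$ and $\compl{I_i}$ for every $i\in\mc A'$. For an arbitrary sign vector $X\in\{0,+,-\}^{\mc A'}$, set $B_i$ equal to $A_i$, $A_i\cap I_i$, or $A_i\cap\compl{I_i}$ according to whether $X_i$ is $0$, $+$, or $-$. Because $M$ is in general position, $B\subseteq A$ is a refinement of $A$ and hence a type of $M$ by the surrounding axiom, and $\mc T_{\mc I}(B)=X$. Thus $\mc L(\mc A',\mc I)=\{0,+,-\}^{\mc A'}$, which is trivially the covector set of an oriented matroid (the free one of rank $\card{\mc A'}$). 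No appeal to Folkman--Lawrence or to any pseudohyperplane structure is needed; the general-position hypothesis enters only to guarantee that the subtype $B$ is again a type of $M$.
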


\begin{proof}
Let $\mc L\coloneq \mc L(\mc A',\mc I)$ and assume $0\in\mc L$. (Otherwise there is nothing to prove.)

We  show that $\mc L=\{+,-,0\}^{\mc A'}$.
Choose $A\in \inv{\mc T_{\mc I}}(0)$. Then one can for any $X\in\{+,-,0\}^{\mc A'}$ construct a type $B\subseteq A$ with $\mc T_{\mc I}(B)=X$.
So define $B$ by \[B_i=\begin{cases}A_i&\tif X_i=0\\ A_i\cap \mc I_i&\tif X_i=+\\ A_i\cap \mc \compl{\mc I_i}&\tif X_i=-.\end{cases}\] 
Then $B\subseteq A$ and since $M$ is in general position $B$ is a refinement of $A$. Moreover, $\mc T_{\mc I}(B)=X$.
\end{proof}

\medskip
If $J_i\subseteq [d]$ for each $i\in[n]$ and the $J_i$ are pairwise disjoint then we denote by $J_1\disjoint\ldots\disjoint J_n$ the \emph{partition} of $\bigcup_i J_i$ into the $J_i$.

\medskip
Now let $\mc J=(J_1,\ldots,J_n)$ be an $n$-tuple of partitions of $[d]$. \Ie  $J_i=(J_{i,1}\disjoint\ldots\disjoint J_{i,k_i})$ is a partition of  $[d]$ for each $i\in[n]$.
For a \tom $M$ denote by \[M_{\mc J}\coloneq\{A\in M\mid A_i\cap J_{i,k}\neq\emptyset, i\in[n],k\in[k_i]\}\] 
the set containing all types in $M$ all of whose entries intersect each element in the according partition.
As before, let $\mc I=(I_1,\ldots, I_n)$ be an $n$-tuple of non-empty subsets of $[d]$. Then we denote \[M_{\mc I}\coloneq\{A\in M\mid A_i\subseteq I_i, i\in[n]\}.\]
Finally, we define \[M(\mc I,\mc J)\coloneq M_{\mc I}\cap M_{\mc J}.\]
See Figure \ref{fig:constructible} for an illustration of $M(\mc I,\mc J)$.

\newcommand{\mij}{M(\mc I,\mc J)}

\begin{lemma}\label{lem:mij_connected_pure}
Let $M$ be a \tom in general position.
Then $M(\mc I,\mc J)$,
if non-empty, is connected and pure of dimension $d+n-1-\sum\card{J_i}$.
\end{lemma}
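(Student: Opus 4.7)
The plan splits naturally into two parts. Path-connectedness reduces quickly to the elimination property via Proposition \ref{prop:elim_iff_connected}; purity requires a careful face-existence argument in the fine mixed subdivision, and this is where the main technical obstacle lies.

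For connectedness: since $M$ is in general position its mixed subdivision is fine, so by \cite[Proposition 4.6]{Oh/Yoo} $M$ satisfies the elimination axiom, and Proposition \ref{prop:elim_iff_connected} then gives that the combinatorial convex hull $M_{AB}$ is path-connected for every $A,B\in M$. I will verify that $M_{AB}\subseteq \mij$ whenever $A,B\in \mij$: each $C\in M_{AB}$ satisfies $C_l\in\{A_l,B_l,A_l\cup B_l\}\subseteq I_l$ and contains at least one of $A_l,B_l$, so $C_l\cap J_{l,k}\neq\emptyset$ for every $k$. A path in $M_{AB}$ between $A$ and $B$ is then a path inside $\mij$.

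For purity: I will show that every $A\in \mij$ admits some $A^*\leq A$ coordinatewise with $A^*\in \mij$ and $|A^*_l|=|J_l|$ for every $l$, so that $A^*$ has the required TOM dimension $d+n-1-\sum|J_l|$. For each $l$ pick a transversal $T_l\subseteq A_l$ with $|T_l\cap J_{l,k}|=1$ for each $k$ (possible because $A\in M_{\mc J}$) and set $A^*=(T_1,\ldots,T_n)$. The nontrivial claim is $A^*\in M$, which I would prove by replacing one coordinate of $A$ at a time. For a single replacement (turning $A_{i_0}$ into $T_{i_0}$) the crucial fact is that $K_A$ is a forest because $M$ is in general position: removing the vertex $N_{i_0}$ separates its $D$-neighbors $D_j$, $j\in A_{i_0}$, into pairwise distinct connected components (any two of them were joined by the unique length-two path through $N_{i_0}$). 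Defining $\phi\colon \R^d\to\R$ by $\phi(D_j)=1$ on each component containing some $D_{j'}$ with $j'\in T_{i_0}$, and $\phi(D_j)=0$ otherwise, one obtains a linear functional that is constant on every $A_l$ for $l\neq i_0$ and maximized on $A_{i_0}$ exactly along $T_{i_0}$. The face of the polytope $A_1+\ldots+A_n$ on which $\phi$ is maximized is therefore $A_1+\ldots+T_{i_0}+\ldots+A_n$; being a face of a cell of the fine mixed subdivision, it is itself a cell, and uniqueness of the Minkowski decomposition in the fine case identifies its type as $(A_1,\ldots,T_{i_0},\ldots,A_n)\in M$. Iterating over all coordinates (the forest property is inherited under passage to subgraphs of $K_A$) produces $A^*\in M$, and $A^*\in \mij$ is immediate from the transversal property.

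The hard part is the face-existence step in the purity argument, where one has to exhibit a specific transversal sub-type as an actual cell of the mixed subdivision. The construction of the linear functional $\phi$ relies essentially on the forest structure of $K_A$, i.e.\ on the general-position hypothesis; without it the argument breaks down.
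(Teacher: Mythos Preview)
Your proof is correct and follows essentially the same route as the paper's.

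For connectedness you verify that $M_{AB}\subseteq\mij$ whenever $A,B\in\mij$ and then use path-connectedness of $M_{AB}$, which is exactly the paper's argument. One small redundancy: since $M$ is a \tom by hypothesis, elimination already holds by definition; the detour through Oh--Yoo (fine mixed subdivisions $\Rightarrow$ elimination) is unnecessary.

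For purity, the paper argues in one line that ``since $M$ is in general position we can construct a type $B\subseteq A$ with $|B_i\cap J_{i,k}|=1$ by deleting sufficiently many elements,'' tacitly using that in general position any coordinatewise sub-type of $A$ (with nonempty entries) is again a type of $M$. Your linear-functional construction on the Minkowski cell is a fully explicit verification of precisely this fact, carried out one coordinate at a time; it is a legitimate (and more detailed) substitute for the paper's one-liner. An alternative, closer to the spirit of the paper, is to observe that deleting a single element $j$ from a single entry $A_{i_0}$ is itself a refinement: removing the edge $(N_{i_0},D_j)$ from the forest $K_A$ splits its component into two parts, and the ordered bipartition of $[d]$ given by the $D$-vertices on the $N_{i_0}$-side versus the rest realises exactly this deletion via the surrounding axiom. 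Iterating gives $A^*\in M$.

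One point you left implicit: the upper bound $\dim A\le d+n-1-\sum|J_i|$ (needed for purity, not just ``every cell lies in a top-dimensional one'') follows immediately from $|A_i|\ge|J_i|$ and the forest formula $\dim A = n+d-1-\sum|A_i|$. The paper states this explicitly; you should too.
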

\begin{proof}
We first show that $\mij$ is connected: Let $A,B\in \mij$. Then $A_i, B_i\subseteq I_i$ and $A_i\cap J_{i,k}, B_i\cap J_{i,k}\neq \emptyset$ for each $i\in[n]$ and $k\in[k_i]$. But this implies $A_i\cup B_i\subseteq I_i$ and $(A_i\cup B_i)\cap J_{i,k}\neq\emptyset$.
Hence $\mij$ is convex in the sense of Definition \ref{def:tom_convex} and thus connected by Proposition \ref{prop:elim_iff_connected}.

It remains to show that $\mij$ is pure of the correct dimension. 
Let $A\in \mij$.  Since $A_i\cap J_{i,k}\neq\emptyset$, it follows that $\card{A_i}\geq \card{J_i}$ for each $i$. Hence $\dim A\leq d+n-1-\sum\card{J_i}$.

Since $M$ is in general position we can construct a type $B\subseteq A$ with $\card{B_i\cap J_{i,k}}=1$ for every $i,k$ by deleting sufficiently many elements from the entries of $A$. Then $\dim B=d-1-\sum(\card {J_i}-1)=d+n-1-\sum\card{J_i}$. Since $A$ was arbitrary this shows that any type in $\mij$ is contained in one of  dimension $d+n-1-\sum\card{J_i}$.
\end{proof}

For a cell complex $\mc C$ we denote by $\cl{\mc C}$ its \defn[closure!of a cell complex]{closure}, \ie $\cl{\mc C}$ consists of all cells of $\mc C$ and their faces.

\begin{lemma}\label{lem:mij_manifold}
Let $M, \mc I, \mc J$ as before. Then
$\cl\mij$ is a PL-manifold with boundary.
\end{lemma}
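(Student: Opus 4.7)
The plan is to reduce the PL-manifold-with-boundary property to the classical Topological Representation Theorem of Folkman and Lawrence~\cite{Folkman/Lawrence}, applied locally via the oriented-matroid structure extracted in Proposition~\ref{prop:halfspace_system_om}, together with the ``intersection of closed sides is a ball'' axiom (Definition~\ref{def:linear_phpa}(3)) transferred to the affine setting by Definition~\ref{def:affine_phpa}.

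First I would handle the pure halfspace case where $J_i = \{[d]\}$ for every $i$, so that $M(\mathcal{I},\mathcal{J}) = M_{\mathcal{I}}$ and $\overline{M_{\mathcal{I}}} = M_{\mathcal{I}}$ (any face of a type in $M_{\mathcal{I}}$ stays in $M_{\mathcal{I}}$). Proposition~\ref{prop:halfspace_system_om} says either the sign-vector structure on $M_{\mathcal{I}}$ omits $0$ — in which case $M_{\mathcal{I}}$ is a single combinatorial cell, trivially a PL-ball — or it realises all of $\{0,+,-\}^n$, the covector set of a direct sum of $n$ rank-one oriented matroids. In the second case the classical Folkman--Lawrence theorem identifies $\overline{M_{\mathcal{I}}}$ with an intersection of closed sides in an affine pseudohyperplane arrangement, and such an intersection is a PL-ball by Definition~\ref{def:linear_phpa}(3) together with Definition~\ref{def:affine_phpa}.

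The general case would then follow by induction on $\sum_i(|J_i| - 1)$. Refining a partition $J_i$ by splitting a block $J_{i,k}$ as $J'_{i,k} \sqcup J''_{i,k}$ restricts $\overline{M(\mathcal{I},\mathcal{J})}$ to the subcomplex of types additionally satisfying $A_i \cap J'_{i,k} \neq \emptyset$ and $A_i \cap J''_{i,k} \neq \emptyset$; the cells lost in the refinement — those with $A_i \subseteq I_i \setminus J'_{i,k}$ or $A_i \subseteq I_i \setminus J''_{i,k}$ — are themselves of the form $\overline{M(\mathcal{I}'',\mathcal{J}'')}$ for tighter halfspace systems and fewer partition constraints, and so are PL-manifolds of one lower dimension by the inductive hypothesis, providing the boundary strata. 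The hard part will be the local (link) analysis: verifying that the link of every cell in $\overline{M(\mathcal{I},\mathcal{J})}$ is a PL-sphere at interior cells and a PL-disk at boundary cells. General position (so that $S$ is fine and each $K_A$ is acyclic) keeps stars combinatorially simple, and applying Proposition~\ref{prop:halfspace_system_om} locally identifies the star of each cell with the cell complex of a classical oriented matroid whose regions are PL-balls by Folkman--Lawrence; gluing these local models along their common boundaries — with the requisite connectivity supplied by Lemma~\ref{lem:mij_connected_pure} and Proposition~\ref{prop:intersection_of_aff_phss_connected} — assembles the global manifold-with-boundary structure.
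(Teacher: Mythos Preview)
Your proposal has a genuine circularity problem. You want to invoke Definition~\ref{def:linear_phpa}(3) (``the intersection of an arbitrary collection of closed sides is a ball'') together with Definition~\ref{def:affine_phpa}, but those are \emph{axioms} for an arrangement of affine pseudohyperplanes, not theorems. To use them you would first have to know that the tropical halfspace boundaries coming from $M$ already form such an arrangement --- and that is precisely Theorem~\ref{thm:toprep2}, whose proof relies on Lemma~\ref{lem:mij_manifold} (via Proposition~\ref{prop:mij_ball} and Corollary~\ref{cor:tom_types_balls}). So the logical flow of the paper is Lemma~\ref{lem:mij_manifold} $\Rightarrow$ ball $\Rightarrow$ arrangement axioms hold, not the reverse.

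There is a second gap even if one tried to patch the circularity by appealing only to the abstract Folkman--Lawrence theorem. Proposition~\ref{prop:halfspace_system_om} tells you that the \emph{image} $\mc L(\mc A',\mc I)=\{0,+,-\}^{|\mc A'|}$ is the covector set of an oriented matroid, and Folkman--Lawrence realises that oriented matroid by \emph{some} pseudosphere arrangement. But the map $\mc T_{\mc I}$ from cells of $M$ to sign vectors is far from injective: each fibre $\mc T_{\mc I}^{-1}(X)$ is an entire subcomplex (this is exactly what Corollary~\ref{cor:tom_types_balls} later computes). Hence the face poset of $\cl{M_{\mc I}}$ is strictly finer than the covector lattice, and knowing that the latter is realised by a ball says nothing a priori about the PL type of the former. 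Your inductive step inherits the same issue.

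The paper's argument avoids all of this by working purely locally and combinatorially: for a cell $T$ one computes $\lk_{M_{\mc J}}T$ directly. General position makes the contributions of the different blocks $J_{ik}$ independent, so the link is the boundary of a product of simplices --- a PL-sphere by inspection, with no oriented-matroid machinery needed. Passing from $M_{\mc J}$ to $\cl{\mij}$ cuts this sphere by a further $M_{\mc J'}$, and \cite[Lemma~5.1.1]{BLSWZ} then gives a PL-ball for the link at boundary cells. This is both more elementary and logically prior to any Folkman--Lawrence-type statement in the paper.
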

\begin{proof}
Denote $\mc M\coloneq \cl\mij$ and $\mc M'\coloneq M_{\mc J}$. Choose a cell $T\in\mc M$.
We first investigate the link $\lk_{\mc M'}T$.
The cells in $\lk_{\mc M'} T$ correspond to the cells in the star $\st_{\mc M'} T=\{C\in\mc M'\mid C\subseteq T\}$ and hence to 
certain refinements of $T$. 
First assume that $n=1=k_1$, \ie $\mc J=(J_1=(J_{11}))$. Then the cells in $\st_{\mc M'}T$ are in bijection with the proper subsets of $J_{11}\cap T_1$ ordered by reverse inclusion. Hence $\lk_{\mc M'}T$ is the boundary of a simplex of dimension $\card {(J_{11}\cap T_1)}-1$ (whose facets are labelled by $J_{11}\cap T_1$).

Since $M$ is in general position we can consider the $J_{ik}$ (for $i\in[n],k\in[k_i]$) independently. \Ie in general, $\lk_{\mc M}T$ is the boundary of a product of simplices (one for each $J_{ik}$) and hence a PL-sphere.\index{PL!sphere} Denote this sphere by $\mc S(T)$. See Figures \ref{fig:constructible_b} and \subref{fig:constructible_c} for an example.

\medskip
If in each position $i$ there is some $J_{ik}$ with $J_{ik}\cap T_i\subseteq I_i$ then $T$ is contained in the interior of $\mc M$ and $\lk_{\mc M}T=\mc S(T)$. Otherwise denote by $\mc B(T)$ the set of all faces of $\mc S(T)$ that do \emph{not} belong to $\lk_{\mc M}T$. Then define $\mc J'$ by replacing each $J_{i}$ in $\mc J$ by $(I_i\disjoint (J_{i1}\cap \compl {I_i})\disjoint\ldots\disjoint (J_{ik_i}\cap\compl {I_i}))$. 
Then $\cl{\mc B(T)}\cap\lk_{\mc M}T=M_{\mc J'}$ is a PL-sphere in $\mc S(T)$ with sides $\mc B(T)$ and $\lk_{\mc M}T$. By \cite[Lemma 5.1.1]{BLSWZ} this implies that $\lk_{\mc M}T$ is a PL-ball.

It remains to show that $\mc M$ has a boundary. If there is a cell $T$ whose link is a ball we are done.
Otherwise -- unless $\mc M$ consists of a single point -- we can always construct a cell in $\mc M$ whose dual (in the \mixsd corresponding to $M$) is contained in the boundary of $\dilsimp n{d-1}$. (Note that we have to view $\mc M$ as a manifold in $\TP^{d-1}$ for it to be compact.) Indeed the cells in the boundary of $\dilsimp n{d-1}$ are characterised by the fact that their types are unbounded, \ie there is some $i\in[n]$ not contained in any position of the type. The only situation, however, when $i\in[n]$ is contained in any cell in $\mc M$ is when any $J_i$ contains a singleton $\{i\}$. If this holds for every $i$ then $\mc M$ consists of one point only.
\end{proof}

\subsection{Constructibility}
In the proof of the Topological Representation Theorem for classical oriented matroids given in \cite{BLSWZ}, the \defn{shellability} of certain complexes plays a crucial role. In particular, the fact that a shellable PL-manifold is either a ball or a sphere is used in order to show that the subcomplexes which one would like to be pseudospheres actually are pseudospheres.

In the proof of the tropical analogue, we are going to apply a related but weaker notion, namely that of constructibility.

The notion of constructibility of a polytopal complex goes back to Hochster \cite{Hochster}.

\begin{definition}\label{def:constructible}
A polyhedral $d$-complex $C$ is \defn{constructible} if 
\begin{itemize}
\item 
$C$ consists of only one cell or
\item
$C=C_1\cup C_2$, where $C_1, C_2$ are $d$-dimensional constructible complexes and $C_1\cap C_2$ is a $(d-1)$-dimensional constructible complex.
\end{itemize}
\end{definition}

\begin{proposition}\label{prop:mij_constructible}
Let $M$, $\mc I,\mc J$ as before. 
Then $M(\mc I,\mc J)$ is constructible.
\end{proposition}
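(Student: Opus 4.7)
The plan is to induct on the parameter $T(\mc I, \mc J) := \sum_{i=1}^n (\card{I_i} - k_i)$, where $k_i$ denotes the number of parts of the partition $J_i$; without loss of generality assume $I_i \cap J_{i,k} \neq \emptyset$ for every $i, k$ (otherwise $\mij = \emptyset$ and there is nothing to prove). The base case $T = 0$ forces $\card{I_i \cap J_{i,k}} = 1$ for all $i, k$, so $\mij$ contains the unique type $(I_1, \ldots, I_n)$; hence $\cl{\mij}$ is a single top-dimensional closed cell, which is constructible by the first clause of Definition \ref{def:constructible}.

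For the inductive step $T \geq 1$, pick indices $i_0, k_0$ with $\card{I_{i_0} \cap J_{i_0, k_0}} \geq 2$ and an element $j_0 \in I_{i_0} \cap J_{i_0, k_0}$. Form three modified data: let $\tilde{\mc I}^{(1)}$ replace $I_{i_0}$ by $(I_{i_0} \setminus J_{i_0, k_0}) \cup \{j_0\}$; let $\mc I^{(2)}$ replace $I_{i_0}$ by $I_{i_0} \setminus \{j_0\}$; and let $\mc J^{(0)}$ refine $J_{i_0}$ by splitting the part $J_{i_0, k_0}$ into $\{j_0\}$ and $J_{i_0, k_0} \setminus \{j_0\}$. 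A direct count gives the $T$-values $T - (\card{I_{i_0} \cap J_{i_0, k_0}} - 1)$, $T-1$ and $T-1$ respectively, each strictly less than $T$. Moreover, by Lemma \ref{lem:mij_connected_pure}, $M(\tilde{\mc I}^{(1)}, \mc J)$ and $M(\mc I^{(2)}, \mc J)$ are pure of the same dimension $\delta := d+n-1-\sum \card{J_i}$ as $\mij$, while $M(\mc I, \mc J^{(0)})$ is pure of dimension $\delta - 1$.

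The heart of the argument is the decomposition $\cl{\mij} = C_1 \cup C_2$ with $C_1 := \cl{M(\tilde{\mc I}^{(1)}, \mc J)}$, $C_2 := \cl{M(\mc I^{(2)}, \mc J)}$, and $C_1 \cap C_2 = \cl{M(\mc I, \mc J^{(0)})}$. The union claim rests on a trichotomy for top-dimensional cells of $\mij$: for such an $A$ the singleton $A_{i_0} \cap J_{i_0, k_0}$ either equals $\{j_0\}$ (placing $A$ in $M(\tilde{\mc I}^{(1)}, \mc J)$) or is disjoint from $\{j_0\}$ (placing $A$ in $M(\mc I^{(2)}, \mc J)$); passing to closures then covers $\cl{\mij}$ thanks to purity. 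The intersection claim is: a cell $B$ lies in both closures exactly when it is a face of cells from both $M(\tilde{\mc I}^{(1)}, \mc J)$ and $M(\mc I^{(2)}, \mc J)$, which simultaneously forces $j_0 \in B_{i_0}$ and $B_{i_0} \cap (J_{i_0, k_0} \setminus \{j_0\}) \neq \emptyset$, so $B \in \cl{M(\mc I, \mc J^{(0)})}$.

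Once these identities are established, the three pieces are all constructible by the induction hypothesis, and the decomposition fits exactly into the recursive clause of Definition \ref{def:constructible}, yielding constructibility of $\cl{\mij}$. The main technical obstacle is the converse half of the intersection identification: for a given $B \in \cl{M(\mc I, \mc J^{(0)})}$ one needs to construct two top-dimensional refinements that respectively single out $j_0$ and a different element of $J_{i_0, k_0}$, so as to witness membership of $B$ in both $C_1$ and $C_2$. This is where the general position hypothesis enters essentially, via the acyclicity of $K_A$ for every $A \in M$ that already underlies the free-choice refinement argument used in the proof of Lemma \ref{lem:mij_connected_pure}.
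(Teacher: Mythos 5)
Your argument follows the same overall strategy as the paper's proof: decompose $\cl{M(\mc I,\mc J)}$ into two subcomplexes of the same shape whose intersection is again of this shape with one part of some $J_i$ split, use Lemma \ref{lem:mij_connected_pure} for connectedness, purity and dimensions, and recurse via Definition \ref{def:constructible}. The difference lies in how the split is chosen. The paper anchors it at two distinct maximal cells $A,B$, takes the elements $a,b$ they select in a part $J_{k,\ell}$ and removes $a$, respectively $b$, from $I_k$; you pick an arbitrary $j_0\in I_{i_0}\cap J_{i_0,k_0}$ (this intersection having size at least two) and split asymmetrically into ``keep only $j_0$ inside this part'' versus ``delete $j_0$'', with the explicit induction measure $\sum_i(\card{I_i}-k_i)$. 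Your asymmetric split is in fact the cleaner choice: under the paper's symmetric removal a maximal cell selecting a third element $c\neq a,b$ of $I_k\cap J_{k,\ell}$ lies in \emph{both} pieces, so their intersection can contain top-dimensional cells, whereas in your version membership in $C_1$ forces $j_0\in B_{i_0}$ and membership in $C_2$ forces an element of $J_{i_0,k_0}\setminus\{j_0\}$, which is exactly what characterises $\cl{M(\mc I,\mc J^{(0)})}$.

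Two points should still be patched. First, since $j_0$ is chosen without reference to actual cells, one of $M(\tilde{\mc I}^{(1)},\mc J)$, $M(\mc I^{(2)},\mc J)$ may be empty; then the two-piece clause of Definition \ref{def:constructible} does not literally apply, but $\cl{M(\mc I,\mc J)}$ then coincides with the closure of the other piece (by purity) and the induction hypothesis finishes the step; when both pieces are nonempty, the intersection is nonempty because $\cl{M(\mc I,\mc J)}$ is connected by Lemma \ref{lem:mij_connected_pure}. Second, in the inclusion $C_1\cap C_2\subseteq\cl{M(\mc I,\mc J^{(0)})}$ the final ``so'' also needs the general-position refinement, not only the direction you flag: a cell $B$ of the closure need not satisfy $B_i\subseteq I_i$, so one must exhibit a type $C\subseteq B$ of $M$ with $C_i\subseteq I_i$ meeting every part of $\mc J^{(0)}$ -- for instance $C_i=B_i\cap I_i$, which contains the witnesses coming from the two pieces and lies in $M$ by the same acyclicity argument as in Proposition \ref{prop:halfspace_system_om}. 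With these remarks your proof is complete and, in the handling of the intersection, somewhat more precise than the paper's.
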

\begin{proof}
We are done if $\mij$ consists of one (maximal) cell only. Otherwise there are two maximal cells $A$ and $B$. By  Lemma \ref{lem:mij_connected_pure} above (and the fact that $A,B$ are maximal) we then have $\card{A_i}=\card{B_i}$ and $\card{A_i\cap J_{i,j}}=\card{B_i\cap J_{i,j}}=1$ for every $i$ and $j$. 

There is some position $k$ where $A$ and $B$ differ. Moreover,
there is some $\ell$ with $J_{k,\ell}\cap A_k\neq J_{k,\ell}\cap B_k$. Let $a\in J_{k,\ell}\cap A_k, b\in J_{k,\ell}\cap B_k$. (Note that $a$ and $b$ are unique.)

Now form $\mc J_0$ by splitting $J_{k,\ell}$ so that $a$ and $b$ are in different sets. Moreover, form $\mc I_1,\mc I_2$ by 
removing $a$, respectively $b$ from $I_k$.
Then $\cl\mij=\cl{M(\mc I_1,\mc J)}\cup \cl{M(\mc I_2,\mc J)}$ 
and $\cl{M(\mc I_1,\mc J)}\cap \cl{M(\mc I_2,\mc J)}=\cl{M(\mc I,\mc J_0)}$. Moreover, $A\in \cl{M(\mc I_1,\mc J)}, B\in \cl{M(\mc I_2,\mc J)}$.
By the above lemma, $M(\mc I_1,\mc J), M(\mc I_2,\mc J), M(\mc I,\mc J_0)$ are connected and pure and of the right dimensions. 
By induction these three sets are constructible and hence $\cl\mij$ is constructible.
\end{proof}

See Figure \ref{fig:constructible} for an illustration.
\bigskip
\begin{figure}[t]
\centering
\subfigure[A $2$-dimensional \tphp. The $2$-faces are labelled by their types.]{\includegraphics[width=4.3cm]{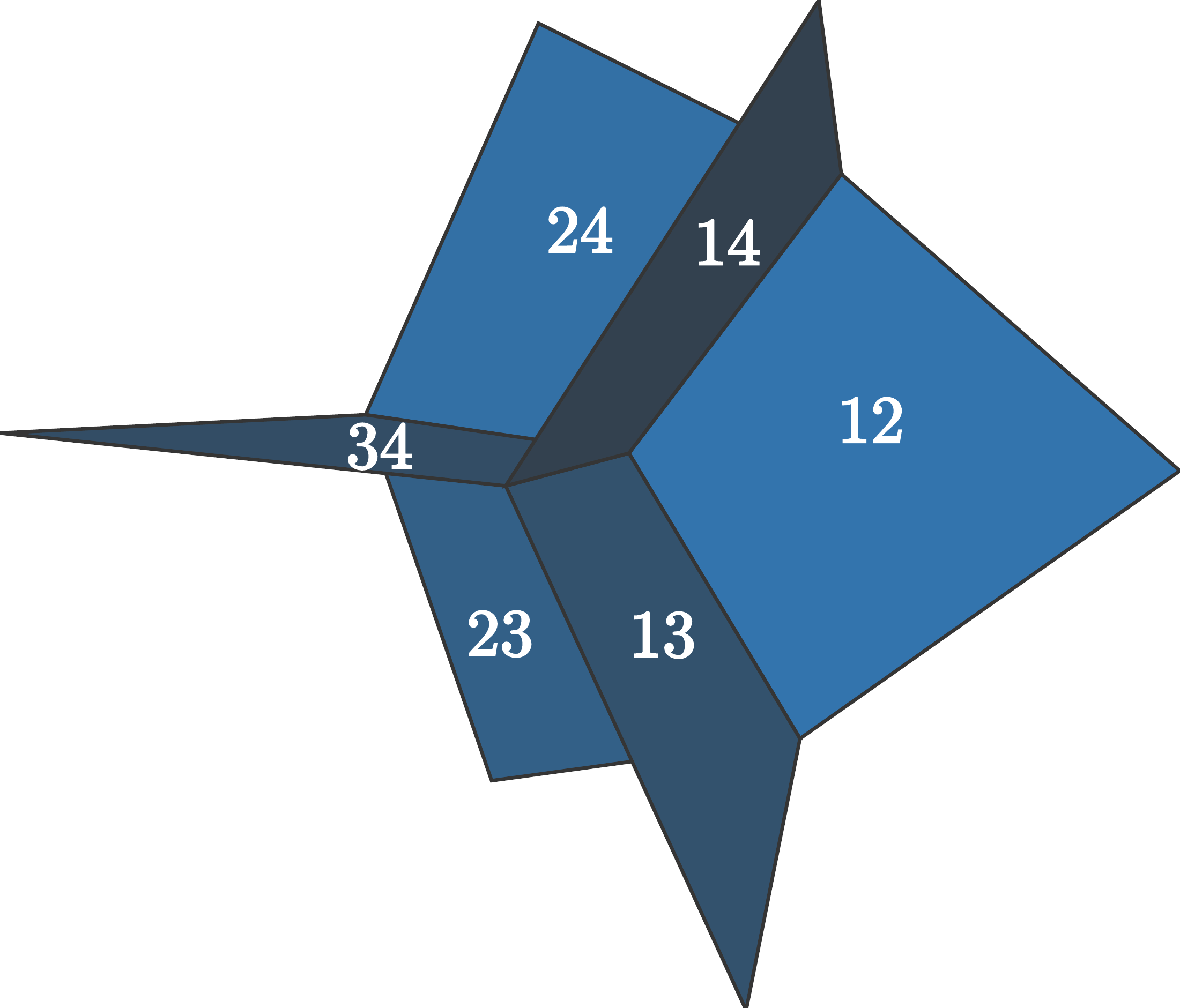}\label{fig:constructible_a}}
\qquad
\subfigure[The subcomplex $\mij$ for $\mc I={[4]},\ \mc J=(14 \disjoint 23)$ -- a $2$-dimensional PL-ball. The link of $1234$ is drawn in light grey.]{\includegraphics[width=4.3cm]{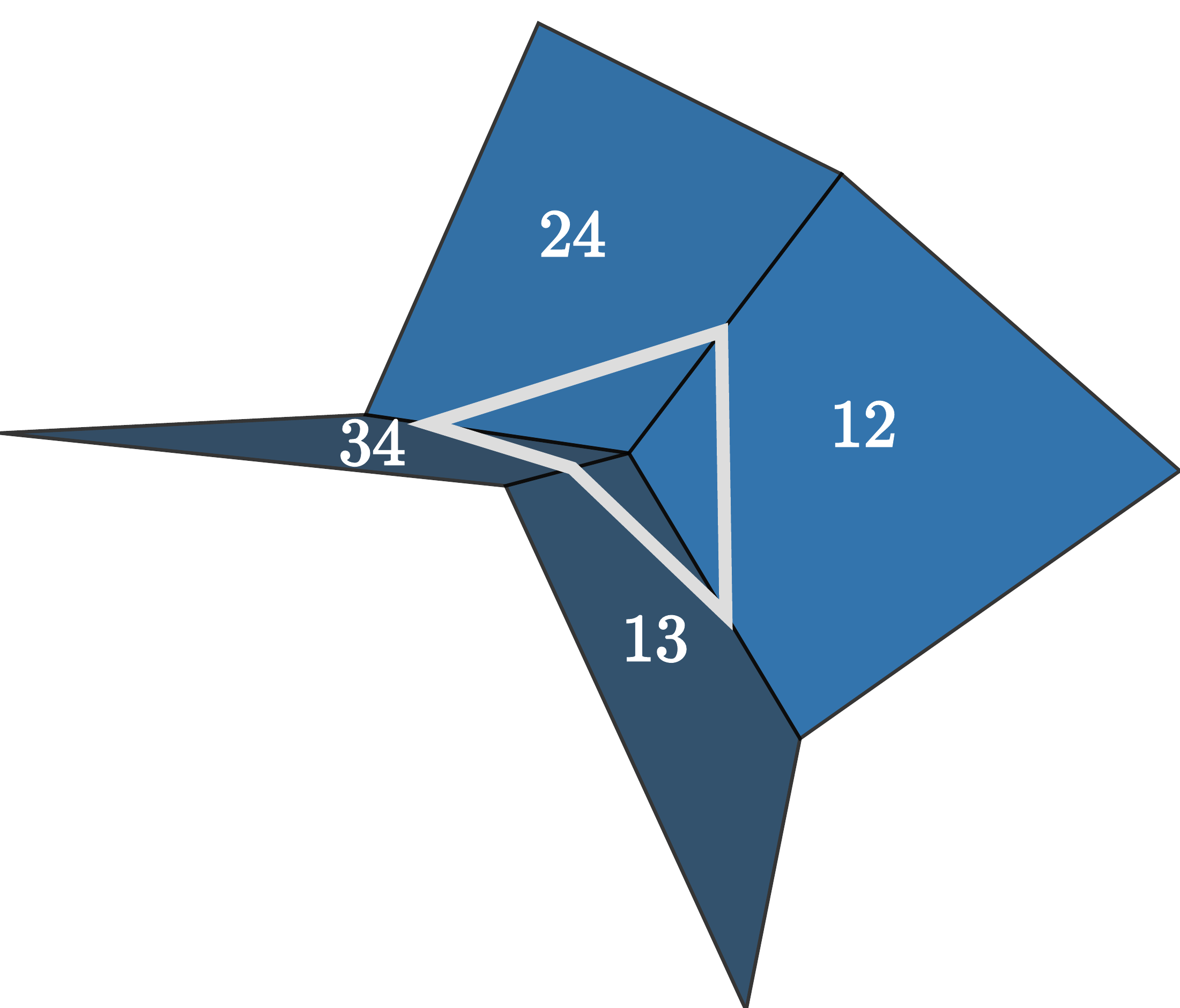}\label{fig:constructible_b}}
\qquad\subfigure[The subcomplex $M(\mc I,\mc J_0)$ for $\mc I={[4]},\ \mc J_0=(1\disjoint 4 \disjoint 23)$ -- a $1$-dimensional PL-ball -- and its sides $M(\mc I_1, \mc J)$ and $M(\mc I_2,\mc J)$.]{\label{fig:constructible_c}\includegraphics[width=4.3cm]{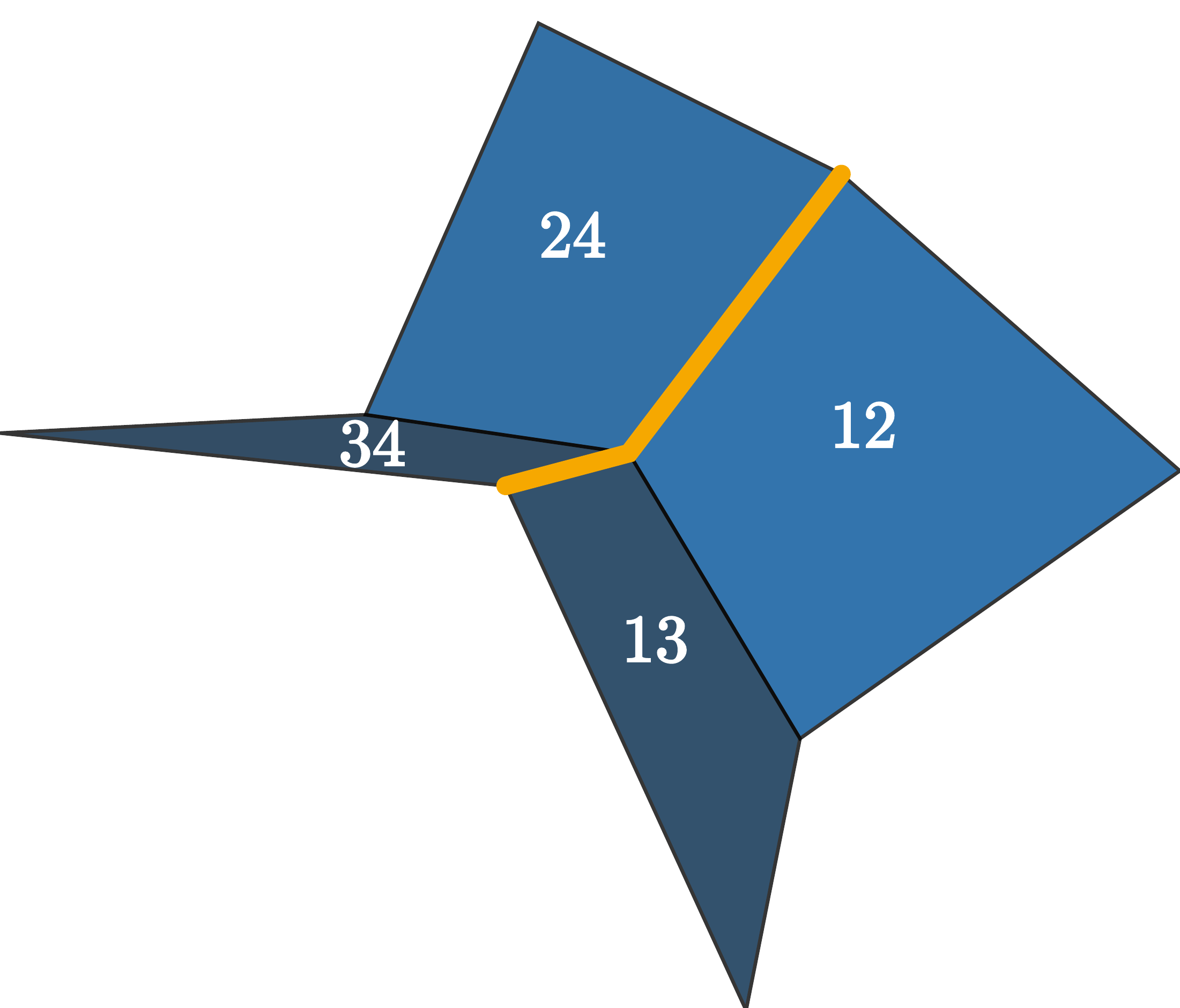}}
\caption{Assume in the proof of Proposition \ref{prop:mij_constructible} we have $n=1,d=4$, \ie we are dealing with a $2$-dimensional \tphp as depicted in Figure \subref{fig:constructible_a}. Moreover, assume we have $\mij$ with $\mc I=[4], \mc J=(14\disjoint 23)$. The complex $\mij$ is depicted in Figure \subref{fig:constructible_b}.\newline 
Now let $A=13,B=24$. As in the proof we see that $\card{A_1}=\card{B_1}$ and $\card{A_1\cap J_{1i}}=\card{B_1\cap J_{1j}}=1$ for every $i$ and $j$. We have $k=1$ and we may choose $\ell=1$. Then we get $a=1,b=4$ as the unique elements in $A_1\cap J_{11},B_1\cap J_{11}$. We form $\mc J_0=(1\disjoint 4\disjoint 23)$ by splitting $J_{k\ell}=14$. Moreover, we set $\mc I_1=234$ and $\mc I_2=123$. This situation is depicted in Figure \subref{fig:constructible_c}.
}
\label{fig:constructible}
\end{figure}

The above lemmas together with a theorem by Zeeman (\cite{Zeeman}, ``A constructible manifold with a boundary is a ball.'') yield:
\begin{proposition}\label{prop:mij_ball}
Let $M$ be a \tom in general position.
Then $\mij$ is a PL-ball.
\end{proposition}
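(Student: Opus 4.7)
The plan is essentially to assemble the three preceding results and invoke Zeeman's theorem. By Lemma \ref{lem:mij_connected_pure}, the complex $\mij$ is connected and pure of dimension $d+n-1-\sum\card{J_i}$; by Lemma \ref{lem:mij_manifold}, its closure $\cl{\mij}$ is a PL-manifold with boundary; and by Proposition \ref{prop:mij_constructible}, $\cl{\mij}$ is constructible in the sense of Definition \ref{def:constructible}. Zeeman's theorem \cite{Zeeman} states that any constructible PL-manifold with non-empty boundary is a PL-ball, which yields the claim immediately.

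The only thing that really needs checking is that the three preceding results jointly verify the hypotheses of Zeeman's theorem in the form we need. Purity of the expected dimension is what makes the manifold statement meaningful in the first place (otherwise the notion of link of a maximal cell would not behave uniformly). The manifold-with-boundary property was established in Lemma \ref{lem:mij_manifold} by showing that the link of every cell $T \in \cl{\mij}$ is either the full product-of-simplices sphere $\mc S(T)$ (when $T$ lies in the interior) or a PL-ball (when $T$ lies on the boundary). The existence of at least one boundary cell -- and hence the fact that $\cl{\mij}$ is a manifold \emph{with} boundary rather than without -- was the subtle point: if $\mij$ is not a single point, then there is a cell of $\mij$ whose dual is in $\bnd\dilsimp n{d-1}$, producing a cell whose link is a ball.

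Constructibility was established in Proposition \ref{prop:mij_constructible} by a splitting argument: pick two maximal cells $A,B$ differing in a coordinate $k$ and a block $J_{k,\ell}$, split $J_{k,\ell}$ to produce $\mc J_0$, and restrict $I_k$ by removing the two critical elements to produce $\mc I_1,\mc I_2$, giving the decomposition $\cl{\mij} = \cl{M(\mc I_1,\mc J)} \cup \cl{M(\mc I_2,\mc J)}$ with intersection $\cl{M(\mc I,\mc J_0)}$ of the appropriate codimension. All three pieces are again of the form $M(\cdot,\cdot)$, so induction applies.

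I do not expect any real obstacle here -- all the genuinely non-trivial work has already been done in the preceding lemmas. The sole thing to verify is that $\mij$ (viewed in $\TP^{d-1}$ so that it is compact, as remarked in the proof of Lemma \ref{lem:mij_manifold}) fits the PL-framework in which Zeeman's theorem is formulated. This is straightforward since $\cl{\mij}$ is a subcomplex of a polyhedral subdivision of a polytope, and all links encountered are either boundaries of products of simplices or PL-balls inside such boundaries. Invoking Zeeman then finishes the proof.
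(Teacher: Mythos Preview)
Your proposal is correct and follows essentially the same approach as the paper: assemble Lemma~\ref{lem:mij_connected_pure} (purity), Lemma~\ref{lem:mij_manifold} (PL-manifold with boundary), and Proposition~\ref{prop:mij_constructible} (constructibility), then invoke Zeeman's theorem. The paper's proof is just a two-sentence version of what you wrote; your additional recap of how the preceding lemmas were proved is accurate but not needed here.
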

\begin{proof}
$\cl\mij$ is constructible and pure of dimension $d+n-1-\sum\card{J_i}$ by Lemma \ref{lem:mij_connected_pure} and Proposition \ref{prop:mij_constructible}.

By Lemma \ref{lem:mij_manifold}, $\cl\mij$ is a PL-manifold with boundary and hence a PL-ball by Zeeman's theorem.
\end{proof}

\begin{corollary}\label{cor:tom_types_balls}
Let $M$ be a \tom in general position and $S$ its corresponding fine \dnmixsd. Moreover, choose a halfspace system $\mc I$ and $X\in\{+,-,0\}^n$. Then $\inv{\mc T_{\mc I}}(X)$ is a PL-ball of dimension $d-1-\card {z(X)}$, where $z(X)$ denotes the zero set of $X$.
\end{corollary}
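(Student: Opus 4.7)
The plan is to reduce this directly to Proposition \ref{prop:mij_ball} by expressing $\inv{\mc T_{\mc I}}(X)$ as $M(\mc I', \mc J')$ for suitably chosen $\mc I'$ and $\mc J'$. This is essentially a bookkeeping argument: the sign vector $X$ encodes, in each coordinate, whether the type is forced into $I_i$, into $\compl{I_i}$, or straddles both sides of $H_{i,I_i}$, and each of these conditions is readily phrased in the $(\mc I,\mc J)$-language.

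Concretely, for each $i \in [n]$ I would define
\[
I'_i \coloneq \begin{cases} I_i & \tif X_i = +,\\ \compl{I_i} & \tif X_i = -,\\ [d] & \tif X_i = 0,\end{cases}
\qquad
J'_i \coloneq \begin{cases} ([d]) & \tif X_i = \pm,\\ (I_i \disjoint \compl{I_i}) & \tif X_i = 0.\end{cases}
\]
By unwinding Definition of $\mc T_I$, a cell $A$ belongs to $\inv{\mc T_{\mc I}}(X)$ precisely when $A_i \subseteq I'_i$ for every $i$, and additionally (when $X_i=0$) $A_i$ meets both $I_i$ and $\compl{I_i}$. These are exactly the conditions defining $M(\mc I',\mc J')$, so $\inv{\mc T_{\mc I}}(X) = M(\mc I',\mc J')$.

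Assuming the set is non-empty, Proposition \ref{prop:mij_ball} then immediately yields that $\inv{\mc T_{\mc I}}(X)$ is a PL-ball. For the dimension I would apply the formula from Lemma \ref{lem:mij_connected_pure}: $\card{J'_i} = 1$ when $X_i \neq 0$ and $\card{J'_i} = 2$ when $X_i = 0$, so $\sum_i \card{J'_i} = n + \card{z(X)}$ and the dimension comes out to
\[
d + n - 1 - \sum_i \card{J'_i} = d - 1 - \card{z(X)},
\]
as required.

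There is no real obstacle here; the only thing worth checking carefully is that $\inv{\mc T_{\mc I}}(X)$ really does coincide with $M(\mc I',\mc J')$ as a subcomplex of $S$ (not just set-theoretically on maximal cells), so that the conclusions of Proposition \ref{prop:mij_ball} transfer. This is forced by the general-position assumption on $M$: refinements of cells in $M(\mc I',\mc J')$ all lie in $\mij$ as well (this was already used in the proof of Proposition \ref{prop:halfspace_system_om}), and conversely any cell meeting the sign-vector conditions of $X$ satisfies both the $\mc I'$-containment and the $\mc J'$-intersection constraints.
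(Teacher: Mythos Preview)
Your proposal is correct and matches the paper's proof almost verbatim: the paper defines exactly the same $\mc I'$ and $\mc J$ (your $\mc J'$) and then invokes Proposition~\ref{prop:mij_ball}. Your added remarks on the dimension count and on the identification as subcomplexes are fine elaborations that the paper leaves implicit.
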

\begin{proof}
Define $\mc I'=(I_1',\ldots,I_n')$ by \[I_i'\coloneq\begin{cases} I_i&\tif X_i=+,\\ \compl{I_i}&\tif X_i=-,\\ [d] &\tif X_i=0\end{cases}\] and $\mc J=(J_1,\ldots,J_n)$ by \[J_i\coloneq \begin{cases}[d] &\tif X_i\in\{+,-\},\\ I_i\disjoint \compl {I_i}&\tif X_i=0.\end{cases}\]

Then $\inv{\mc T_{\mc I}}(X)=M(\mc I',\mc J)$ and hence the claim follows from Proposition \ref{prop:mij_ball}.
\end{proof}

We are now ready to prove the following version of the Topological Representation Theorem for \toms:

\begin{theorem}\label{thm:toprep2}
Every \tom in general position can be realised by an \atphp as in Definition \ref{def:tphpa2}.
\end{theorem}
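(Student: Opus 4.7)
Starting from a \tom $M$ in general position, I would pass to the corresponding fine \dnmixsd $S$ and apply \cite[Theorem 4.2]{toprep1}, which identifies the Poincar\'e dual of $S$ with a family $\mc A=(H_1,\ldots,H_n)$ of $n$ \tphps in $\T^{d-1}$ whose induced cell decomposition realises the types of $M$. The entire content of the theorem then reduces to verifying that this family satisfies the new Definition \ref{def:tphpa2}: for every $(n,d)$-halfspace system $\mc I$, the boundary collection $\mc A_{\mc I}=(H_{i,I_i})_{i\in[n]}$ is an arrangement of affine pseudohyperplanes in the sense of Definition \ref{def:affine_phpa}.

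Unfolding Definition \ref{def:affine_phpa}, I fix $\mc I$ and an arbitrary subfamily $\mc A'\subseteq\mc A_{\mc I}$, and must show that either $\bigcap_{a\in\mc A'}H_a=\emptyset$ or $\mc A'$ is an arrangement of linear pseudohyperplanes (Definition \ref{def:linear_phpa}). The key dichotomy is supplied by Proposition \ref{prop:halfspace_system_om} applied to $\mc A'$ with the restricted halfspace system: if $0\notin\mc L(\mc A',\mc I)$, then no cell lies simultaneously on every $H_{i,I_i}$ for $i$ in the index set of $\mc A'$, and the common intersection of boundaries is empty; otherwise $\mc L(\mc A',\mc I)=\{+,-,0\}^{\mc A'}$, so every sign pattern is combinatorially realised.

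In the non-degenerate case I verify the three axioms of Definition \ref{def:linear_phpa} by reading them off Corollary \ref{cor:tom_types_balls}. For any $B\subseteq \mc A'$ and any sign vector $X$ with $z(X)\supseteq B$, the preimage $\mc T_{\mc I}^{-1}(X)$ is a PL-ball of dimension $d-1-|z(X)|$. Taking $X$ identically zero on $B$ and free on the complement, the preimages of the $\pm$-fillings decompose the ambient space into PL-balls meeting exactly in $H_B=\bigcap_{e\in B}H_e$, so $H_B$ is PL-homeomorphic to a hyperplane of the correct codimension (axiom 1); the same reasoning restricted to $H_B$ and applied to one further $H_e\not\supseteq H_B$ yields axiom 2; finally, an arbitrary intersection of closed sides is precisely a single preimage $\mc T_{\mc I}^{-1}(X)$, hence a PL-ball by the corollary, giving axiom 3.

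The main obstacle is not the ball count -- Corollary \ref{cor:tom_types_balls}, bootstrapped through Propositions \ref{prop:mij_constructible} and \ref{prop:mij_ball}, already hands us that -- but rather the bookkeeping in axiom 1: transferring ``$H_B$ is a PL-ball of dimension $d-1-|B|$'' into ``$H_B$ is a \emph{pseudohyperplane} in its ambient space,'' i.e.\ that it PL-separates a neighbourhood into two halves homeomorphic to halfspaces. This is where the completeness of $\mc L(\mc A',\mc I)=\{+,-,0\}^{\mc A'}$ becomes essential: it guarantees both sides of $H_B$ are non-empty balls whose union around $H_B$ is a full ball, from which the pseudohyperplane property follows by the same complementary-balls argument used in \cite[Lemma 5.1.1]{BLSWZ} and already invoked in the proof of Lemma \ref{lem:mij_manifold}.
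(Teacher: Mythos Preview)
Your overall architecture is exactly the paper's: pass from $M$ to the fine \mixsd $S$, invoke \cite[Theorem~4.2]{toprep1} to get the family $\mc A$, use Proposition~\ref{prop:halfspace_system_om} to split off the empty-intersection case, and then verify the three axioms of Definition~\ref{def:linear_phpa}.

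There is one imprecision worth flagging. You propose to read off all three axioms from Corollary~\ref{cor:tom_types_balls}, but that corollary only tells you that a \emph{single} sign-preimage $\inv{\mc T_{\mc I}}(X)$ is a PL-ball. For a proper subset $B\subsetneq\mc A'$ the set $H_B=\bigcap_{e\in B}H_e$ is not one such preimage; it is the union $\bigcup_{X:\,z(X)\supseteq B}\inv{\mc T_{\mc I}}(X)$, and a union of PL-balls need not be a ball. Your final paragraph acknowledges a difficulty at this spot but misidentifies it: the problem is not turning ``ball'' into ``pseudohyperplane,'' it is getting ``ball'' in the first place.

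The paper avoids this by going one level up, to Proposition~\ref{prop:mij_ball} rather than its corollary. The point is that $H_A$, $H_A\cap H_e$, $H_A\cap H_e^{\pm}$ are each of the form $M(\mc I',\mc J)$ for a suitable tuple $\mc J$ of partitions (with $J_i=I_i\disjoint\compl{I_i}$ on the coordinates in $A$ and $J_i=[d]$ elsewhere, and so on), and Proposition~\ref{prop:mij_ball} applies directly to these more general $\mc J$. Once you use Proposition~\ref{prop:mij_ball} instead of Corollary~\ref{cor:tom_types_balls} for axioms~1 and~2, your sketch goes through verbatim; axiom~3 is indeed the corollary as you say.
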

\begin{proof}
Let $M$ be a \tom in general position, $S$ the fine \dnmixsd corresponding to $M$ and $\mc A$ the family of \tphps induced by $S$. We have to show that $\mc A'_{\mc I}$ is an arrangement of affine pseudohyperplanes for each $\mc A'\subseteq\mc A$ and halfspace system $\mc I=(I_1,\ldots,I_n)$.

So assume that $\bigcap \mc A'_{\mc I}\neq\emptyset$, \ie $0\in\mc L(\mc A',\mc I)$. Hence by Proposition \ref{prop:halfspace_system_om} $(\mc L(\mc A',\mc I),\mc A')$ is an oriented matroid given by its covectors.

We have to show that $\mc A'_{\mc I}$ satisfies the axioms in Definition \ref{def:linear_phpa}.
\begin{enumerate}
\item Let $A\subseteq \mc A'_{\mc I}$. We have to show that $H_A\coloneq \bigcap_{a\in A} H_a$ is a PL-ball.\index{PL!ball} So let $\mc I'=(I_1',\ldots,I_n')$ with $I_i'=[d]$ for each $i$ and $\mc J=(J_1,\ldots,J_n)$ with \[J_i=\begin{cases}I_i\disjoint \compl{I_i}& \tif i\in A,\\ [d] & \other. \end{cases}\]
Then $H_A=M(\mc I',\mc J)$.

\item Assume $e\nin A$. Then $H_A\not\subseteq H_e$. We have to show that $H_A\cap H_e$ is a pseudohyperplane in $H_A$ with sides $H_A\cap H_e^+$ and $H_A\cap H_e^-$. 

To this end let $\mc I',\mc J$ as before. Moreover, define $\mc I'_1,\mc I'_2$ by
\[I'_{1,i}=\begin{cases}I_i&\tif i=e,\\ [d]&\other, \end{cases}\] \[I'_{2,i}=\begin{cases}\compl{I_i}&\tif i=e,\\ [d]&\other \end{cases}\] and $\mc J_0$ by 
\[J_{0,i}=\begin{cases}I_i\disjoint \compl{I_e}&\tif i=e,\\ J_i&\other. \end{cases}\] 
Then $H_A\cap H_e=M(\mc I',\mc J_0)$, $H_A\cap H_e^+=M(\mc I_1,\mc J)$ and $H_A\cap H_e^-=M(\mc I_2,\mc J)$. Since $\bigcap \mc A'_{\mc I}\neq\emptyset$, each of  $H_A\cap H_e, H_A\cap H_e^+$ and $H_A\cap H_e^-$ is non-empty by Proposition \ref{prop:halfspace_system_om}.

Hence $H_A\cap H_e, H_A\cap H_e^+$ and $H_A\cap H_e^-$ are PL-balls of the correct dimensions. Moreover, $\cl{H_A\cap H_e^+}\cap \cl{H_A\cap H_e^-}=H_A\cap H_e$ and hence $H_A\cap H_e^+$ and $H_A\cap H_e^-$ are the sides of $H_A\cap H_e$.

\item We have to show that the intersection of an arbitrary collection of closed sides is a PL-ball. This follows directly from Corollary \ref{cor:tom_types_balls}.
\qedhere
\end{enumerate}
\end{proof}

\section{The elimination property}\label{sec:elim}
This section is about the all important elimination property. 
Recall that by Oh and Yoo \cite[Proposition 4.12]{Oh/Yoo} the elimination property holds for fine \dnmixsds.
In this section we apply the Topological Representation Theorem \ref{thm:toprep2} to extend this to all \dnmixsds.

\subsection{Blowing up hyperplanes in a \mixsd}

Let $S$ be a fine \dnmixsd and fix $i\in[n]$. The following construction is an inverse of the deletion operation and yields a \mixsd of $\dilsimp N{d-1}$ ($N>n$) by ``blowing up'' one \tphp in the dual arrangement.  

We have to fix some notation:
Let $S$ be a fine \dnmixsd. For $\emptyset\neq I\subset[n]$ we denote by $\restr SI$ the \mixsd of $\dilsimp{n}{\card I-1}$ induced by $S$ on the $I$-face of $\dilsimp n{d-1}$. \Ie $\restr SI$ is the contraction $\contraction S{\compl I}$ of $S$ with the complement of $I$.

\begin{definition} Let $S, S'$ be fine \dnmixsds, respectively $\dilsimp{n'}{d-1}$.
Let $C\in S$ be a cell. Then the \defn[blow-up!of a \mixsd]{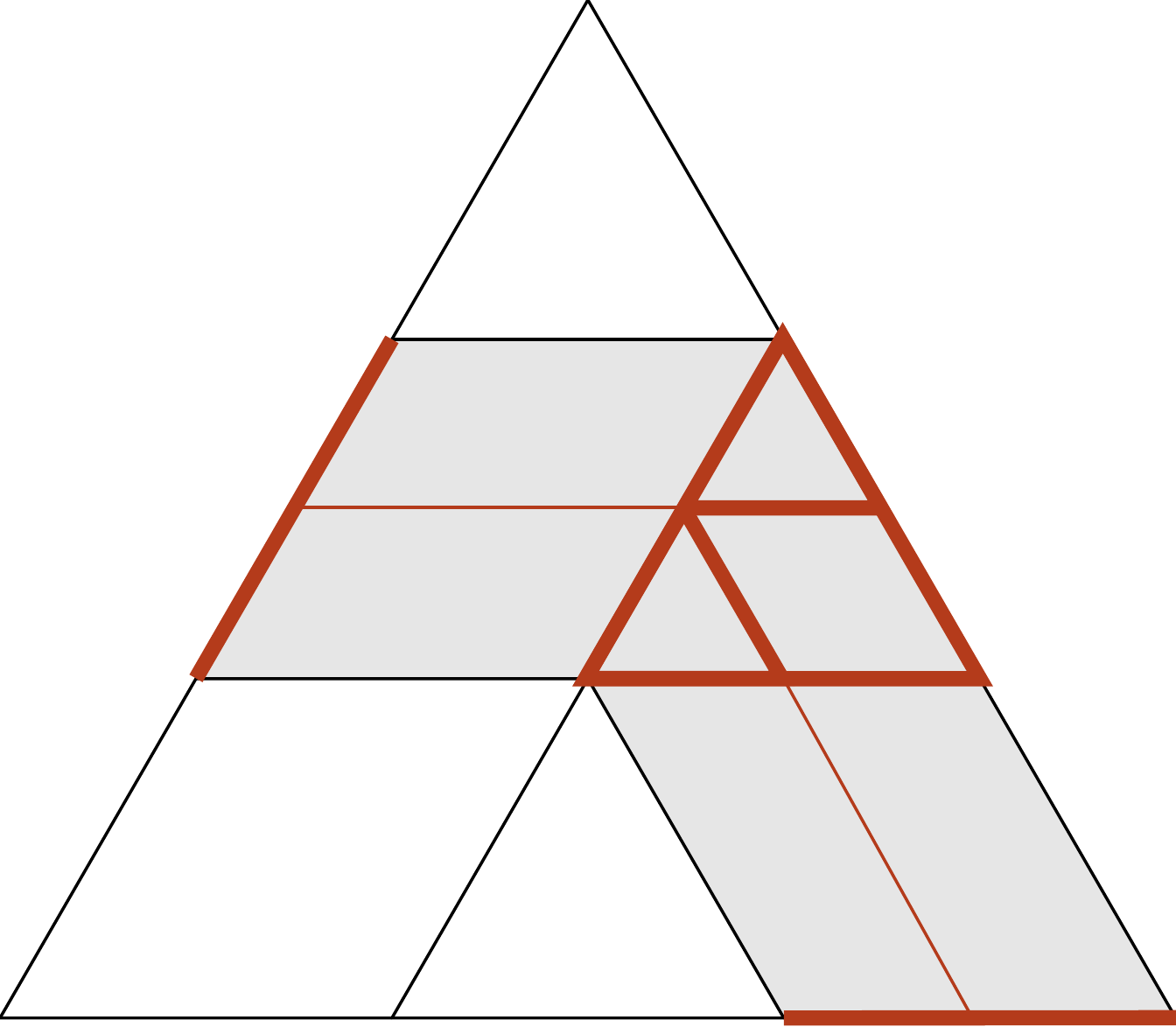} of $C$ with respect to $S'$ at position $i$ is the set of $(n+n'-1,d)$-types \[C\vee_i S'\coloneq\{(\deletion Ci,X)\mid X\in\restr {S'}{C_i}\}.\]
That is, we subdivide the $C_i$-face of $C$ as $\restr{S'}{C_i}$.

Moreover, the \defn[blow-up!of a \mixsd]{blow-up} of $S$ with respect to $S'$ at position $i$ is \[S\vee_i S'\coloneq\bigcup_{C\in S}C\vee_i S'.\]
\end{definition}

See Figure \ref{fig:blow_up} for an example.

The following lemma follows easily:
\begin{lemma}
The types in the blow-up $S\vee_i S'$ yield a fine \mixsd of $\dilsimp{N}{d-1}$ with $N\coloneq n+n'-1$.
\end{lemma}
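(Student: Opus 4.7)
The plan is to verify the two axioms of Definition \ref{def:mixed_sd}, after checking that each blow-up cell has the right Minkowski structure, and then confirm fineness by a dimension count.

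First I would observe the structural point: each type $(\deletion{C}{i},X)\in S\vee_i S'$ is an ordered tuple of $(n-1)+n'=N$ nonempty subsets of $[d]$, so the corresponding cell is a Minkowski sum of $N$ faces of $\nsimplex{d-1}$, sitting inside $\dilsimp{N}{d-1}$. This is automatic from the definition.

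The heart of the argument is the local picture. Fix $C=(C_1,\ldots,C_n)\in S$. By the very definition of the restricted subdivision, $\restr{S'}{C_i}$ is a fine \mixsd of $n'\cdot\simplex_{C_i}$, where $\simplex_{C_i}$ denotes the face of $\nsimplex{d-1}$ spanned by $C_i$. Hence the Minkowski sums $X_1+\cdots+X_{n'}$, as $X=(X_1,\ldots,X_{n'})$ ranges over $\restr{S'}{C_i}$, form a fine \mixsd of $n'\cdot\simplex_{C_i}$. Summing $\sum_{j\neq i}C_j$ to both sides, the cells of $C\vee_i S'$ form a fine \mixsd of the expanded block $\sum_{j\neq i}C_j+n'\cdot\simplex_{C_i}$. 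Thus locally we already have what we want. The fineness of a maximal cell is confirmed by the dimension count $(d-1-\dim C_i)+\dim C_i=d-1$, using that $S$ and $\restr{S'}{C_i}$ are both fine.

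Next I would show these local pieces glue into a genuine subdivision of $\dilsimp{N}{d-1}$. For two cells $C,C'\in S$ and $X\in\restr{S'}{C_i}$, $X'\in\restr{S'}{C'_i}$, the intersection condition for $S$ gives that adjacent $C_i$ and $C'_i$ share a common face $C_i\cap C'_i$ along which $S$ agrees; applying this face-wise, the restrictions $\restr{S'}{C_i}$ and $\restr{S'}{C'_i}$ agree on $\restr{S'}{C_i\cap C'_i}$, so the Minkowski sum condition (2) of Definition \ref{def:mixed_sd} is inherited from $S$ and $S'$ simultaneously. Covering of $\dilsimp{N}{d-1}$ follows by writing $\dilsimp{N}{d-1}=\dilsimp{n}{d-1}+(n'-1)\nsimplex{d-1}$ and checking cell-by-cell that the expanded blocks $\sum_{j\neq i}C_j+n'\cdot\simplex_{C_i}$ exhaust it, or more cleanly by invoking the Cayley Trick: the blow-up corresponds on the product side to the natural expansion of the $i$-th vertex of $\nsimplex{n-1}$ into an $\nsimplex{n'-1}$ using the triangulation $T'$ dual to $S'$, giving a triangulation of $\simpprod{N-1}{d-1}$, which translates back to a \mixsd of $\dilsimp{N}{d-1}$ with exactly the types of $S\vee_i S'$.

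The main obstacle is the global compatibility: making sure the locally constructed fine \mixsds $C\vee_i S'$, one for each $C\in S$, match up consistently along the boundaries of cells of $S$. The Cayley-trick reformulation is the cleanest way to dispatch this, since it reduces the statement to the well-known fact that inserting vertices into one factor of a product of simplices produces a triangulation of the larger product.
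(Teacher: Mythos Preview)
Your route is genuinely different from the paper's. The paper does not argue geometrically at all: after remarking that covering is clear, it verifies the intersection property purely combinatorially, by showing that for any two blow-up types $A=A_S\vee_i A_{S'}$ and $B=B_S\vee_i B_{S'}$ the comparability graph $\compgr{A}{B}$ is acyclic. The point is that $\compgr{A}{B}$ is built from the edges of $\compgr{A_S}{B_S}$ at positions $\neq i$ together with the edges of $\compgr{A_{S'}}{B_{S'}}$, both of which are acyclic; the only danger is that an undirected cycle of $\compgr{A_S}{B_S}$ acquires a direction via the $S'$-edges, and this is ruled out because fineness of $S$ forbids cycles in the type graphs of $A_S,B_S$. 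This handles the global intersection condition in one stroke.

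Your local analysis (that $C\vee_i S'$ is a fine \mixsd of $\sum_{j\neq i}C_j + n'\cdot\simplex_{C_i}$) is correct and pleasant. The gap is exactly at the global step. Saying that condition (2) of Definition~\ref{def:mixed_sd} is ``inherited from $S$ and $S'$ simultaneously'' is where the work lies: for cells $(\deletion{C}{i},X)$ and $(\deletion{C'}{i},X')$ with $C\neq C'$ you must show that their Minkowski-cell intersection equals the Minkowski sum of the factorwise intersections, and this does not follow formally from the compatibility of $\restr{S'}{C_i}$ with $\restr{S'}{C'_i}$ alone. Your Cayley-trick alternative does not close the gap either: the ``well-known fact'' that expanding one vertex of $\nsimplex{n-1}$ into a copy of $\nsimplex{n'-1}$ triangulated by $T'$ yields a triangulation of $\simpprod{N-1}{d-1}$ is precisely a restatement of the lemma, not an independent input. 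If you want to finish along your lines, the cleanest fix is to prove the intersection property directly via the comparability graph as the paper does; that criterion is exactly tailored to condition (2) and makes the ``simultaneous inheritance'' precise.
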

\begin{proof}
It is clear that each type corresponds to a Minkowski cell inside $\dilsimp N{d-1}$ and that the cells cover $\dilsimp N{d-1}$. It remains to show the intersection property.

Let $A=A_S\vee_i A_{S'}, B=B_S\vee_iB_{S'}$ be two cells in $S\vee_iS'$. We have to show that $A$ and $B$ are comparable. Since $S$ is a \mixsd, $A_S$ and $B_S$ are comparable, \ie $\compgr{A_S}{B_S}$ is acyclic. The same holds for $\compgr{A_{S'}}{B_{S'}}$.

Now consider the comparability graph $\compgr AB$. This has the same vertex set $[d]$ and all edges from $\compgr{A_S}{B_S}$ accounting for positions different from $i$ and all edges from $\compgr{A_{S'}}{B_{S'}}$.

For position $i$, the graph $\compgr{A_S}{B_S}$ contains one edge (directed or undirected) between $a$ and $b$ for every $a\in A_{S,i}, b\in B_{S,i}, a\neq b$. The edge set of $\compgr{A_{S'}}{B_{S'}}$ is a subset of the set of these edges. An undirected edge in $\compgr{A_S}{B_S}$ might, however, correspond to a directed one in $\compgr{A_{S'}}{B_{S'}}$. 
Since $S'$ is a \mixsd, the graph $\compgr{A_{S'}}{B_{S'}}$ is acyclic.

Hence it remains to exclude that an undirected cycle in $\compgr{A_S}{B_S}$ becomes a directed one in $\compgr AB$. But since $S$ is fine, for any undirected edge in $\compgr{A_S}{B_S}$ there is a unique position accounting for this edge. Moreover, any undirected cycle in $\compgr{A_S}{B_S}$ would yield a cycle in the type graphs of $A_S$ and $B_S$ which do not exist since $S$ is fine.
\end{proof}

Now fix some  permutation $\pi$ of $[d]$.
Let $S_\pi$ be the $n$-placing extension\index{n-placing@$n$-placing} of $\nsimplex{d-1}$ with respect to $\pi$.
Then we define the \defn[blow-up!of a \mixsd]{blow-up} of the $i$-th \tphp in $S$ with respect to $\pi$ by \[S_{i,\pi}\coloneq S\vee_i S_\pi.\]

In the dual setting of an \atphp this blow-up operation corresponds to adding a slightly shifted copy of the $i$-th tropical pseudohyperplane.

\bigskip

It is more difficult to define the blow-up of a \tphp in a \dnmixsd which is not fine.
Let $S$ be a \dnmixsd, $i\in[n]$ and $\pi=(\pi_1,\ldots,\pi_d)\in\Sym_d$. We also denote by $\overline\pi\coloneq (\pi_d,\ldots,\pi_1)$ the permutation obtained by reversing $\pi$.

Then the blow-up of the $i$-th \tphp has the following full-di\-men\-sion\-al cells:
\begin{itemize}
\item If $A=(A_1,\ldots,A_n)$ is a full-dimensional cell in $S$ with $\card{A_i}=1$ (\ie $A$ is not contained in the $i$-th hyperplane), then $(A,A_i)$ is a maximal cell in $S_{i,\pi}$.
\item If $A=(A_1,\ldots,A_n)$ is a full-dimensional cell in $S$ with $\card{A_i}\geq 2$ then $(A,\{\pi_d\})$ is a maximal cell in $S_{i,\pi}$.

\item Finally, the maximal cells corresponding to the new hyperplane are constructed as follows: Let again $S_\pi$ denote the $n$-placing extension of $\dilsimp n{d-1}$ with respect to $\pi$. Let $P$ be an ordered partition of $[d]$ that has $\overline\pi$ as a refinement. (\Ie neighbouring entries of $\overline\pi$ may be combined into one set.) Moreover, let $A$ be a full-dimensional cell in $S$ with $\card{A_i}\geq 2$. Define $B\coloneq \restr AP$ and let $C$ be the unique maximal cell in $S_\pi$ with $C_1=B_i$. Then $(B,C_2)$ is a maximal cell in $S_{i,\pi}$.
\end{itemize}

\subsection{Approximation by blow-ups}
\label{sec:phpa_to_tom}

\begin{figure}[b]
\centering
\includegraphics[width=2.8cm]{blow-up}\qquad\qquad \includegraphics[width=2.8cm]{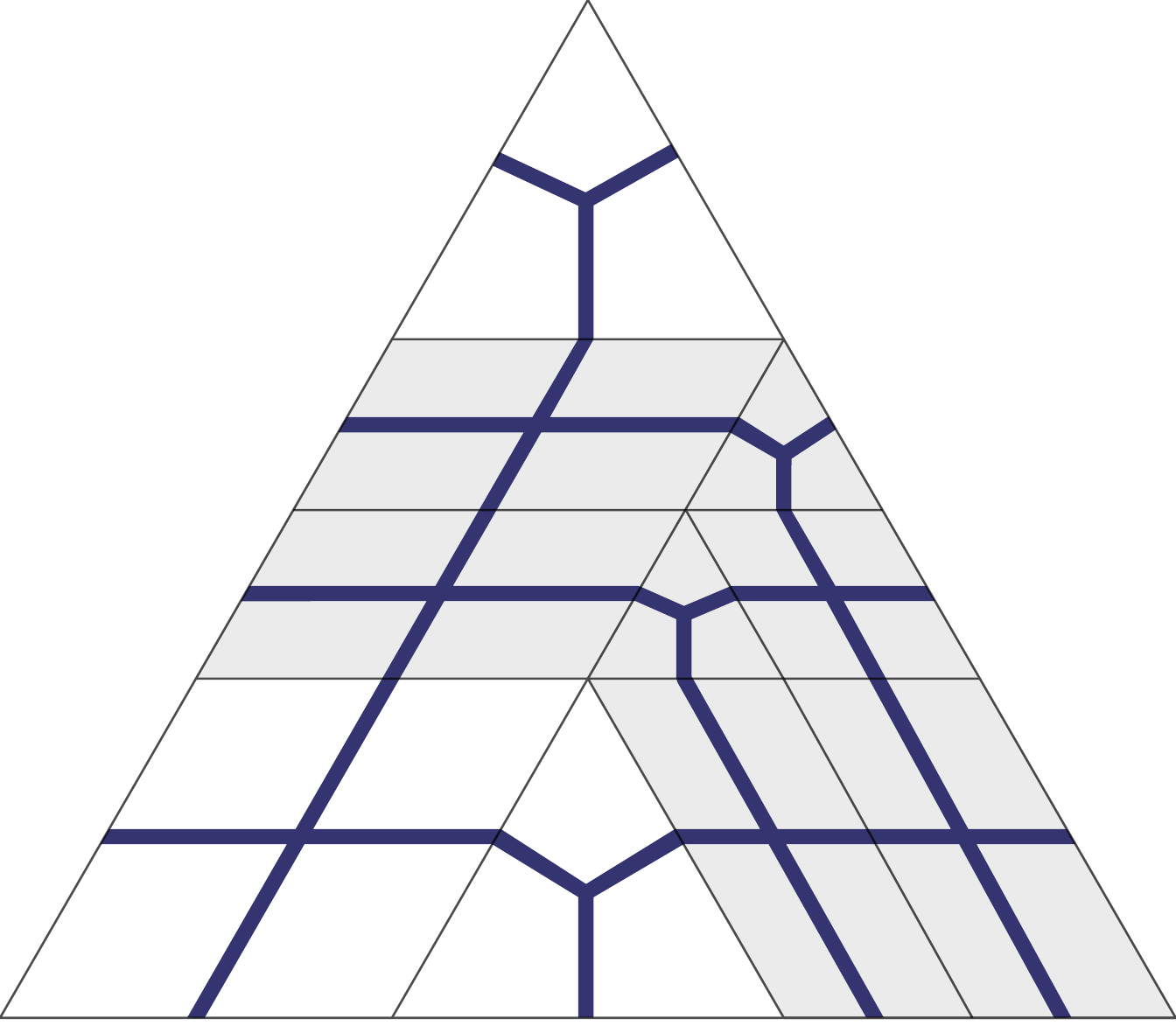} 
\vspace{-2mm}
\caption{The blow-up of a \mixsd of $\dilsimp32$ with respect to one of $\dilsimp22$.  The cells in the shaded hyperplane are subdivided according to the subdivision of the small simplex. The according \tphpa is drawn on the left.}
\label{fig:blow_up}
\end{figure}

In this section we prove that \tphpas as defined in Definition \ref{def:tphpa2} satisfy the elimination property and use this to show the same for all \dnmixsds.

\smallskip
Since it simplifies the presentation we assume all arrangements of \tphps in this section to come from a (fine) \dnmixsd. \Ie we only consider \tphpas which are dual to a fine \dnmixsd.

\smallskip
Let $H$ be a \thp with apex $0$. Recall that $H_I$ denotes the boundary of the tropical halfspace separating the points with types in $I$ from those with types in the complement $\compl I$.
For $p\in \T^{d-1}$ and $\emptyset\neq I\subseteq[d]$ denote  $H_{I,p}\coloneq H_I-p$, \ie we shift the apex of $H_I$ to  $p$. 
For $\emptyset\neq I\subseteq[d]$ denote by $T_I$ the set of all points of type $I$.
Let \[\mc F\coloneq \{\aff T_I\mid I\in\tbinom {[d]}2 \},\] \ie $\mc F$ is an arrangement of linear hyperplanes in $\T^{d-1}$. In fact, $\mc F$ is the arrangement of reflection hyperplanes corresponding to the Coxeter group\index{Coxeter!group} $A_d$.
The connected components (\defn[]{sectors}) of $\T^{d-1}\setminus (\bigcup\mc F)$ correspond one-to-one to the permutations of $[d]$: Again, view $\mc F$ embedded in the simplex $\nsimplex{d-1}$. For $v\in\nsimplex{d-1}$ and $i\in[d]$ denote by $d_i(v)$ the distance of $v$ to the $i$-th vertex of $\nsimplex{d-1}$. Then each sector is determined by the permutation of $[d]$ induced by ordering the $d_i(v)$  increasingly. 
The sectors are dual to the vertices of the $d$-dimensional {permutahedron}.
See Figure \ref{fig:affine_sectors} for an illustration.

\begin{figure}[h]
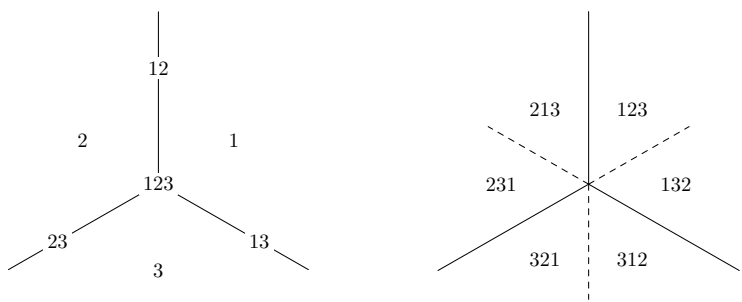

\centering
\includegraphics[width=4cm]{affine_php.1}
\qquad\qquad
\includegraphics[width=4cm]{affine_php.2}
\caption[The arrangement of reflection hyperplanes from a tropical line.]{A $2$-dimensional \thp with its types (on the left) and the corresponding arrangement $\mc F$ of  hyperplanes (on the right). Moreover, the bijection between the open sectors of $\mc F$ and the permutations of $[3]$ is given.}
\label{fig:affine_sectors}
\end{figure}

For $X\subseteq\{I\mid\emptyset\neq I\subset[d]\}$ we say that $A\subseteq\T^{d-1}$  \defn[approximate]{approximates} $T_X\coloneq\bigcup_{I\in X}T_I$ if:
\begin{itemize}
\item For each $I\in X$, there is $\eps_I>0$ such that  $T_I$ is contained in $A$ except possibly for an $\eps_I$-neighbourhood of the (relative) boundary $\bnd T_I$.

\item For each $I\nin X$ there is $\eps_I>0$ such that $T_I\cap A$ is contained in an $\eps_I$-neighbourhood of $\bnd T_I$.
\end{itemize}

Intuitively, the set $A$ is supposed to contain ``almost everything'' of $T_I$ if $I\in X$ and ``almost nothing'' of $T_I$ if $I\nin X$. Then $T_X$ is homeomorphic to  $A$.
We will be interested in approximating neighbourhoods for $X=\{a,b,a\cup b\}$ with $a,b\subset[d]$.
See Figure \ref{fig:ex_repr_set} for an illustration.

\begin{figure}[t]
\centering
\includegraphics[width=3cm]{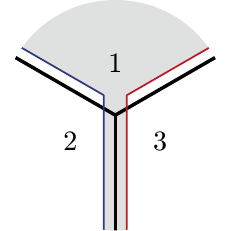}
\qquad\qquad
\includegraphics[width=3cm]{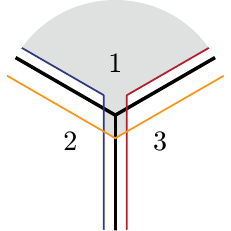}
\caption{Approximating neighbourhoods corresponding to $a=1$ and $b=23$ (on the left), respectively $a=1$ and $b=123$ (on the right).}
\label{fig:ex_repr_set}
\end{figure}

For $I\subseteq[d]$ and $p\in\T^{d-1}$ denote by $\contypes Ip$ the set of all types that are approximated by $\shifths Ip$. 
The following lemma characterises the types in $\contypes  Ip$:
\begin{lemma} Let $H\subset\T^{d-1}$ be a \thp and  $\pi=(\pi_1,\ldots,\pi_d)$ the permutation of $[d]$ corresponding to a point $p\in\T^{d-1}\setminus(\bigcup \mc F)$.
\begin{enumerate}
\item \label{it:approx_intersection}Let $\emptyset\neq J\subset [d]$. Then
$T_J\cap H_{i,p}^+\neq\emptyset$ \tiff each $j\in J\setminus\{i\}$ comes before $i$ in $\pi$. 
\item
$\contypes Ip$ only depends on the open sector of $\mc F$ in which $p$ lies, hence on the permutation corresponding to $p$.
\item
Let $i\in[d]$ and $J\subseteq[d]$. Then $J\in\contypes ip$ \tiff $i\in J$ and $\{i\}\cup\{q\mid q \text{ comes before }i\text{ in }\pi\}\supseteq J.$

\item
$\contypes Ip=\bigcup\limits_{i\in I}\contypes ip$.
\end{enumerate}
\end{lemma}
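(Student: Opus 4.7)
The plan is to work in coordinates. Fix the apex of $H$ at the origin and use the min-plus convention, so $T_J$ consists of points $x\in\T^{d-1}$ whose coordinate-wise minimum is attained exactly on the indices in $J$, and the shifted sector $H_{i,p}^+=\{x:x_i-p_i<x_k-p_k\ \forall k\neq i\}$. With the chosen orientation of $\pi$, the relation ``$j$ before $i$ in $\pi$'' translates to a single coordinate inequality between $p_j$ and $p_i$. All four statements then reduce to routine inequality calculations.

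For Part 1, feasibility of $T_J\cap H_{i,p}^+$ is checked by a case split on $i\in J$ vs.\ $i\notin J$. The constraints coming from indices $k\notin J\cup\{i\}$ are always satisfiable by pushing $x_k$ to infinity along $T_J$, while those from $J\setminus\{i\}$ collapse in both cases to the same ordering relation between $p_j$ and $p_i$; this is exactly the asserted criterion. Part 3 refines this to a volume estimate. For $(\Leftarrow)$ one verifies that $T_J\setminus H_{i,p}^+$ is contained in an $\eps$-tube about $\bnd T_J$ for $\eps$ depending only on $p$ (specifically, a uniform bound on the ``deficits'' $p_k-p_i$). For $(\Rightarrow)$, non-emptiness and Part 1 give the ordering condition, while the requirement $i\in J$ is forced: if $i\notin J$, then $x\in H_{i,p}^+$ combined with $x_j=\min x$ for $j\in J$ pins $x_i$ into a bounded band around $\min x$, confining $T_J\cap H_{i,p}^+$ to a bounded neighbourhood of $\bnd T_J$. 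Part 2 is then a corollary: Parts 3 and 4 characterise $\contypes Ip$ purely in terms of the order of the coordinates of $p$, which is constant on each open sector of $\mc F$.

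For Part 4, the inclusion $\bigcup_{i\in I}\contypes ip\subseteq\contypes Ip$ is immediate from $H_{i,p}^+\subseteq H_{I,p}^+$. For the converse, the key observation is that for $x$ deep in the interior of $T_J$ (all coordinates $x_k$ with $k\notin J$ sufficiently larger than $\min x$), the minimum of the vector $x-p$ is uniquely attained at the index $i^*\in J$ coming \emph{last} in $\pi$. Hence the bulk of $T_J$ lies in the single sector $S_{i^*,p}$, so $J\in\contypes Ip$ forces $i^*\in I$, and Part 3 then yields $J\in\contypes{i^*}p$. I expect the main obstacle to be the careful handling of the approximation definition: one must argue for a uniform $\eps$-tube rather than only a local estimate, and rule out the degenerate scenario where $T_J$ sits on a lower-dimensional interior boundary of $H_{I,p}^+$ (between two sectors $S_{i,p}, S_{j,p}$ with $i,j\in I$) --- an alignment precluded by the assumption $p\notin\bigcup\mc F$.
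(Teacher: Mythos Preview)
Your argument is correct, and it takes a genuinely different route from the paper's own proof. You work entirely in explicit coordinates, reducing each part to a system of strict inequalities and checking feasibility (Part~1) respectively volume (Part~3) directly; your Part~4 argument via the ``last index $i^*\in J$ in $\pi$'' is a clean way to identify the unique sector capturing the bulk of $T_J$, and you are right to flag the degenerate case where $T_J$ could lie along an interior seam of $H_{I,p}^+$ --- the hypothesis $p\notin\bigcup\mc F$ is exactly what rules this out.

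The paper, by contrast, argues more structurally. For Part~1 it first handles the singleton case $J=\{j\}$ and then extends to arbitrary $J$ by tropical convexity (intersections of tropically convex sets are tropically convex, and $H_{i,p}^+$ is open). For Part~3 it does not compute $\eps$-tubes at all but instead runs an induction on the Coxeter length of $\pi$: starting from $\pi$ with $\pi_1=i$ (where only $\{i\}$ is approximated), it applies one adjacent transposition at a time and tracks how $\contypes ip$ grows when the transposition moves $i$ past one more element. Part~4 is dispatched by the set identity $H_{I,p}^+=\bigcup_{i\in I}H_{i,p}^+$ (up to the lower-dimensional seams you mention). Your approach is more elementary and yields explicit $\eps$'s, which makes the ``approximation'' definition fully transparent; the paper's approach hides the analysis behind convexity and an inductive bookkeeping argument, which is shorter to state but less self-contained. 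Either works.
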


\begin{proof}$ $
\begin{enumerate}
\item 
We first prove the statement for $\card J=1$. So assume $J=\{j\}$. But then it is easy to see that $H_{i,p}^+$ intersects $T_j=H_j^+$ \tiff $j=i$ or $j$ comes before $i$ in $\pi$.

The general statement (for $\card J\geq 2$) follows since intersections of tropically convex sets are tropically convex and $H_{i,p}^+$ is open.

\item This is clear.

\item
Assume \twlog that $i=1$.
We will prove the statement by induction over the length of $\pi$, \ie the minimal number of transpositions needed to write $\pi$ as a product of transpositions.

It is clear that the only type approximated by $H_{1,p}^+$ for $\pi_1=1$ is $\{1\}$.

\medskip
Now assume the statement is true for $\pi=(\pi_1,\ldots,\pi_d)$ and apply one transposition $\tau=(\pi_j,\pi_{j+1})$ with $\pi_j<\pi_{j+1}$ to obtain $\pi'$; \ie $\tau$ swaps two neighbouring entries of $\pi$, increasing the length by one. Denote by $p'$ one point in the $\pi'$-sector of $H$. In particular, we can always choose $p'$ such that $H_{i,p'}^+\supset H_{i,p}^+$.

This means we move $p$ into a neighbouring sector of $\mc F$. There are two cases:
\begin{itemize}
\item If both $\pi_j,\pi_{j+1}$ come before or after $1$ in $\pi$, the types approximated by $H_{i,p}^+$ do not change. Indeed, we can decrease the length by one by  relabeling  the sectors $\pi_j\leftrightarrow \pi_{j+1}$.

\item Assume $\pi_j=1$. By passing from sector $\pi$ to the sector $\pi'$ we cross the hyperplane $\lin T_{1,\pi_{j+1}}$. 
We now show that then $\contypes i{p'}=\contypes ip \cup\:\{r\cup\{\pi_{j+1}\}\mid r\in \contypes ip\}$.

So let $r\in\contypes ip$ and denote $r'\coloneq r\cup\{\pi_{j+1}\}$. \Ie $T_r$ is approximating by $H_{i,p}^+$. But then clearly $T_r$ is also approximated by $H_{i,p'}^+\supset H_{i,p}^+$. Moreover, $T_{r'}$ is approximated by $H_{i,p'}^+$ since it intersects $H_{i,p'}$ and is contained in the boundary of $T_r$.
That $\contypes i{p'}$ is not larger than this follows from (\ref{it:approx_intersection}).
\end{itemize}

\item 
This follows from $\shifths Ip=\bigcup\limits_{i\in I} \shifths ip$. 
\qedhere
\end{enumerate}
\end{proof}

\begin{lemma}\label{lem:aff_php_elim}
Let $H$ be a \tphp in $\T^{d-1}$ and $\emptyset\neq I,J\subset[d]$.
Then we can represent an approximating neighbourhood of $T_I\cup T_J\cup T_{I\cup J}$ as an intersection of affine pseudohalfspaces.
\end{lemma}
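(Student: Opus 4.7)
The plan is to reformulate the geometric problem combinatorially using the preceding lemma. By item~3 of that lemma, for any $L \subseteq [d]$ and any point $p \in \T^{d-1} \setminus \bigcup \mc F$ lying in the sector corresponding to a permutation $\pi$ of $[d]$, one obtains the clean characterization
\[
K \in \contypes{L}{p} \iff \max\nolimits_\pi(K) \in L,
\]
where $\max_\pi(K)$ denotes the element of $K$ occurring latest in $\pi$. Moreover, the approximation property passes to finite intersections: if $A_k$ approximates $T_{X_k}$, then $\bigcap_k A_k$ approximates $T_{\bigcap_k X_k}$ (each $T_K \cap A_k$ either contains $T_K$ minus a thin boundary strip or is itself contained in one). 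Hence the lemma reduces to producing a finite family of pairs $(\pi_k, L_k)$ with $\bigcap_k \{K : \max_{\pi_k}(K) \in L_k\} = \{I, J, I \cup J\}$.

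For each ``bad'' type $K \notin \{I, J, I \cup J\}$ I will exhibit a single pair $(\pi_K, L_K)$ separating $K$ from the three target types. Since $\max_\pi(I \cup J) \in \{\max_\pi(I), \max_\pi(J)\}$ for every $\pi$, choosing $L_K := \{\max_{\pi_K}(I), \max_{\pi_K}(J)\}$ automatically puts all of $I, J, I \cup J$ in the approximated type set, so it suffices to arrange $\max_{\pi_K}(K) \notin L_K$. A short case analysis on how $K$ sits relative to $I$ and $J$ produces the required $\pi_K$:
\begin{itemize}
\item if $K \not\subseteq I \cup J$, place some element of $K \setminus (I \cup J)$ last in $\pi_K$;
\item if $K \subsetneq I \cup J$ with $K \not\supseteq I$ and $K \not\supseteq J$, list $K$ first, then $(I \cup J) \setminus K$, then the rest;
\item if $I \subsetneq K \subsetneq I \cup J$, list $I$ first, then $K \setminus I$, then $(I \cup J) \setminus K$, then the rest; the case $J \subsetneq K \subsetneq I \cup J$ is symmetric.
\end{itemize}
In each case one checks directly that $\max_{\pi_K}(K)$ lies strictly before both $\max_{\pi_K}(I)$ and $\max_{\pi_K}(J)$ in $\pi_K$ (or, in the first case, sits outside $I \cup J$ altogether), so $\max_{\pi_K}(K) \notin L_K$.

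Intersecting the halfspaces $H_{L_K, p_K}^+$ over all bad $K$ (with $p_K$ any point in the open sector of $\mc F$ corresponding to $\pi_K$) then yields the desired approximating neighbourhood of $T_I \cup T_J \cup T_{I \cup J}$ presented as an intersection of affine pseudohalfspaces. The main obstacle is the combinatorial case analysis above; once the criterion ``$K \in \contypes{L}{p}$ iff $\max_\pi(K) \in L$'' is extracted cleanly from the preceding lemma, the construction is essentially forced. A minor additional check — that each $L_K$ is a non-empty proper subset of $[d]$, which can fail only in very low-dimensional degenerate situations — can be handled directly.
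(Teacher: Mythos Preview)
Your argument is correct and follows the same strategy as the paper's proof: for each forbidden $K$ exhibit a single halfspace $(L,\pi)$ whose approximated type set contains $I,J,I\cup J$ but not $K$, via a short case analysis on how $K$ sits relative to $I\cup J$; your reformulation through $\max_\pi$ and the uniform choice $L_K=\{\max_{\pi_K}(I),\max_{\pi_K}(J)\}$ make the bookkeeping slightly cleaner than in the paper. One small slip to fix: in your third case $\max_{\pi_K}(K)$ lies in the block $K\setminus I$, which comes \emph{after} $\max_{\pi_K}(I)$ (which sits in the first block $I$), not before --- but since $\max_{\pi_K}(J)$ lands in the still later block $(I\cup J)\setminus K=J\setminus K$, the three maxima occupy three distinct blocks and the conclusion $\max_{\pi_K}(K)\notin L_K$ is unaffected.
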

\begin{proof}
It suffices to prove the statement for usual \thps since the PL-homeomorphism taking a \thp to a \tphp also maps our affine pseudohalfspaces in an appropriate way. See Figure \ref{fig:ex_repr_set} for an example.

It suffices to show that for each set $K\neq I,J,I\cup J$ there are $L\subset[d]$ and $p\in\T^{d-1}$ such that $\contypes Lp$ contains $I,J,I\cup J$ but not $K$.
Then we only need to intersect all of these affine pseudohalfspaces for each $K\neq I,J,I\cup J$.

Note that  the open sectors of the arrangement $\mc F$ of linear hyperplanes (and hence the points $p\in\T^{d-1}\setminus(\bigcup\mc F)$) correspond to permutations in $\pi\in\Sym_d$. See again Figure \ref{fig:affine_sectors}.
\begin{itemize}
\item First assume that there is $x\in K\setminus(I\cup J)$. Then we can choose $\pi$ to end in $x$ to make sure $x$ will never occur in any element of $\contypes Lp$. In detail, choose $i\in I, j\in J$. Let $L=\{i,j\}$ and let $p$ be such that $i'\leq i$ for each $i'\in I$ and $j'\leq j$ for each $j\in J$ and $x>y$ for each $y\neq x$.
 
\item If $K\subseteq I\cap J$, choose $i\in I\setminus K,j\in J\setminus K$, which exist since $K\neq I,J$. Let $L=\{i,j\}$ and choose $\pi$ in such a way that the elements of $I-\{i\}$ and $J-\{j\}$ come first. Then any element of $\contypes Lp$ contains either $i$ or $j$. Thus, $K\nin \contypes Lp$ and it is easy to check that $I,J,I\cup J\in\contypes Lp$. 

\item Otherwise there is $i\in (I\cup J)\setminus K$. Let $L=\{i\}$ and let $\pi$ begin with the elements of $(I\cup J)-\{i\}$. Then every element of $\contypes Lp$ contains $i$. Hence $K\nin\contypes Lp$. Again, it is easy to see that $I,J,I\cup J\in\contypes Lp$.\qedhere
\end{itemize}
\end{proof}
See Figure \ref{fig:approx_blow_up} for an example.

\begin{lemma}
Let $H$ be a \tphp with apex $0$ in $\T^{d-1}$. For each $(I,\pi)$ with $\emptyset\neq I\subset[d]$ and $\pi\in\Sym_d$ fix one point $p_{I\pi}$ in the $\pi$-sector of $H$ in such a way that the arrangement of tropical hyperplanes with apices in $\{0\}\cup\{p_{I\pi}\}$ is in general position. Then \[\mc H\coloneq\{H_{I,p_{I\pi}}\mid\emptyset\neq I\subset[d], \pi\in\Sym_d\}\] is an arrangement of affine \phps.
\end{lemma}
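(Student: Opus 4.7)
The plan is to exhibit $\mc H$ as a collection of halfspace boundaries of a single tropical hyperplane arrangement and then apply Theorem \ref{thm:toprep2}. Let $\mc A^*$ denote the arrangement of tropical hyperplanes with apices $\{0\}\cup\{p_{I\pi}\}$, which by hypothesis is in general position. By \cite[Theorem 3.6]{Ardila/Develin}, $\mc A^*$ realises a \tom $M^*$ in general position. Since the apices are pairwise distinct (general position forbids coincident apices), each element $H_{I,p_{I\pi}}\in\mc H$ is a halfspace boundary of exactly one tropical hyperplane in $\mc A^*$, namely the one with apex $p_{I\pi}$.

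First I would show that $\mc A^*$ itself is a \tphpa in the sense of Definition \ref{def:tphpa2}. Theorem \ref{thm:toprep2} produces some realisation $\tilde{\mc A}$ of $M^*$ which is an arrangement of tropical pseudohyperplanes in the sense of Definition \ref{def:tphpa2}. Because $\mc A^*$ and $\tilde{\mc A}$ realise the same \tom in general position, they induce combinatorially equivalent cell decompositions of $\T^{d-1}$, and a PL-homeomorphism $\varphi$ of $\T^{d-1}$ carries $\tilde{\mc A}$ to $\mc A^*$ (extending the canonical correspondence of cells via the Poincar\'e dual construction). Since the affine pseudohyperplane property is PL-invariant, $\mc A^*$ also satisfies Definition \ref{def:tphpa2}.

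Next, to verify that $\mc H$ is an arrangement of affine pseudohyperplanes, I would check Definition \ref{def:affine_phpa} directly. Let $\mc H'\subseteq\mc H$; if $\bigcap\mc H'=\emptyset$ there is nothing to prove. Otherwise write $\mc H'=\{H_{I_k,p_{I_k\pi_k}}\}_k$, let $\mc A'\subseteq\mc A^*$ be the corresponding subarrangement of tropical hyperplanes, and let $\mc I:=(I_k)_k$ be the induced halfspace system. Then $\mc H'=\mc A'_{\mc I}$, and Definition \ref{def:tphpa2} applied to $\mc A^*$ immediately yields that $\mc H'$ is an arrangement of linear pseudohyperplanes. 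Since $\mc H'$ was arbitrary, this is exactly the condition required by Definition \ref{def:affine_phpa}.

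The main obstacle I anticipate is the first step: transferring the conclusion of Theorem \ref{thm:toprep2} from the abstract realisation $\tilde{\mc A}$ to our specific realisation $\mc A^*$ by genuine tropical hyperplanes. This relies on confirming that two realisations of the same \tom in general position are PL-equivalent as cell complexes, which should follow from the fact that the combinatorial type of the dual \dnmixsd determines the arrangement up to a PL-homeomorphism of $\T^{d-1}$ fixing the boundary.
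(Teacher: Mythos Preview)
Your approach is correct and coincides with the paper's proof, which is the single sentence ``This follows by applying the Topological Representation Theorem~\ref{thm:toprep2} to realisable \toms.'' You have simply unpacked that sentence.

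The one place where you do more work than necessary is the ``transfer'' step you flag at the end. You produce an abstract realisation $\tilde{\mc A}$ of $M^*$ via Theorem~\ref{thm:toprep2} and then argue that a PL-homeomorphism carries $\tilde{\mc A}$ to $\mc A^*$. This detour is not needed: look at how the proof of Theorem~\ref{thm:toprep2} actually runs. It starts from the fine \mixsd $S$ and takes $\mc A$ to be the family of \tphps induced by $S$ (its Poincar\'e dual), and the remaining argument only uses combinatorial properties of $S$ via the complexes $M(\mc I,\mc J)$. In the realisable case the Poincar\'e dual of the regular \mixsd \emph{is} the original \thpa (this is precisely the Develin--Sturmfels duality), so the proof of Theorem~\ref{thm:toprep2} applies directly to $\mc A^*$ rather than to some auxiliary $\tilde{\mc A}$. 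Once you know $\mc A^*$ is a \tphpa in the sense of Definition~\ref{def:tphpa2}, your second paragraph finishes the argument: $\mc H$ is exactly $\mc A'_{\mc I}$ for the sub-arrangement $\mc A'=\mc A^*\setminus\{H\}$ and the halfspace system assigning $I$ to the hyperplane with apex $p_{I\pi}$.
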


\begin{proof}
This follows by applying the Topological Representation Theorem \ref{thm:toprep2} to realisable \toms.
\end{proof}

We can extend the above construction to tropical \emph{pseudo}hyperplanes as follows: Let $H$ be a \tphp. Then $H$ is the image of a \thp $H'$ under a PL-homeomorphism $\phi$ of $\T^{d-1}$. Then we define $H_{I,p}\coloneq \phi (H'_{I,p})$. 

Note that by continuity of $\phi$ and the fact that $\phi$ fixes the boundary of $\T^{d-1}$ we can always choose the point $p$ so that $H_{I,p}$ lies very close to $H_I$. 
Now consider an arrangement $\mc A=(H_i)_{i\in[n]}$ of \tphps. We can do the above construction for each of them individually.

\medskip
If $H$ is a \tphp in such an arrangement, then we can  consider $H_{I,p}$ as $H'_I$ for the new hyperplane $H'$ that arises by blowing up $H$ with respect to the permutation $p$.
The following is immediate:
\begin{lemma}
Let $S=\nsimplex{d-1}$ be the \mixsd dual to a tropical hyperplane $H$ and fix $\pi\in\Sym_d$. Then the blow-up of $H$ with respect to $\pi$ corresponds to adding a second tropical hyperplane with apex in the $\overline\pi$-sector of $H$.
\end{lemma}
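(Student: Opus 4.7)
The plan is a direct combinatorial verification on both sides of the claimed duality. Since $S = \nsimplex{d-1}$ is the trivial subdivision, consisting of a single maximal cell labelled by $[d]$, the definition of the blow-up operation immediately gives
\[S \vee_1 S_\pi \;=\; \{([d], X) \mid X \in \restr{S_\pi}{[d]}\},\]
so the first coordinate of every type is simply inherited from the ambient $[d]$ of $S$, and the blow-up is essentially a relabelled copy of $S_\pi$ itself. Hence the lemma reduces to showing that $S_\pi$, the $n$-placing extension of $\nsimplex{d-1}$ with respect to $\pi$, is dual (via Develin--Sturmfels) to an arrangement of two \thps, one with apex $0$ (which recovers $H$) and one with apex $q$ in the $\overline\pi$-sector of $H$.

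First I would unfold the $n$-placing recipe from Section \ref{sec:mixsds} to enumerate the maximal cells of $S_\pi$. Placing $(2,\pi_1)$ attaches a copy of $S$ with the singleton $\{\pi_1\}$ appended; each subsequent vertex $(2,\pi_i)$ contributes cells whose second coordinate is $\{\pi_i\}$ and whose first coordinate lies in the contraction $\contraction S{\{\pi_1,\ldots,\pi_{i-1}\}}$, together with refinements. The result is a completely explicit list of Minkowski cells of $\dilsimp2{d-1}$ attached to the two summand simplices.

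Next, in parallel, I would describe the arrangement of two \thps with apices $0$ and $q$ in the $\overline\pi$-sector of $H$. Concretely, since $\overline\pi = (\pi_d,\pi_{d-1},\ldots,\pi_1)$, a point $q$ in this sector is closest to vertex $\pi_d$ of $\nsimplex{d-1}$ and farthest from $\pi_1$, so the $\pi_d$-sector of the second hyperplane covers the $\pi_d$-corner of the first hyperplane and, moving toward the $\pi_1$-corner, we successively encounter the sectors $\pi_{d-1}, \pi_{d-2},\ldots, \pi_1$ of the second hyperplane. Translating this via the Develin--Sturmfels duality (or equivalently, by consulting the characterisation of $\contypes ip$ established in the previous subsection, where the permutation reads off precisely the nested family of types adjacent to each sector), one sees that the cell structure produced by the arrangement coincides cell-by-cell with the list from the $n$-placing.

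The main obstacle is purely bookkeeping: matching the orientation conventions so that the \emph{placing order} $\pi$ on the combinatorial side corresponds to the \emph{reversed permutation} $\overline\pi$ on the geometric side. Once this sign/reversal is pinned down in, say, the case $d=2$ (where there is only one non-trivial permutation reversal), the general case follows by the same pattern, since the $n$-placing recipe and the type characterisation from the $\contypeop$-lemma both proceed one transposition of neighbouring entries at a time. Everything else in the proof is a mechanical unfolding of the definitions of $\vee_1$, of $n$-placing, and of the duality between the \mixsd and the \thp arrangement.
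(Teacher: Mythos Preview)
The paper does not actually give a proof of this lemma: it introduces it with ``The following is immediate'' and relies on Figure~\ref{fig:blow_up_perm} for illustration. Your proposal supplies precisely the kind of direct combinatorial unpacking that the paper implicitly has in mind, and your overall strategy---reduce to identifying $S_\pi$ with the two-hyperplane arrangement, then match cells on both sides using the $n$-placing recipe and the Develin--Sturmfels duality---is correct and is the natural route.

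One small correction: in your displayed formula you write the types of $S\vee_1 S_\pi$ as $([d],X)$, but by definition $(\deletion C1,X)$ deletes the first (and here only) coordinate of $C$, so the blow-up is literally $S_\pi$, not a copy with an extra $[d]$ prepended. This is only a notational slip and does not affect your conclusion that the blow-up is ``essentially a relabelled copy of $S_\pi$ itself''. Your identification of the $\pi\leftrightarrow\overline\pi$ reversal as the only genuine bookkeeping point is also on target, and checking it in a small case (as you suggest) is exactly how one convinces oneself of the convention; this is presumably what the paper considers ``immediate''.
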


\begin{figure}
\centering
\includegraphics[height=3cm]{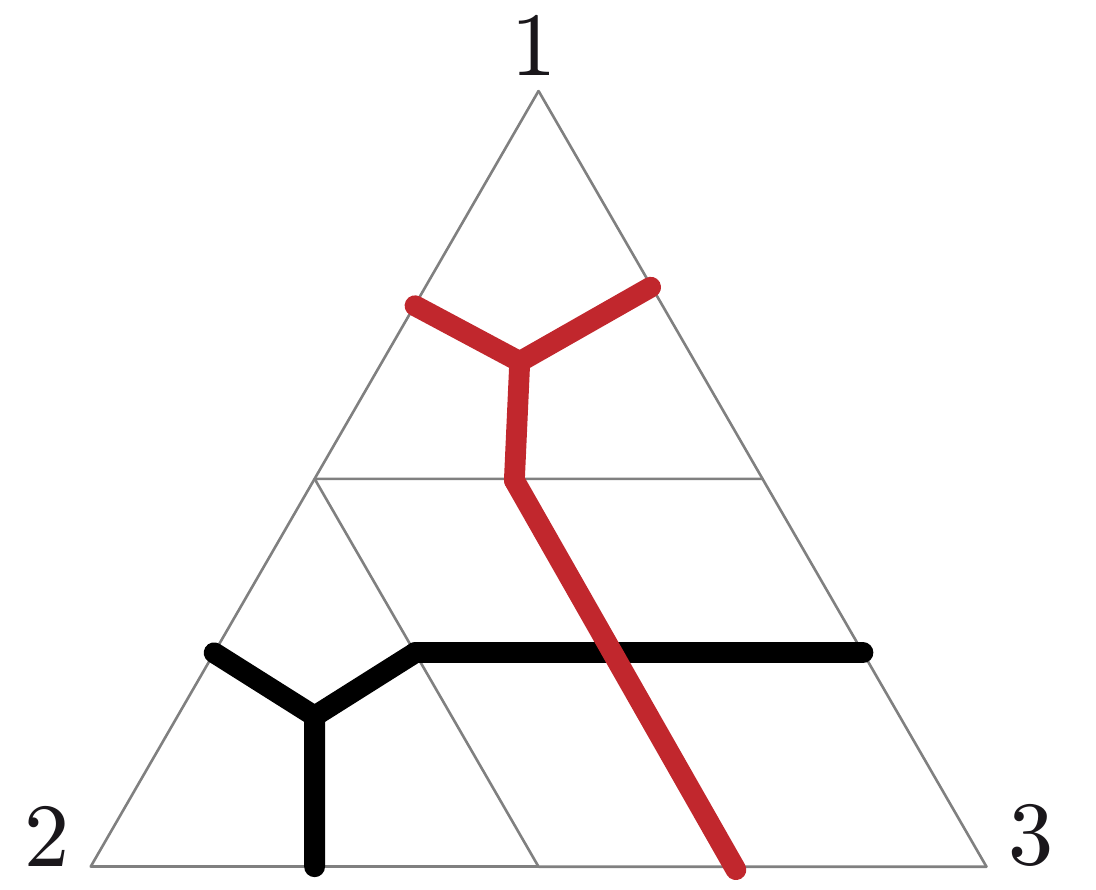}\qquad\qquad\includegraphics[height=3cm]{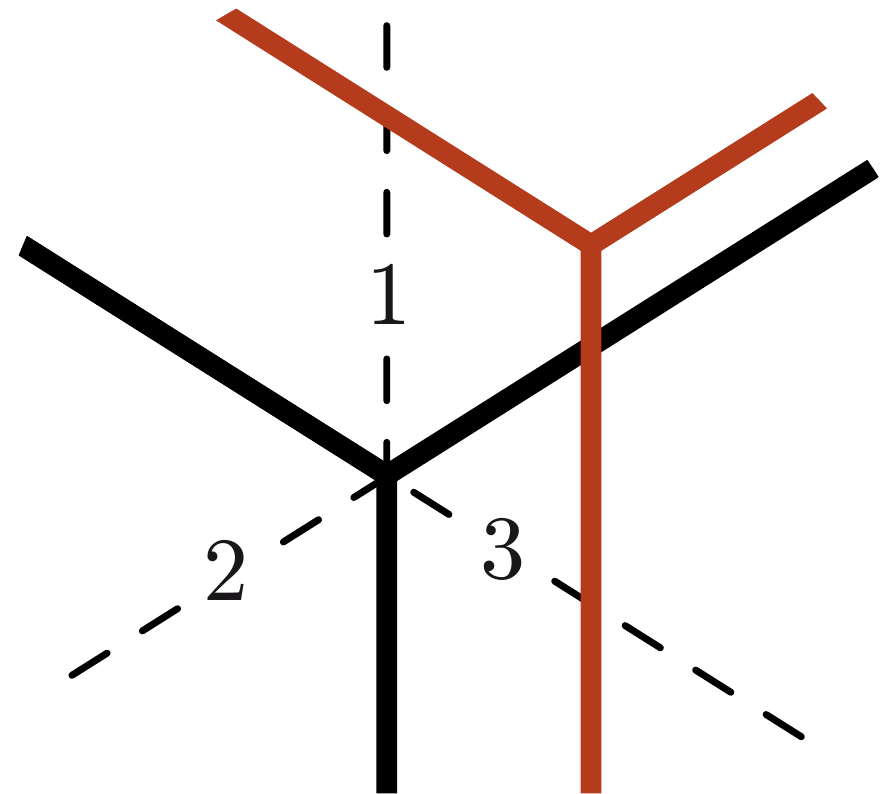}
\caption[The blow-up as an $n$-placing extension.]{The blow-up of the black \tphp with respect to $\pi=(2,3,1)$ yields a new \tphp with apex in the $(1,3,2)$-sector of the first \tphp.}
\label{fig:blow_up_perm}
\end{figure}
See Figure \ref{fig:blow_up_perm} for an illustration.

We can use blow-ups to construct an affine pseudohyperplane arrangement $\mc H$ for a given \tphp $H$. For each $(I,\pi)$ with $\pi\in\Sym_d$ and $\emptyset\neq I\subset [d]$ perform one blow-up of $H$ with respect to $\overline\pi$ and denote the \tphp emerging from this blow-up by $H^{I,\pi}$.

We then obtain $(2^d-2)d!$ new tropical hyperplanes (one for each $(I,\pi)$ and hence a \mixsd of $\dilsimp{((2^d-2)d!+1)}{d-1}$. With this we can, in the dual arrangement of \tphps, define $\mc H =H^{I,\pi}_I$.

See Figure \ref{fig:approx_blow_up} for an illustration.

\begin{figure}
\centering
\includegraphics[height=3cm]{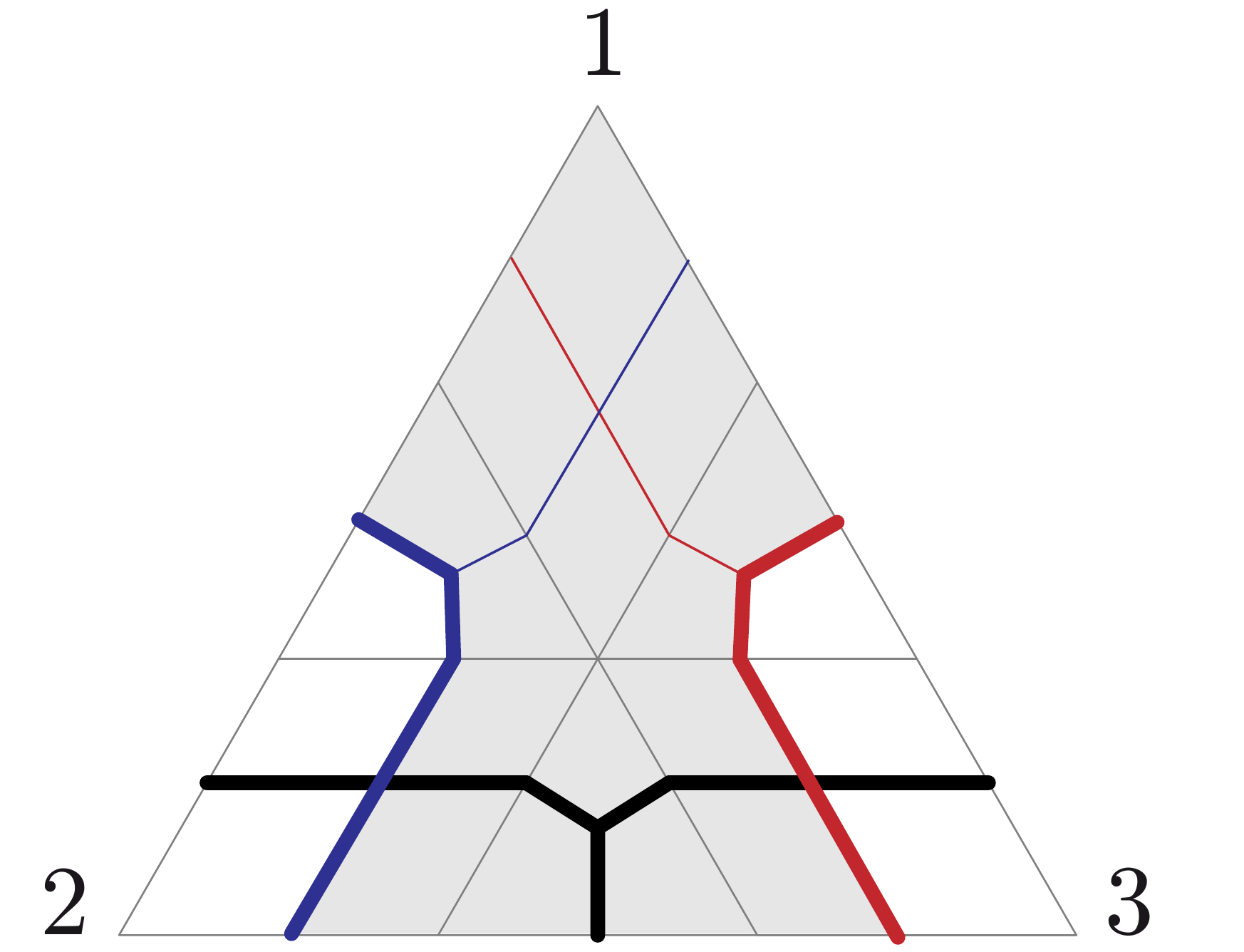}
\vspace{-2mm}
\caption{An approximating neighbourhood for $a=1,b=23$ as an intersection of affine pseudohalfspaces in a blow-up of the black \tphp.}
\label{fig:approx_blow_up}
\end{figure}

\begin{theorem}\label{thm:tphpa_elim}
The types in a \tphpa as in Definition \ref{def:tphpa2} satisfy the elimination axiom of a \tom.
\end{theorem}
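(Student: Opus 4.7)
The plan is to invoke Proposition \ref{prop:elim_iff_connected}, which reduces the elimination axiom for the types coming from a \tphpa $\mc A$ to showing that the combinatorial convex hull $S_{AB}$ is path-connected for any two types $A,B$ in the induced cell decomposition. Once this is established, a path from $A$ to $B$ inside $S_{AB}$ must, at any fixed position $j\in[n]$, traverse a cell whose $j$-th coordinate equals $A_j\cup B_j$, and that cell serves as the required elimination.

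So I would fix $A,B$ and consider the region $R_{AB}\subseteq\T^{d-1}$ consisting of all points whose type lies in $S_{AB}$; concretely, $R_{AB}$ is the union of all cells $C$ with $C_i\in\{A_i,B_i,A_i\cup B_i\}$ for every $i\in[n]$. Path-connectedness of $R_{AB}$ as a subspace of $\T^{d-1}$ is equivalent to path-connectedness of $S_{AB}$ as a subcomplex, so it suffices to prove the former.

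To analyse $R_{AB}$, I would apply the blow-up construction from Section \ref{sec:phpa_to_tom} to each \tphp $H_i\in\mc A$ individually. For each $i$, Lemma \ref{lem:aff_php_elim} yields an approximating neighbourhood of $T_{A_i}\cup T_{B_i}\cup T_{A_i\cup B_i}$ that is realised, in a suitable blow-up, as an intersection of affine pseudohalfspaces. Carrying out all these blow-ups simultaneously (and choosing the blow-up parameters small enough that the approximations remain faithful) produces a larger \tphpa $\mc A'$ in which an approximating neighbourhood $R'_{AB}$ of $R_{AB}$ is described as the intersection, over $i\in[n]$, of finitely many closed affine pseudohalfspaces of \tphps in $\mc A'$. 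By Definition \ref{def:tphpa2} the boundaries of these pseudohalfspaces form an arrangement of affine \phps in the sense of Definition \ref{def:affine_phpa}, so Proposition \ref{prop:intersection_of_aff_phss_connected} applies and $R'_{AB}$ is path-connected. Taking the parameters to $0$, the approximating region deformation-retracts onto $R_{AB}$, forcing $R_{AB}$ itself to be path-connected, and hence $S_{AB}$ is path-connected as well.

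The main obstacle, and the step that needs the most care, is the combined approximation/blow-up argument: one has to show that Lemma \ref{lem:aff_php_elim} can be applied to \emph{all} positions at once inside a single larger arrangement (so that a single halfspace system in $\mc A'$ captures the membership conditions $C_i\in\{A_i,B_i,A_i\cup B_i\}$ simultaneously for every $i$), and that the passage from the approximating region $R'_{AB}$ back to $R_{AB}$ preserves path-connectedness. Once this is done, everything else is just an assembly of Proposition \ref{prop:elim_iff_connected}, Proposition \ref{prop:intersection_of_aff_phss_connected} and Lemma \ref{lem:aff_php_elim}.
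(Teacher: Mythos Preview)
Your proposal is correct and follows essentially the same route as the paper: reduce via Proposition~\ref{prop:elim_iff_connected} to path-connectedness of $S_{AB}$, realise an approximating neighbourhood of $S_{AB}$ as an intersection of affine pseudohalfspaces in a blown-up arrangement using Lemma~\ref{lem:aff_php_elim}, apply Proposition~\ref{prop:intersection_of_aff_phss_connected}, and then pass back to $S_{AB}$ by a homotopy/retraction argument. The paper makes the last step explicit by writing down a linear shrinking homotopy on the Minkowski summands of the blown-up mixed subdivision, which is exactly the ``taking the parameters to $0$'' step you flag as needing care.
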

\begin{proof}
Let $A,B$ be types in a \tphpa $\mc T$.
By Proposition \ref{prop:elim_iff_connected} it suffices to show that $S_{AB}$ is connected.

By Lemma \ref{lem:aff_php_elim} we can approximate the set $S_{AB}=\{C\mid C_i\in \{A_i, B_i,A_i\cup B_i\}\}$ as an intersection $X=\bigcap H_i^+$ of pseudohalfspaces in an arrangement of affine pseudohyperplanes obtained by suitable blow-ups of $S$.
By Proposition \ref{prop:intersection_of_aff_phss_connected}, $X$ is connected.

Moreover, $S_{AB}$ is homotopic to $X$. To see this we shrink the new \tphps, that were added during the blow-ups. Denote by $S'$ the blow-up of $S$ and assume \twlog that the original $n$ \tphps have indices $1,\ldots,n$. Moreover, assume that $S'$ is a \mixsd of $\dilsimp N{d-1}$. Consider the following {homotopy}:
\[\begin{split}H:&~[0,1]\times S'\to S\\ 
&\left(\lambda, \sum_{i=1}^N C_i \right)\mapsto \sum_{i=1}^n C_i+ (1-\lambda)\sum_{i=n+1}^N C_i.
\end{split}\]
It is clear that $H$ is continuous.
Moreover, $H(X,0)=X$ and $H(X,1)=S_{AB}$ and hence $S_{AB}$ is homotopic to $X$.
\end{proof}

\subsection{Non-fine mixed subdivisions}\label{sec:elimination_nongen}

In this section we prove that arbitrary \dnmixsds satisfy the elimination property.

We can still construct approximating neighbourhoods by means of blowing up \tphps even if the \mixsd is not fine.

The following is clear from the above:

\begin{lemma}
Let $S$ be a (not necessarily fine) \dnmixsd. Then $\bigcup \{\mc H_i\}$ is an arrangement of affine \phps.
\end{lemma}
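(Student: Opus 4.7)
The plan is to reduce to the fine case by refinement. Since every mixed subdivision of $\dilsimp n{d-1}$ admits a refinement into a fine one (for instance by placing, as discussed in Section \ref{sec:mixsds}), first refine $S$ to a fine \mixsd $S'$. Observe that refinement leaves the underlying $n$ \tphps $H_1,\ldots,H_n$ unchanged as subsets of $\TP^{d-1}$; only the cell structures on them become finer. Hence the affine pseudohyperplanes $H_{i,I,p}$ obtained by blowing up $H_i$ at a point $p$ coincide for $S$ and for $S'$ as subsets of $\TP^{d-1}$, even though the combinatorial blow-up operations are defined differently in the fine and non-fine cases.

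Next, apply, for every $i\in[n]$ and every pair $(I,\pi)$ with $\emptyset\neq I\subset[d]$ and $\pi\in\Sym_d$, the blow-up of $H_i$ with respect to $\overline\pi$ inside $S'$. Performed simultaneously for all $i$, these blow-ups yield a fine \mixsd $\tilde S'$ of $\dilsimp N{d-1}$ (where $N=n+n(2^d-2)d!$) whose associated \tom is in general position, as in the construction preceding Figure \ref{fig:approx_blow_up}. By the Topological Representation Theorem \ref{thm:toprep2}, $\tilde S'$ is realised by a \tphpa $\tilde{\mc A}$ in the sense of Definition \ref{def:tphpa2}. By that definition, for any halfspace system, the corresponding family of halfspace boundaries is an arrangement of affine \phps.

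Now choose, for each blown-up \tphp $H_i^{I,\pi}$, the specific halfspace boundary $H^{I,\pi}_{i,I}$. The union of these boundaries over all $i,I,\pi$ is precisely $\bigcup\{\mc H_i\}$. Since they arise as a halfspace system of the \tphpa $\tilde{\mc A}$, the set $\bigcup\{\mc H_i\}$ is an arrangement of affine pseudohyperplanes as claimed.

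The main obstacle is justifying that the blow-up construction produces the same affine pseudohyperplanes $H^{I,\pi}_{i,I}$ regardless of whether we start from the (possibly non-fine) $S$ or from its fine refinement $S'$. This amounts to checking that the combinatorial blow-up of a non-fine \mixsd -- defined via the explicit cell description in Section \ref{sec:elim} -- places the new pseudohyperplane in the $\overline\pi$-sector of $H_i$ in a way that depends only on $H_i$ and the point $p_{I\pi}$, not on the cell structure of $S$ on the other $n-1$ pseudohyperplanes. Once this compatibility is established, the non-fine case follows directly from the fine case via Theorem \ref{thm:toprep2}.
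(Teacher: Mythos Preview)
Your approach differs from the paper's, and the difference matters. The paper does not refine $S$ at all. Instead, it blows up the (possibly non-fine) $S$ directly using the explicit cell description given at the end of Section~\ref{sec:mixsds} and the start of Section~\ref{sec:elim}, and then observes that \emph{deleting the $n$ original pseudohyperplanes} from the blown-up subdivision leaves an arrangement in general position (since every new pseudohyperplane arose from a placing extension $S_\pi$, which is fine). Theorem~\ref{thm:toprep2} then applies to this deleted arrangement, and Definition~\ref{def:tphpa2} yields that the chosen halfspace boundaries form an affine pseudohyperplane arrangement. No compatibility statement between $S$ and a refinement is ever needed.

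Your route, by contrast, rests entirely on what you yourself call ``the main obstacle'': that refining $S$ to $S'$ leaves the $n$ tropical pseudohyperplanes unchanged \emph{as subsets of $\TP^{d-1}$}, and hence that the blow-up pseudohyperplanes coincide for $S$ and $S'$. You state this but do not prove it, so the argument is incomplete as it stands. Worse, the claim is not obviously true: the Poincar\'e-dual construction of \cite[Theorem~4.2]{toprep1} involves choosing an interior point in each cell, so it is not canonical. A refinement $S'$ has strictly more cells than $S$, and its dual complex is a genuinely different geometric object; there is no reason the $i$-th pseudohyperplane of $S'$ should coincide pointwise with that of $S$. At best one expects them to be related by a PL-homeomorphism of $\TP^{d-1}$, but establishing that this homeomorphism carries the $\mc H_i$ of $S$ to those of $S'$ is exactly the work you have deferred. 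The paper's deletion trick sidesteps all of this.
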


With this we are now ready to prove the main result of this chapter:
\begin{theorem}\label{thm:elimination_nongen}
Every \dnmixsd satisfies the elimination property.
\end{theorem}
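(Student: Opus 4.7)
The plan is to mimic the proof of Theorem \ref{thm:tphpa_elim} almost verbatim, with the immediately preceding lemma doing the work of removing the general-position hypothesis. By Proposition \ref{prop:elim_iff_connected}, it suffices to show that for any two cells $A,B$ of a \dnmixsd $S$, the combinatorial convex hull $S_{AB}$ is path-connected; the elimination axiom then follows because, for any fixed $j\in[n]$, a path from $A$ to $B$ inside $S_{AB}$ must traverse a cell $C$ with $C_j=A_j\cup B_j$.

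To produce such a path, I would proceed as in the proof of Theorem \ref{thm:tphpa_elim}. Let $\mc A=(H_1,\ldots,H_n)$ be the family of \tphps dual to $S$. For each $i$, apply Lemma \ref{lem:aff_php_elim} (with $I=A_i$ and $J=B_i$) to exhibit an approximating neighbourhood of $T_{A_i}\cup T_{B_i}\cup T_{A_i\cup B_i}$ inside $H_i$ as an intersection of affine pseudohalfspaces obtained by suitable blow-ups of $H_i$ with respect to finitely many permutations. Intersect these neighbourhoods across all $i\in[n]$ to obtain a subset $X\subseteq\T^{d-1}$ that approximates $S_{AB}$. By the lemma immediately preceding Theorem \ref{thm:elimination_nongen}, the collection $\bigcup_i\mc H_i$ of boundaries produced by these blow-ups is an arrangement of affine pseudohyperplanes even though $S$ itself is not assumed fine; hence $X$ is an intersection of closed pseudohalfspaces in such an arrangement and is path-connected by Proposition \ref{prop:intersection_of_aff_phss_connected}.

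It remains to transfer the connectivity from $X$ to $S_{AB}$. For this I would use the same linear shrinking homotopy as at the end of the proof of Theorem \ref{thm:tphpa_elim}: the blow-up $S'$ of $S$ produced above is a \mixsd of some $\dilsimp N{d-1}$ (with $N\geq n$) whose first $n$ Minkowski summands correspond to the original \tphps; setting
\[
H\colon [0,1]\times S' \to S,\qquad
\Bigl(\lambda,\sum_{i=1}^N C_i\Bigr)\mapsto \sum_{i=1}^n C_i+(1-\lambda)\sum_{i=n+1}^N C_i,
\]
contracts the added pseudohyperplanes and realises $S_{AB}$ as a continuous image of $X$, so $S_{AB}$ inherits path-connectedness from $X$.

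The main obstacle is not the topological argument itself, which is essentially a rerun of Theorem \ref{thm:tphpa_elim}, but rather checking that the blow-up construction defined for non-fine \dnmixsds in Section \ref{sec:phpa_to_tom} really does satisfy the preceding lemma: i.e.\ that after performing enough blow-ups of every $H_i$ with respect to every relevant permutation, the resulting family of boundaries $\bigcup_i\mc H_i$ still forms an arrangement of affine pseudohyperplanes in the sense of Definition \ref{def:affine_phpa}. Once this technical point is in place, Lemma \ref{lem:aff_php_elim} applies verbatim and the proof goes through.
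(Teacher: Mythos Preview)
Your proposal is correct and follows the paper's approach essentially verbatim. The one missing ingredient---the justification of the ``main obstacle'' you flag---is supplied in the paper by observing that after deleting the $n$ original \tphps from the blow-up one is left with a \tphpa in general position, so Theorem~\ref{thm:toprep2} applies and $\bigcup_i\mc H_i$ is indeed an arrangement of affine pseudohyperplanes.
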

\begin{proof}

If we repeatedly blow-up $S$ with respect to any $(i,\pi,I)$ we obtain $n(2^d-2)d!$ new \tphps (one for each $(i,\pi,I)$) and hence a \mixsd of $\dilsimp{(n+n(2^d-2)d!)}{d-1}$, in which we find our $H_{I,p}$s. It remains to show that these again form an arrangement of affine pseudohyperplanes. But this follows since if we delete the $n$ original \tphps we obtain a \tphpa in general position.

From here on the proof works as for Theorem \ref{thm:tphpa_elim}:
\end{proof}

From this we immediately obtain the following corollaries:
\begin{corollary}[{\cite[Conjecture 5.1]{Ardila/Develin}}]
Tropical oriented matroids with parameters $(n,d)$ are in one-to-one correspondence with \dnmixsds and subdivisions of $\simpprod{n-1}{d-1}$.
\end{corollary}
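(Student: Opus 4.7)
The plan is to combine the freshly-established Theorem \ref{thm:elimination_nongen} with the results already collected in the introduction of the paper and surveyed in Figure \ref{fig:big_plan}. The Cayley trick \cite[Thm.\ 3.1]{HRS00} already provides a canonical bijection between \dnmixsds and subdivisions of $\simpprod{n-1}{d-1}$, so it suffices to set up a bijection between \toms with parameters $(n,d)$ and \dnmixsds; the triangulation statement then follows for free.

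For the forward direction I would use \cite[Theorem 6.3]{Ardila/Develin}, which sends a \tom $M$ with parameters $(n,d)$ to the \dnmixsd $S(M)$ whose cells are labelled by the types of $M$. For injectivity I would invoke \cite[Proposition 3.1]{toprep1}: a \dnmixsd is determined by its topes, hence the collection of types of $M$ can be recovered from $S(M)$, and the map $M\mapsto S(M)$ is injective.

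The real content is surjectivity, \ie showing that every \dnmixsd $S$ actually arises this way. For this I would let $M(S)$ denote the collection of $(n,d)$-types labelling the cells of $S$ and verify the four axioms of Definition \ref{def:trop_or_mat}. The boundary, comparability, and surrounding axioms for $M(S)$ are proven in \cite[Proposition 6.4]{Ardila/Develin} without any fineness assumption, so those come for free. The only missing piece is the elimination axiom, which is exactly the statement of Theorem \ref{thm:elimination_nongen} just established in the preceding subsection. Thus $M(S)$ is a \tom, and by construction $S(M(S))=S$, giving surjectivity.

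I expect no further obstacle: the proof is essentially an application of the structural results of Section \ref{sec:elim} together with the (standard) Cayley trick. The main conceptual hurdle has already been overcome in proving Theorem \ref{thm:elimination_nongen}; this corollary is the packaging that converts elimination for arbitrary \dnmixsds into the bijective correspondence conjectured in \cite[Conjecture 5.1]{Ardila/Develin}.
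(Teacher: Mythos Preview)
Your proposal is correct and matches the paper's approach exactly: the paper gives no explicit proof at all, simply writing ``From this we immediately obtain the following corollaries'', so you have spelled out precisely the implicit argument (Cayley trick, \cite[Theorem 6.3]{Ardila/Develin} for one direction, \cite[Proposition 6.4]{Ardila/Develin} plus Theorem \ref{thm:elimination_nongen} for the other). One small remark: the invocation of \cite[Proposition 3.1]{toprep1} for injectivity is unnecessary --- since the types of $M$ are by construction exactly the cells of $S(M)$, the map $M\mapsto S(M)$ is trivially injective without appealing to the tope-determination result.
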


This completes the proof of the equivalence of the five concepts of \toms, \tphpas I (\cite[Definition 4.3]{toprep1}) and II (Definition \ref{def:tphpa2}), \dnmixsds and subdivisions of $\simpprod{n-1}{d-1}$ depicted in Figure \ref{fig:big_plan}.

\bigskip
Moreover, the duality relation between \dnmixsds and $\dilsimp d{n-1}$ implies that the dual of a \tom is itself a \tom.
\begin{corollary}[{\cite[Conjecture 5.5]{Ardila/Develin}}]\label{cor:dual_tom}
The dual \index{dual!of a tropical oriented matroid} of a \tom with parameters $(n,d)$ is a \tom with parameters $(d,n)$.
\end{corollary}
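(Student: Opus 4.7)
The plan is to derive the corollary from the main bijection between \toms and mixed subdivisions together with the symmetry of the product of simplices. By Theorem \ref{thm:mainresult} (established as the immediately preceding corollary), a \tom $M$ with parameters $(n,d)$ corresponds bijectively to a \mixsd $S$ of $\dilsimp n{d-1}$ and, via the Cayley Trick, to a polytopal subdivision $T$ of $\simpprod{n-1}{d-1}$. The vertices of $T$ are labelled by edges of $K_{n,d}$, and the type of a cell $C \in T$ is encoded by the subgraph of $K_{n,d}$ whose edges are the labels of the vertices of $C$.

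The key observation is that the polytope $\simpprod{n-1}{d-1}$ is canonically isomorphic to $\simpprod{d-1}{n-1}$ by swapping the two factors, and under this isomorphism $K_{n,d}$ becomes $K_{d,n}$ by exchanging the two colour classes of vertices. Thus the same subdivision $T$ can be read as a subdivision of $\simpprod{d-1}{n-1}$, and the type of each cell (a subgraph of $K_{n,d}$) is turned into the transpose subgraph (viewed in $K_{d,n}$), which is precisely the type operation $A \mapsto \transpose A$ from Definition \ref{def:tom_dual}. Applying the Cayley Trick on this other side produces a \mixsd $\transpose S$ of $\dilsimp d{n-1}$, and by Theorem \ref{thm:mainresult} in the direction $(d,n)$ this \mixsd corresponds to a \tom with parameters $(d,n)$.

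It remains to verify that the set of $(d,n)$-types appearing in this \mixsd $\transpose S$ agrees with $\transpose M$ as defined in Definition \ref{def:tom_dual}. Bounded types of $M$ correspond to cells of $S$ dual to interior cells of $\simpprod{n-1}{d-1}$, and their transposes account for the cells of the dual \mixsd. The vertices (\ie $0$-dimensional cells of $S$) are sufficient to determine the whole \mixsd by \cite[Proposition 3.1]{toprep1}, and the remaining cells of $\transpose S$ are recovered by taking refinements with respect to ordered partitions of $[n]$ — which is exactly what is done in Definition \ref{def:tom_dual}. The surrounding axiom then ensures that these refinements really yield every type in the dual \tom.

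The main obstacle I anticipate is precisely this last identification: checking that the somewhat asymmetric recipe of Definition \ref{def:tom_dual} (namely, transpose only the vertices of $M$ and then close under refinement by ordered partitions of $[n]$) produces the same set of types as the dual \mixsd of $\dilsimp d{n-1}$. This amounts to a careful bookkeeping argument, translating between the vertex/refinement description of a \tom on the one hand and the Minkowski-cell description of a \mixsd on the other, and using that unbounded types can be ignored because they correspond to boundary cells which are controlled by the boundary axiom. Once this identification is in place, the corollary follows immediately.
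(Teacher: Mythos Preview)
Your approach is exactly the one the paper takes: deduce the corollary from the bijection of Theorem \ref{thm:mainresult} together with the evident symmetry $\simpprod{n-1}{d-1}\cong\simpprod{d-1}{n-1}$, which exchanges the roles of $n$ and $d$ and hence transposes all type graphs. In fact, the paper offers only a one-line justification (``the duality relation between \dnmixsds and $\dilsimp d{n-1}$ implies that the dual of a \tom is itself a \tom''), so your more careful discussion --- including the bookkeeping concern you flag about matching Definition \ref{def:tom_dual} with the transposed \mixsd --- already goes beyond what the paper itself supplies.
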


\nocite{polymake}
\nocite{mydiss}
\printbibliography
\end{document}